\definecolor{dg}{rgb}{0.1,0.4,0.1}
\renewcommand{\labelenumi}{ $(\arabic{enumi})$ }
\newtheorem{theorem}{Theorem}[section]
\newtheorem{lemma}[theorem]{Lemma}
\newtheorem{proposition}[theorem]{Proposition}
\newtheorem{corollary}[theorem]{Corollary}
\newtheorem{claim}[theorem]{Claim}
\newtheorem{conjecture}[theorem]{Conjecture}
\newtheorem{remark}[theorem]{Remark}
\newtheorem{question}[theorem]{Question}
\theoremstyle{definition}
\newtheorem{definition}[theorem]{Definition}
\newtheorem{example}[theorem]{Example}
\newtheorem{problem}[theorem]{Problem}
\newtheorem*{thm_persistent_[G,G]_noncyclic}{Theorem~\ref{persistent_[G,G]_noncyclic}}
\newtheorem*{thm_same_trivializing_set}{Theorem~\ref{same_trivializing_set}}
\numberwithin{equation}{section}
\numberwithin{figure}{section}
\numberwithin{table}{section}
\renewcommand{\(}{\textup{(}}
\renewcommand{\)}{\textup{)}}
\begin{document}

%%%%%%%%%%%%
\title[Group theoretic perspective on Dehn fillings]{Group theoretic perspective on Dehn fillings: 
Property P conjecture and beyond}

\author[T.Ito]{Tetsuya Ito}
\address{Department of Mathematics, Kyoto University, Kyoto 606-8502, JAPAN}
\email{tetitoh@math.kyoto-u.ac.jp}
\thanks{The first named author has been partially supported by JSPS KAKENHI Grant Number 19K03490, 21H04428, 23K03110.}

\author[K. Motegi]{Kimihiko Motegi}
\address{Department of Mathematics, Nihon University, 
3-25-40 Sakurajosui, Setagaya-ku, 
Tokyo 156--8550, Japan}
\email{motegi.kimihiko@nihon-u.ac.jp}
\thanks{The second named author has been partially supported by JSPS KAKENHI Grant Number 25K07018, 21H04428, 23K03110, 23K20791.}

\author[M. Teragaito]{Masakazu Teragaito}
\address{Department of Mathematics Education, Hiroshima University, 
1-1-1 Kagamiyama, Higashi-Hiroshima, 739--8524, Japan}
\email{teragai@hiroshima-u.ac.jp}
\thanks{The third named author has been partially supported by JSPS KAKENHI Grant Number 20K03587, 25K07004.}

\subjclass[2020]{Primary: 57K10,Secondary: 57K30, 57M05, 57M07, 20F65}
\keywords{Dehn filling, knot group, Property P, peripheral Magnus property, Dehn filling trivialization, residual finiteness}
\dedicatory{}

\begin{abstract}    
The Property P Conjecture, which was settled by Kronheimer and Mrowka, asserts that every 3--manifold obtained by non-trivial Dehn surgery on a non-trivial knot is never simply connected. 
We propose new perspectives in studying Dehn filling from group theoretic point of view, which stem from several variation of the Property P conjecture.
\end{abstract}

\maketitle

%%%%%%%%%%%%%%%%%%%%%%%%%%%%%

\section{Introduction}
\label{Introduction}
Poincar\'e had initially conjectured that any closed $3$--manifold with trivial homology is homeomorphic to the $3$--sphere $S^3$ \cite{Poincare_homology_IMT}. 
However, in 1904 Poincar\'e constructed a remarkable $3$--manifold which has a trivial homology, but whose fundamental group is non-trivial. 
This $3$--manifold is now called the Poincar\'e homology $3$--sphere, 
and this example made him to update the above conjecture to the {\it real} Poincar\'e conjecture which asserts that any closed $3$--manifold with trivial fundamental group is homeomorphic to $S^3$. 

In 1910, in his celebrated paper \cite{Dehn_IMT} Dehn introduced, so called \textit{Dehn surgery} and made a foundation of infinite group theory. 
Using Dehn surgery he constructed infinitely many homology $3$--spheres; 
it should be noted that before Dehn's work, only Poincar\'e's example (Poincar\'e homology $3$--sphere) was known. 

We may imagine that there were some efforts to find a counterexample to the Poincar\'e conjecture using Dehn surgery. 
Later Bing and Martin \cite{BingMartin_IMT} proposed the Property P conjecture which asserts that 
every non-trivial Dehn surgery on a non-trivial knot never produces a simply connected $3$--manifold. 
Since then the Property P conjecture had been playing a leading position in $3$--dimensional topology, 
and finally Kronheimer and Mrowka \cite{KM_IMT} settled the conjecture affirmatively using gauge theory and symplectic topology. 

In this article we discuss new perspectives of a study of Dehn fillings from group theoretic viewpoint 
motivated by Property P. 

\section{Dehn filling and Property P conjecture}
\label{Dehn filling}

Let $K$ be a non-trivial knot in $S^3$ with the exterior $E(K)$. 
Dehn filling on $E(K)$ is a geometric procedure to produce closed $3$--manifolds $K(r)$ 
by attaching (filling) a solid torus to $E(K)$. 
There are infinitely many ways to attach the solid torus to $E(K)$. 
Each attachment can be specified by using ``slopes'' on $\partial E(K)$. 
 
By the loop theorem the inclusion map 
$i\colon \partial E(K) \to E(K)$ induces a monomorphism 
$i_* \colon \pi_1(\partial E(K)) \to \pi_1(E(K))$, 
thence we have a subgroup $i_*(\pi_1(\partial E(K))) \subset \pi_1(E(K))$. 
We denote $\pi_1(E(K))$ by $G(K)$, which is called the knot group of $K$, 
and denote $i_*(\pi_1(\partial E(K)))$ by $P(K)$, which is called the peripheral subgroup of $K$. 
A \textit{slope element} in $G(K)$ is a primitive element $\gamma$ in $P(K) \cong \mathbb{Z} \oplus \mathbb{Z}$, 
which is represented by an oriented simple closed curve in $\partial E(K)$. 
Let us denote the normal closure of $\gamma$ in $G(K)$ by $\langle\!\langle \gamma \rangle\!\rangle$.   
Using the standard meridian-longitude pair $(\mu, \lambda)$ of $K$,  
each slope element $\gamma$ is expressed as $\mu^p \lambda^q$ for some relatively prime integers $p, q$. 
As usual we use the term \textit{slope} to mean the isotopy class of an unoriented simple closed curve in $\partial E(K)$. 
Two slope elements $\gamma$ and its inverse $\gamma^{-1}$ represent the same slope (by forgetting their orientations), 
which is identified with $p/q \in \mathbb{Q} \cup \{\infty\, (=1/0)\}$, 
where the meridian corresponds to $1/0$. 
Since $\langle\!\langle \gamma \rangle\!\rangle = \langle\!\langle \gamma^{-1} \rangle\!\rangle$, 
we may denote them by $\langle\!\langle p/q \rangle\!\rangle$. 
Thus each slope defines the normal subgroup $\langle\!\langle p/q \rangle\!\rangle \subset G(K)$, 
which will be referred to as the \textit{normal closure of the slope $p/q$} for simplicity. 

Attach a solid torus $S^1 \times D^2$ to $E(K)$ along their boundaries so that 
$\{ * \} \times \partial D^2$ represents a slope $r$ to obtain a closed $3$--manifold $K(r)$. 
We call $K(r)$ the 3-manifold obtained by \textit{$r$--Dehn filling} on $E(K)$. 
($K(r)$ is also called  the 3-manifold obtained by \textit{$r$--Dehn surgery} on $K$.) 

\begin{figure}[htb]
\centering
\includegraphics[width=0.7\linewidth]{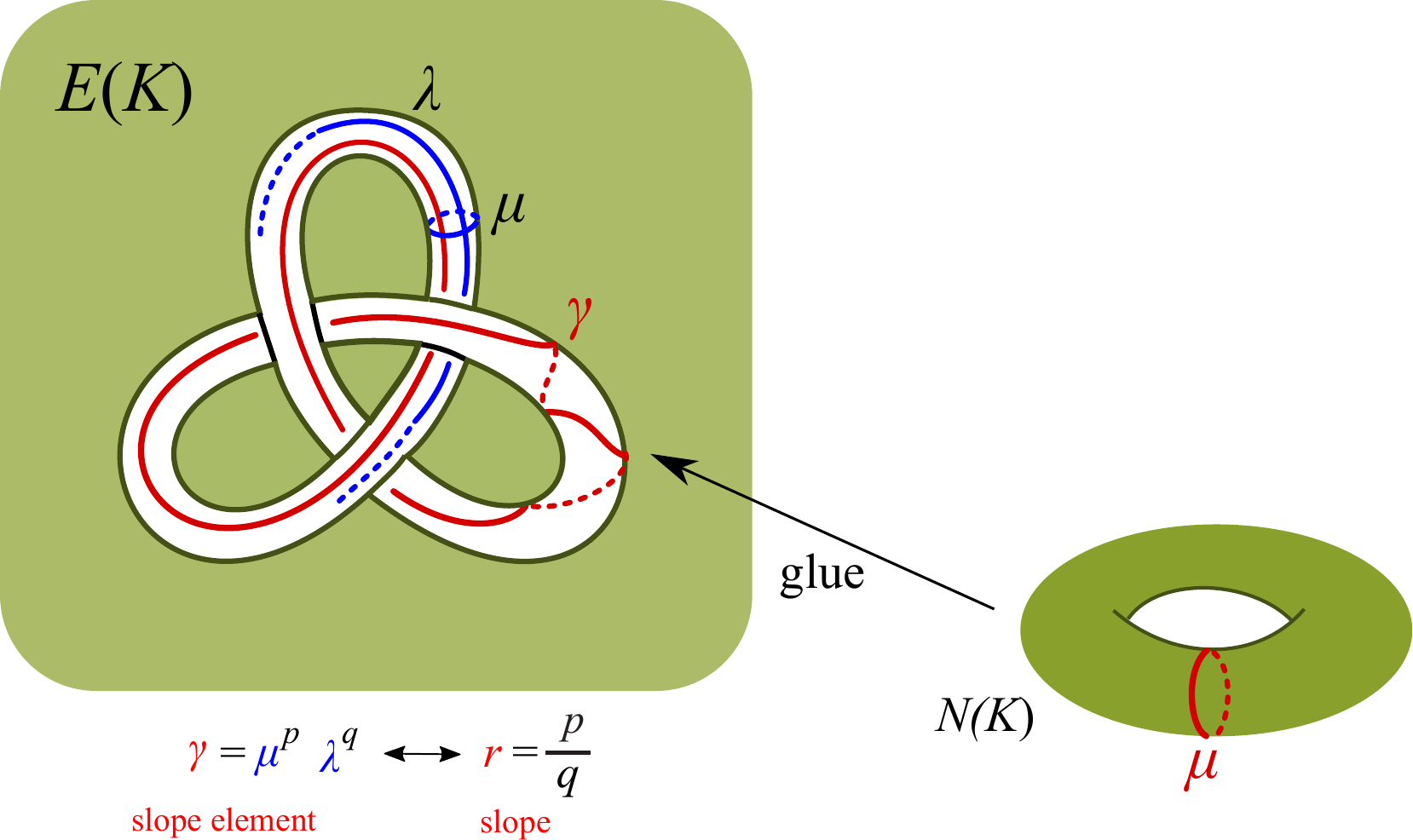}
\caption{Dehn filling} 
\label{filling}
\end{figure}

Then van Kampen's theorem tells us that
$\pi_1(K(p/q)) = G(K)/ \langle\!\langle \mu^p \lambda^q \rangle\!\rangle =  G(K)/\langle\!\langle p/q \rangle\!\rangle = G(K)/ \langle\!\langle r \rangle\!\rangle$, 
and we obtain a natural epimorphism
\[
p_r \colon G(K) \to G(K)/ \langle\!\langle r \rangle\!\rangle = \pi_1(K(r)), 
\]
which we call \textit{Dehn filling epimorphism}, or more specifically \textit{$r$--Dehn filling epimorphism}.  
The Dehn filling epimorphism gives us the following short exact sequence which relates $G(K),\ \langle\!\langle r \rangle\!\rangle$ and 
$\pi_1(K(r))$;
\[
1 \rightarrow \langle\!\langle r \rangle\!\rangle \rightarrow G(K) \xrightarrow{p_r} 
G(K) / \langle\!\langle r \rangle\!\rangle = \pi_1(K(r)) \rightarrow 1.
\]

Many people had an interest if one can obtain a homotopy $3$-sphere other than the $3$--sphere by Dehn surgery on knots. 
On the the hand, many knots turned out to have no non-trivial surgery yielding simply connected $3$--manifold, and  
Bing and Martin \cite{BingMartin_IMT} introduced the Property P. 

\medskip

\noindent
\textbf{Property P.}\quad 
{\it Every non-trivial Dehn filling of $E(K)$ yields a non-simply connected $3$--manifold.}

\medskip

Then they proposed the Property P conjecture below. 

\medskip

\begin{conjecture}[Property P conjecture]
Every non-trivial knot satisfies the Property P, 
i.e.
for every non-trivial knot $K$,  $\pi_1(K(r)) \ne 1$ for any $r \in \mathbb{Q}$.
\end{conjecture}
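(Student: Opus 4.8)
The plan is to follow the strategy of Kronheimer and Mrowka \cite{KM_IMT}, turning Property P into a non-vanishing theorem in gauge theory. The first step is to cut down the slopes that need to be considered. Since $H_1(K(p/q);\mathbb{Z}) \cong \mathbb{Z}/p\mathbb{Z}$, any filling with $|p| \neq 1$ already has non-trivial abelianization, hence non-trivial $\pi_1$. A simply connected manifold is in particular a homology sphere, so the only slopes that could possibly violate Property P are those with $p = \pm 1$, namely the slopes $1/q$ and $-1/q$ with $q \neq 0$. Passing to the mirror image $\overline{K}$ (which is again non-trivial, and for which $\pi_1(\overline{K}(1/q)) = \pi_1(K(-1/q))$) reduces the negative slopes to the positive ones, so it suffices to prove $\pi_1(K(1/n)) \neq 1$ for every non-trivial $K$ and every non-zero integer $n$.

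Next I would extract geometry from the longitudinal filling $K(0)$. Because $K$ is non-trivial it bounds a Seifert surface of genus $\geq 1$, and by Gabai's theorem this surface caps off to a leaf of a taut foliation on $K(0)$. By the perturbation theorem of Eliashberg and Thurston, a taut foliation deforms to a weakly symplectically fillable contact structure, so $K(0)$ bounds a symplectic $4$--manifold. Capping this filling off by the symplectic cap constructions for contact boundaries produces a closed symplectic $4$--manifold $X$, which one arranges to have $b^+(X) > 1$, containing $K(0)$ as a suitable separating hypersurface.

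The gauge-theoretic heart is to convert the symplectic structure into a non-vanishing Floer invariant. By Taubes' theorem the Seiberg--Witten invariant of the closed symplectic manifold $X$ is non-zero; by Witten's conjecture, established in the relevant cases by Feehan and Leness, this forces the Donaldson invariants of $X$ to be non-zero as well. Through the gluing and TQFT structure of Donaldson theory, non-triviality of these invariants translates into non-vanishing of the instanton Floer homology $I_*(K(0))$, computed with the admissible $SO(3)$ bundle determined by the homology class of the capped surface.

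Finally I would feed the non-vanishing of $I_*(K(0))$ into the instanton surgery exact triangle. The base case uses the pairwise distance-one triad of slopes $\{\infty, 0, 1\}$, which relates $I_*(S^3)$, $I_*(K(0))$ and $I_*(K(1))$; since $S^3$ carries no irreducible flat $SU(2)$ connection its instanton homology vanishes, so exactness propagates the non-vanishing from $K(0)$ to $K(1)$. A homotopy $3$--sphere has trivial fundamental group and hence no irreducible $SU(2)$ representation, which would force its instanton homology to vanish; thus $\pi_1(K(1)) \neq 1$. Running the triangle inductively along the pairwise distance-one triads $\{0, 1/n, 1/(n-1)\}$ then shows $I_*(K(1/n)) \neq 0$, hence $\pi_1(K(1/n)) \neq 1$, for every non-zero integer $n$, which by the first reduction is precisely Property P. The main obstacle is the middle step: building the bridge from the symplectic (hence Seiberg--Witten) side back to instanton theory, which alone detects $SU(2)$ representations and therefore $\pi_1$. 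It is here, in combining Taubes' non-vanishing, the Feehan--Leness proof of Witten's conjecture, and the functoriality of instanton Floer homology, rather than in the topological reductions, that the genuine difficulty of Property P resides.
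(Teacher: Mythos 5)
The first thing to say is that the paper does not prove this statement at all: it is recorded as the Property P Conjecture and its resolution is attributed to Kronheimer and Mrowka \cite{KM_IMT}. What the paper does supply (Section 3) is only the standard reduction: $H_1(K(p/q))\cong\mathbb{Z}_{|p|}$ together with the Cyclic Surgery Theorem \cite{CGLS_IMT} and Moser's result for torus knots \cite{Moser_IMT} show that $\pi_1(K(p/q))=1$ is possible only for $p/q=\pm1$, and passing to the mirror image reduces everything to showing $\pi_1(K(1))\neq 1$. Your proposal is a reasonable outline of the actual Kronheimer--Mrowka argument (Gabai's taut foliation on $K(0)$, Eliashberg--Thurston perturbation, symplectic capping, Taubes' non-vanishing, Feehan--Leness's cases of Witten's conjecture, and the Floer exact triangle for the triad $\{\infty,0,1\}$ with $I_*(S^3)=0$), so up to the slope $r=1$ your sketch matches the proof the paper is citing.

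There is, however, a concrete gap in how you dispose of the remaining slopes. You deliberately avoid the Cyclic Surgery Theorem, reducing only to $|p|=1$, and then propose to handle $1/n$ for all $n$ by ``running the triangle inductively along the triads $\{0,1/n,1/(n-1)\}$.'' Exactness alone does not propagate non-vanishing here: in a long exact sequence $\cdots\to A\to B\to C\to A\to\cdots$ with $A=I_*(K(1/(n-1)))$ and $B=I_*(K(0))$ both non-zero, the third term $C=I_*(K(1/n))$ can perfectly well vanish (the map $A\to B$ could be an isomorphism), and indeed the hypothesis that $K(1/n)$ is a homotopy sphere, which forces $C=0$, produces no contradiction from the triangle by itself. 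The base case $\{\infty,0,1\}$ works only because one corner, $I_*(S^3)$, is known to vanish; no corner of $\{0,1/n,1/(n-1)\}$ is. Extending the non-vanishing to $I_*(K(1/n))$ for $n\ge 2$ requires genuinely more input (this is the content of Kronheimer--Mrowka's later work on $SU(2)$ representations and Dehn surgery, not of the Property P paper). The correct and much cheaper fix is the one the paper records: invoke Theorem~\ref{cyclic_surgery} and \cite{Moser_IMT} to eliminate all slopes with $|q|\ge 2$ before any gauge theory is needed, so that only $r=1$ remains.
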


Note that the trivial knot does not satisfy the Property P. 
Indeed, $1/n$--Dehn filling of its exterior yields $S^3$. 

\medskip

\section{Brief history before the solution to the Property P conjecture}
The Property P conjecture provided driving motivation in Dehn surgery theory, 
and there are many research about this conjecture. 
We recall some distinguished results related to the Property P conjecture. 

The Cyclic Surgery Theorem due to Culler, Gordon, Luecke and Shalen \cite{CGLS_IMT} is one of the outstanding ones, 
which does hold in a more general setting, but here we restrict our attention to Dehn surgery on knots in $S^3$. 

\medskip

\begin{theorem}[Cyclic Surgery Theorem \cite{CGLS_IMT}]
\label{cyclic_surgery}
Let $K$ be a knot in $S^3$ other than a torus knot. 
Assume that both $K(p_1/q_1)$ and $K(p_2/q_2)$ have cyclic fundamental groups. 
Then the distance $\Delta(p_1/q_1, p_2/q_2)$ is less than or equal to one. 
Hence, there are at most three such slopes. 
\end{theorem}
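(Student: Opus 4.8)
The plan is to follow the character-variety method of Culler and Shalen. First I would reduce to the case that $E(K)$ is not Seifert fibered: a knot in $S^3$ with Seifert fibered exterior is precisely a torus knot, which is excluded by hypothesis, and this is exactly the case where the argument below degenerates. For the remaining knots I would work with the $SL_2(\mathbb{C})$ character variety $X(G(K))$ and single out a one-dimensional component $X_0$ carrying an irreducible character (for hyperbolic $K$ the component of a discrete faithful representation serves; in general such a curve is produced from the existence of an irreducible representation together with a dimension count). On the smooth projective model $\tilde X_0$ one attaches to each slope element $\gamma=\mu^p\lambda^q$ the trace function $I_\gamma$ and its square $f_\gamma=I_\gamma^2-4$, and sets $\|\gamma\|=\deg(f_\gamma|_{\tilde X_0})$. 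This definition extends by homogeneity to a function on $H_1(\partial E(K);\mathbb{R})\cong\mathbb{R}^2$.

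Second, I would use Culler--Shalen theory to read off the geometry of $\|\cdot\|$. Each ideal point $x$ of $\tilde X_0$ produces, via an action of $G(K)$ on a Bass--Serre tree, an essential surface in $E(K)$ with a well-defined boundary slope $b_x$, and the order of the pole of $f_\gamma$ at $x$ is $\Pi_x\,\Delta(\gamma,b_x)$ for a positive integer $\Pi_x$. Summing over the finitely many ideal points yields the fundamental formula $\|\gamma\|=\sum_x \Pi_x\,\Delta(\gamma,b_x)$, from which one sees that $\|\cdot\|$ is a genuine norm on $\mathbb{R}^2$ whose unit ball is a convex polygon, symmetric about the origin, with vertices governed by the finitely many boundary slopes $b_x$. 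Write $s=\min\{\|\gamma\|:\gamma\text{ a slope}\}$ for the minimal norm.

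Third --- and this is the topological heart --- I would establish the cyclic surgery lemma: if $\pi_1(K(\gamma))$ is cyclic, then $\|\gamma\|=s$. The mechanism is that a zero of $f_\gamma$ at an irreducible character of $X_0$ would yield $\rho(\gamma)=\pm I$, hence an irreducible $SL_2(\mathbb{C})$ (or $PSL_2$) representation of the filled group $\pi_1(K(\gamma))=G(K)/\langle\!\langle\gamma\rangle\!\rangle$; but a cyclic group is abelian and admits no such representation. Thus every zero of $f_\gamma$ on $\tilde X_0$ is confined to the fixed finite set of reducible characters (and parabolic points), so $f_\gamma$ carries no ``extra'' zeros, and a degree comparison forces $\|\gamma\|$ down to the minimal value $s$. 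Consequently, any two cyclic-filling slopes $\gamma_1,\gamma_2$ both lie on $\partial B_s$, the boundary of the scaled ball $B_s=\{x:\|x\|\le s\}$.

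Finally I would run the convex-geometry endgame. Minimality of $s$ means $B_s$ contains no nonzero lattice point in its interior, while $\gamma_1,\gamma_2$ are lattice points on $\partial B_s$; a Minkowski-type area estimate for this symmetric convex body then forces $\Delta(\gamma_1,\gamma_2)=|\,p_1q_2-p_2q_1\,|\le 1$. The closing count is pure number theory: primitive classes pairwise at distance at most one are Farey neighbours, and one checks directly that at most three of them (for instance $1/0$, $0/1$, $1/1$ after a change of basis) can coexist. I expect the main obstacle to be twofold: proving the cyclic surgery lemma rigorously, since one must control the reducible characters and the precise vanishing of $f_\gamma$ along the filling locus; and treating knot exteriors that contain closed essential surfaces, where the Culler--Shalen seminorm can a priori degenerate and the norm argument must be supplemented by a separate bound on the number of boundary slopes.
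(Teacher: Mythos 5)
A point of context first: the paper does not prove this statement. It is the Cyclic Surgery Theorem of Culler, Gordon, Luecke and Shalen, quoted with a citation to \cite{CGLS_IMT}, so there is no in-paper argument to compare against. Your outline does follow the architecture of the original proof (Culler--Shalen norm on $H_1(\partial E(K);\mathbb{R})$, ideal points producing essential surfaces, the ``no irreducible representation of a cyclic quotient'' mechanism, lattice geometry of the norm ball), but as a proof it has two genuine gaps beyond being a sketch.

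The first is the endgame. Minimality of $s$ plus Minkowski's theorem gives $\mathrm{area}(B_s)\le 4$, and the parallelogram with vertices $\pm\gamma_1,\pm\gamma_2$ has area $2\Delta(\gamma_1,\gamma_2)$ and lies in $B_s$ by convexity; this yields only $\Delta\le 2$, not $\Delta\le 1$. Ruling out $\Delta=2$ is a substantial further step: equality forces $B_s$ to equal that parallelogram, so $\gamma_1,\gamma_2$ are vertices of the norm ball and hence proportional to boundary classes of essential surfaces detected at ideal points, and one must then prove separately that a cyclic filling slope cannot be such a boundary slope (Theorem 2.0.3 of \cite{CGLS_IMT}). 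The second gap is the initial reduction. For a satellite (toroidal) knot exterior the construction in general yields only a \emph{seminorm} --- closed essential surfaces can make $f_\gamma$ constant on the curve --- and the original proof does not push the character-variety argument through this case; it treats toroidal exteriors by a separate topological analysis of how the essential torus meets the surgery solid torus, reducing to the torus-knot and cable cases. You flag both difficulties in your closing sentence, which is to your credit, but each is a chapter-length component of the actual proof rather than a loose end. (A smaller issue: a zero of $f_\gamma$ at an irreducible character gives $\mathrm{tr}\,\rho(\gamma)=\pm 2$, so $\rho(\gamma)$ may be parabolic rather than $\pm I$ and the representation need not factor through $\pi_1(K(\gamma))$; these characters must be controlled before the degree comparison closes.)
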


In this theorem, the distance $\Delta(p_1/q_1, p_2/q_2)$ (introduced by \cite{G-Li_IMT}) is the minimal geometric intersection number between two slopes $p_1/q_1$ and $p_2/q_2$, 
which is given by $|p_1q_2 - p_2 q_1|$. 
Note that $\Delta$ does not satisfy the triangle inequality and it is not a usual distance function, 
but to describe exceptional slopes the notion of distance $\Delta$ is convenient and very useful. 
Since $K(1/0) = S^3$ has the trivial, and hence cyclic fundamental group, 
as a consequence of this theorem we have the following: 

\smallskip

$\bullet$ If $K$ is not a torus knot, then $K(p/q)$ has a cyclic fundamental group only when $|q| = 1$, i.e. the surgery must be integral. 

\smallskip

For torus knots the Property P conjecture is known to be true \cite{Moser_IMT}. 
Thus noting $H_1(K(p/q)) \cong \mathbb{Z}_{|p|}$, 
the Cyclic Surgery Theorem implies that 

\smallskip

$\bullet$ $K(p/q)$ is simply connected only when $p/q = \pm 1$. 

\medskip

Replacing the condition ``$\pi_1(K(r)) = \{ 1 \}$'' with the condition ``$K(r) = S^3$'', 
Gordon and Luecke proved the following result. 

\medskip

\begin{theorem}[\cite{GLu_S3_IMT}]
\label{S3}
Let $K$ be a non-trivial knot in $S^3$. 
Then $K(r) \not\cong S^3$ for all $r \in \mathbb{Q}$. 
\end{theorem}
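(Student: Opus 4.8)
The plan is to argue by contradiction, converting the hypothesis into a statement about two knots sharing the same exterior and then extracting a contradiction from the combinatorics of intersecting surfaces; this is the content of the Gordon--Luecke theorem that knots are determined by their complements. Suppose $K(r) \cong S^3$ for some $r = p/q \in \mathbb{Q}$ (so $q \neq 0$). Since $H_1(K(r)) \cong \mathbb{Z}_{|p|}$ must vanish, we have $p = \pm 1$, and $\Delta(1/0, r) = |q| \geq 1$. Let $V \cong S^1 \times D^2$ be the solid torus attached to produce $K(r)$ and let $\kappa$ be its core. Then $M := S^3 \setminus \operatorname{int} V$ is simultaneously the exterior $E(K)$ and the exterior $E(\kappa)$, so $K$ and $\kappa$ are knots with a common exterior $M$; on the shared torus $T = \partial M$ the meridian of $K$ is the slope $1/0$, while the meridian of $\kappa$ is the filling slope $r$, and these are distinct because $|q| \geq 1$. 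The nontriviality of $K$ forces $\kappa$ to be nontrivial as well, and the goal is to show this configuration cannot occur. For the case that $K$ is not a torus knot, one may first invoke the Cyclic Surgery Theorem (Theorem~\ref{cyclic_surgery}): since both $\pi_1(K(1/0))$ and $\pi_1(K(r))$ are trivial, hence cyclic, we get $|q| \leq 1$, so $r = \pm 1$ is integral; torus knots are disposed of separately by Moser's classification \cite{Moser_IMT}, which exhibits all their surgeries and shows none is $S^3$.

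Next I would set up the two surfaces underlying the combinatorial argument. Place $\kappa$ in thin position inside $S^3 = K(r)$ with respect to a Morse height function, and intersect $M$ with a generic level $2$--sphere to obtain a meridional planar surface $P \subset M$ whose boundary consists of parallel copies of the meridian $1/0$ of $K$. For the second surface take a meridian disk of $V$, giving a planar surface $Q \subset M$ with boundary of slope $r$. After an ambient isotopy making $P$ and $Q$ meet transversely and with minimal intersection, the arcs of $P \cap Q$ define two fat-vertexed graphs, $G_P$ in $\widehat{P}$ and $G_Q$ in $\widehat{Q}$, whose vertices are the boundary circles (punctures) and whose edges are the intersection arcs. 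The valences of the vertices are governed by the distance $\Delta(1/0, r) = |q|$, and the labels that edge-endpoints receive around each vertex satisfy the usual parity rule. Irreducibility of $S^3$ together with minimality of $|P \cap Q|$ forces both graphs to be \emph{reduced}: no trivial loops and no boundary-compressible configurations.

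The heart of the argument, and the step I expect to be the main obstacle, is the combinatorial analysis pitting $G_P$ and $G_Q$ against each other. Combining the parity rule, a counting estimate on the valences, and an Euler-characteristic bound for graphs embedded in the planar surfaces $\widehat{P}$ and $\widehat{Q}$, one shows that the graphs cannot both be reduced: the edge-valence forced by $|q|$ produces either a family of parallel edges cutting off a disk face or, more seriously, a \emph{Scharlemann cycle}. A Scharlemann cycle yields a lens-space connected summand (or a reducing sphere) in the filled manifold, contradicting $K(r) \cong S^3$, while trivial-loop and excess parallel-edge configurations contradict either minimality of the intersection or the incompressibility of $P$. Making this balancing act precise, with all the label bookkeeping around the fat vertices, is exactly where the delicate work lies; this is the machinery developed by Gordon and Luecke.

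Finally, having excluded every candidate slope $r \in \mathbb{Q}$, I would record that the only slope producing $S^3$ is the meridian $1/0$ itself, corresponding to the trivial refilling $K(1/0) = S^3$, which is not a value in $\mathbb{Q}$. This yields the desired contradiction and establishes $K(r) \not\cong S^3$ for all $r \in \mathbb{Q}$. I would emphasize that the intersection-graph argument is uniform and does not single out torus knots; the reductions via homology and the Cyclic Surgery Theorem merely shrink the list of slopes one must treat, whereas the combinatorial core applies to an arbitrary nontrivial knot and an arbitrary nonmeridional slope.
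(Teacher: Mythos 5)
The paper does not prove this theorem at all: it is stated with a citation to Gordon--Luecke \cite{GLu_S3_IMT}, so there is no internal argument to compare yours against. Judged as a proof, your proposal is an outline of the Gordon--Luecke argument in which the decisive step is explicitly deferred: the assertion that $G_P$ and $G_Q$ cannot both be reduced --- the interplay of the parity rule, the Euler-characteristic count, and the ``great web''/Scharlemann-cycle analysis --- is exactly the content of \cite{GLu_S3_IMT}, and you acknowledge you are not supplying it. The preliminary reductions you give (homology forces $p=\pm1$; Theorem~\ref{cyclic_surgery} forces $|q|\le 1$ for non-torus knots since both $\pi_1(K(1/0))$ and $\pi_1(K(r))$ would be cyclic; Moser's classification \cite{Moser_IMT} disposes of torus knots) are correct and are indeed how the actual proof begins, but they leave the case $r=\pm1$, which is where all the work is.

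Two points in your setup are also wrong as stated. First, a meridian disk of the filling solid torus $V$ is contained in $V$, not in $M=E(K)$, so it cannot serve as the surface $Q\subset M$; in the Gordon--Luecke argument $Q$ is a punctured \emph{level $2$--sphere} for the core $\kappa$ placed in thin position in the surgered $S^3$, just as $P$ is a punctured level sphere for $K$ in the original $S^3$ --- both surfaces are many-punctured, and the graph-theoretic argument depends essentially on $G_Q$ having many vertices and on the thin-position ``no high disk'' property of the thick level. Second, your closing claim that the combinatorial core handles ``an arbitrary nonmeridional slope'' overstates matters: the machinery is run at distance $\Delta(1/0,r)=1$, and the reduction to that case via the Cyclic Surgery Theorem \cite{CGLS_IMT} is not a convenience but a genuine prerequisite. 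So the proposal correctly identifies the strategy but is not a proof; as a citation-level justification it is on the same footing as the paper itself.
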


This theorem says that a meridian, a boundary of a meridian disk of $N(K) \cong K \times D^2$, 
is uniquely determined by the exterior $S^3 - \mathrm{int}N(K)$. 
Hence, this theorem immediately implies a positive answer to 
the ``Knot Complement Conjecture'' proposed by Tietze \cite{Ti_IMT}. 

\medskip

\begin{theorem}[Knot complement theorem \cite{GLu_S3_IMT}]
\label{complement}
Knots are determined by their complements, 
i.e. if there exists an orientation preserving homeomorphism 
from $S^3 - K_1$ to $S^3 - K_2$, 
then there exists an orientation preserving homeomorphism of $S^3$ which sends $K_1$ to $K_2$. 
\end{theorem}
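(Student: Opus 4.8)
The plan is to deduce the statement from the surgery characterization of $S^3$ given in Theorem~\ref{S3}, reducing the recognition of a knot from its complement to the recognition of its meridian slope. First I would pass from complements to exteriors: an orientation-preserving homeomorphism $f\colon S^3 - K_1 \to S^3 - K_2$ is a homeomorphism of open $3$-manifolds, and since each $S^3 - K_i$ is homeomorphic to the interior of the compact exterior $E(K_i)$, the map carries the end of $S^3 - K_1$ to the end of $S^3 - K_2$ and hence restricts, up to isotopy near the end, to an orientation-preserving homeomorphism $h\colon E(K_1) \to E(K_2)$ of the compact exteriors, inducing in particular a homeomorphism $\partial E(K_1) \to \partial E(K_2)$ of the boundary tori. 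Before the main argument I would dispose of the trivial case: if $K_1$ is the unknot then $E(K_1)$ is a solid torus, hence so is $E(K_2)$, forcing $K_2$ to be the unknot as well, and the conclusion is then classical. So from now on I assume $K_1$, and hence $K_2$, is non-trivial.

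The key step is to track the meridian. Let $\mu_1 \subset \partial E(K_1)$ be the meridian slope $1/0$ of $K_1$, and set $r = h(\mu_1) \subset \partial E(K_2)$, a slope on $\partial E(K_2)$. Filling $E(K_1)$ along $\mu_1$ recovers $S^3$ with $K_1$ as the core of the filling solid torus, i.e. $K_1(1/0) = S^3$. Because $h$ is a homeomorphism taking the slope $\mu_1$ to the slope $r$, it extends to a homeomorphism of the corresponding Dehn fillings, whence $K_2(r) \cong K_1(1/0) = S^3$. If $r \neq 1/0$, then $r \in \mathbb{Q}$ and this is a non-trivial filling of the non-trivial knot $K_2$ yielding $S^3$, contradicting Theorem~\ref{S3}. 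Therefore $r = 1/0$; that is, $h$ carries the meridian of $K_1$ to the meridian of $K_2$.

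Once the meridians match, I would extend $h$ across the filling solid tori. A homeomorphism of the boundary tori $\partial E(K_1) \to \partial E(K_2)$ sending meridian to meridian extends over the meridian-disk-bounding solid tori $N(K_1) \cong N(K_2) \cong S^1 \times D^2$ that are reglued to reconstitute $S^3$, since a homeomorphism of $\partial(S^1 \times D^2)$ taking $\{*\} \times \partial D^2$ to $\{*\} \times \partial D^2$ extends over $S^1 \times D^2$. This produces a homeomorphism $\hat{h}\colon S^3 \to S^3$ restricting to $h$ on the exteriors and carrying the core $K_1$ of $N(K_1)$ to the core $K_2$ of $N(K_2)$. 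The final point is orientation: since $h$ is orientation-preserving and the extension is realized by an orientation-preserving identification of the solid tori compatible with the matched boundaries, $\hat{h}$ is orientation-preserving on $S^3$ and sends $K_1$ to $K_2$, as required.

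The essential mathematical content is Theorem~\ref{S3}, which I take as given; the main obstacle in making the argument fully rigorous is therefore not that input but the topological bookkeeping of the first paragraph — the passage from an abstract homeomorphism of open complements to an orientation-preserving homeomorphism of the compact exteriors that is well defined on boundary slopes — together with checking that the final extension over the solid tori preserves, rather than reverses, orientation.
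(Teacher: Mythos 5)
Your argument is correct and is exactly the deduction the paper has in mind: the paper derives Theorem~\ref{complement} as an immediate consequence of Theorem~\ref{S3} by observing that the meridian is the unique slope whose filling yields $S^3$, which is precisely your step of forcing $h(\mu_1)$ to be the meridian of $K_2$ and then extending over the filling solid tori. Your version just spells out the standard bookkeeping (complements versus exteriors, extension over solid tori, orientations) that the paper leaves implicit.
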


\medskip

Theorem~\ref{S3} is regarded as a specialization of the Property P, 
but its proof also provides the following important result as well. 

\medskip

$\bullet$ If $K(r)$ is reducible, meaning that it contains a $2$--sphere not bounding any $3$--ball, 
then it has a lens space $(\ne S^3, S^2 \times S^1)$ as a connected summand. 
Such a slope $r$ is called a {\it reducing surgery slope}.
As a consequence, any homology $3$--sphere obtained by Dehn surgery on a knot in $S^3$ is irreducible. 

\medskip

As we mentioned above, 
for any non-trivial knot $K$,  
$\pi_1(K(r)) =  \{ 1 \}$ only when $r = \pm 1$. 
Note also that for the mirror image $\overline{K}$ of $K$, 
there is an orientation reversing homeomorphism from $\overline{K}(-r)$ to $K(r)$. 
Thus to solve the Property P conjecture it is sufficient to show that $K(1)$ is not simply connected for a non-trivial knot $K$. 
Kronheimer and Mrowka \cite{KM_IMT} settled the Property P conjecture by 
proving an existence of a non-trivial representation from $\pi_1(K(1))$ to $\mathrm{SO}(3)$ for any non-trivial knot. 

\medskip

\noindent
\textbf{Property P Theorem \cite{KM_IMT}.}
{\it Let $K$ be a non-trivial knot in $S^3$. 
Then $\pi_1(K(r)) \ne 1$ for any $r \in \mathbb{Q}$. }

\medskip

\section{Property P and peripheral Magnus property}
\label{peripheral Magnus property} 
Recall that $\pi_1(K(r)) = G(K)/ \langle\!\langle r \rangle\!\rangle$. 
In particular, since $K(\infty) = S^3$, 
the knot group $G(K)$ is normally generated by the meridian, 
i.e. $G(K) = \langle\!\langle \infty \rangle\!\rangle$. 
Then Property P Theorem can be rephrased in the following form without appealing Dehn filling, 
purely in the language of the knot group. 

\medskip

\noindent
\textbf{Property P Theorem ($2^{\mathrm{nd}}$ formulation).}
{\it For any non-trivial knot, 
\[
\langle\!\langle r \rangle\!\rangle = \langle\!\langle \infty \rangle\!\rangle \ \textrm{if and only if}\  
r = \infty.
\] 
}

\medskip

Hence, $\langle\!\langle r \rangle\!\rangle$ is a proper subgroup of $G(K)$ for all $r \in \mathbb{Q}$. 

\medskip

This paraphrase leads us to extend the Property P Theorem ($2^{\mathrm{nd}}$ formulation) to all slopes $r$ in $\mathbb{Q} \cup \{ \infty \}$ 
in order to establish the one to one correspondence between 
slopes and their normal closures. 

\medskip

\begin{theorem}[Peripheral Magnus property \cite{IMT_Magnus_IMT}]
\label{peripheral_Magnus}
Let $K$ be a non-trivial knot and $r, r'$ slopes in $\mathbb{Q} \cup \{ \infty \}$. 
Then 
\[
\langle\!\langle r \rangle\!\rangle = \langle\!\langle r' \rangle\!\rangle\  
\textrm{if and only if}\  r = r'.
\]
\end{theorem}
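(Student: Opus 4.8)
The plan is to prove only the forward implication, since $r=r'$ trivially gives $\langle\!\langle r\rangle\!\rangle=\langle\!\langle r'\rangle\!\rangle$. Write $\gamma_r,\gamma_{r'}\in P(K)$ for slope elements representing $r,r'$. First I would record a purely algebraic reformulation: since a normal closure is generated by the conjugates of its defining element, one has $\langle\!\langle r\rangle\!\rangle=\langle\!\langle r'\rangle\!\rangle$ if and only if $\gamma_r\in\langle\!\langle r'\rangle\!\rangle$ and $\gamma_{r'}\in\langle\!\langle r\rangle\!\rangle$, which by van Kampen means exactly that the $r'$--curve on $\partial E(K)$ is null-homotopic in $K(r)$ and, symmetrically, the $r$--curve is null-homotopic in $K(r')$. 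So the theorem is equivalent to saying that two distinct slopes cannot be mutually null-homotopic in each other's fillings. The meridional case is already settled: if $r'=\infty$ then $\gamma_{r'}\in\langle\!\langle r\rangle\!\rangle$ says $\langle\!\langle r\rangle\!\rangle=\langle\!\langle\infty\rangle\!\rangle$, so the second formulation of the Property P Theorem forces $r=\infty$. Hence I may assume $r,r'\in\mathbb{Q}$ and, toward a contradiction, that $r\ne r'$, so $\Delta(r,r')\ge 1$.

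Next I would reduce the numerator. Abelianizing the Dehn filling exact sequence gives $H_1(K(p/q))\cong\mathbb{Z}_{|p|}$; since $\langle\!\langle r\rangle\!\rangle=\langle\!\langle r'\rangle\!\rangle$ makes the two quotient groups literally equal, $|p|=|p'|$. The core of the argument is then the peripheral intersection $L:=\langle\!\langle r\rangle\!\rangle\cap P(K)=\langle\!\langle r'\rangle\!\rangle\cap P(K)$. Using van Kampen and the fact that $\gamma_r$ bounds a meridian disk of the filling solid torus, the image of $P(K)$ in $\pi_1(K(r))$ is the cyclic group generated by the core $c$ of the filling, with $\gamma_r\mapsto 1$; consequently $L=\langle\gamma_r,\delta^{k}\rangle$, where $\delta$ is a curve dual to $\gamma_r$ and $k$ is the order of $c$ in $\pi_1(K(r))$ (with $k=\infty$ read as $L=\langle\gamma_r\rangle$). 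If $c$ has infinite order then $L$ is infinite cyclic generated by the primitive element $\gamma_r$, and the containment $\gamma_{r'}\in L$ forces the primitive element $\gamma_{r'}$ to equal $\gamma_r^{\pm1}$, i.e. $r=r'$, a contradiction; the same applies with the roles of $r,r'$ exchanged. Thus it remains only to treat the case in which both $K(r)$ and $K(r')$ have a \emph{finite-order} core.

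Finally I would dispose of this exceptional case geometrically. A finite-order core cannot occur when $K(r)$ is hyperbolic, since hyperbolic $3$--manifold groups are torsion-free; more generally Geometrization shows that a finite-order core forces $K(r)$ either to have finite fundamental group or to be reducible. Reducible fillings carry a lens-space summand (the consequence noted after Theorem~\ref{S3}), and together with the finite-$\pi_1$ fillings they are severely constrained: by the Cyclic Surgery Theorem (Theorem~\ref{cyclic_surgery}) and the classification of such exceptional surgeries they occur only on torus knots and cables, along integral or near-integral slopes with $\Delta(r,r')\le 1$. In each of these cases the exterior is Seifert fibered or admits an explicit graph-manifold decomposition, and I would finish by computing the slope elements and their normal closures directly from the Seifert data over the resulting finite list of candidate slopes, obtaining $r=r'$. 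The hard part will be exactly this last step: controlling mutually null-homotopic slopes in the non-hyperbolic fillings requires the geometric classification of exceptional (reducible and finite) Dehn fillings rather than any soft group-theoretic input, whereas the hyperbolic and, more generally, the torsion-free cases are handled uniformly by the peripheral-intersection computation above.
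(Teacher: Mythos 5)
This theorem is not proved in the present paper; it is imported from \cite{IMT_Magnus_IMT}, and the route taken there is visible in Section~7: Lemma~\ref{inclusion} (an inclusion $\langle\!\langle r\rangle\!\rangle\subset\langle\!\langle s\rangle\!\rangle$ with $r\ne s$ forces $s$ to be a finite surgery slope) together with \cite[Proposition~2.4]{IMT_Magnus_IMT} (the same inclusion forces $r$ \emph{not} to be a finite surgery slope); applying both to the two inclusions comprising an equality $\langle\!\langle r\rangle\!\rangle=\langle\!\langle r'\rangle\!\rangle$ with $r\ne r'$ yields an immediate contradiction. Your peripheral-intersection computation is essentially the mechanism behind Lemma~\ref{inclusion}: the image of $\gamma_{r'}$ in $\pi_1(K(r))$ is $c^{\pm\Delta(r,r')}$ for the core $c$, so $\gamma_{r'}\in\langle\!\langle r\rangle\!\rangle$ forces $c$ to have finite order, and the infinite-order case is disposed of exactly as you say. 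Up to that point your argument is sound and matches the intended one.

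The gap is in your final paragraph, which is where the actual content lives. First, your reduction of the finite-order-core case to ``finite $\pi_1$ or reducible'' is fine, but your disposal of the reducible case by ``classification'' is not available: the statement that a reducible filling occurs only along a cabling slope is the Cabling Conjecture, still open in general. The unconditional argument is group-theoretic: $\pi_1(K(r))$ is normally generated by $p_r(\mu)$, which is a power of the core, so if the core had finite order the group would be normally generated by a torsion element; a nontrivial free product (which $\pi_1$ of a reducible $K(r)\ne S^2\times S^1$ would be, using the lens-space-summand consequence of Theorem~\ref{S3} and Property~P) cannot be normally generated by an element conjugate into one free factor. Second, once both $r$ and $r'$ are known to be finite surgery slopes, your plan to ``compute over the resulting finite list of candidate slopes'' does not work as stated: the Cyclic Surgery Theorem excludes torus knots and says nothing about finite non-cyclic fillings; finite surgeries also occur on hyperbolic knots (e.g.\ the $(-2,3,7)$--pretzel knot); and for a torus knot $T_{p,q}$ there are infinitely many lens space slopes $\frac{npq\pm1}{n}$, so there is no finite list. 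This mutual-finite-surgery case is precisely what \cite[Proposition~2.4]{IMT_Magnus_IMT} handles, and it needs a genuine additional argument (comparing the two literally equal finite quotients and the orders of the two cores, together with the structure of finite surgery slopes), not a case check. So the proposal correctly isolates the hard case but does not actually close it.
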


\medskip

We should emphasize that 
even when $\langle\!\langle r \rangle\!\rangle \ne \langle\!\langle r' \rangle\!\rangle$, 
we may have isomorphic fundamental groups $\pi_1(K(r)) = G(K) / \langle\!\langle r \rangle\!\rangle$ and  
$\pi_1(K(r')) = G(K) /  \langle\!\langle r' \rangle\!\rangle$.  
That is, Theorem~\ref{peripheral_Magnus} does not imply $\pi_1(K(r)) = \pi_1(K(r'))$ if and only if  $r=r'$. 
Indeed, for the trefoil knot $T_{2, 3}$, 
there are pairs of distinct rational numbers $r, r'$ such that $K(r)$ is orientation reversingly homeomorphic to $K(r')$. 
For details, see \cite{Mathieu_IMT} and \cite[Proposition~3.1]{IMT_Magnus_IMT}.

\section{Dehn filling trivialization}
\label{function}
Recall that each $r$--Dehn filling induces an epimorphism 
$p_r \colon G(K) \to G(K) /  \langle\!\langle r \rangle\!\rangle = \pi_1(K(r))$, 
which trivializes elements in $\langle\!\langle r \rangle\!\rangle \subset G(K)$. 
We call such trivialization a {\em Dehn filling trivialization}. 

Dually, for a given non-trivial element of $G(K)$, 
we are intrigued by slopes $r$ for which $g$ becomes trivial after $r$--Dehn filling. 
Let us introduce the following function on $G(K)$. 

\medskip

\begin{definition}[Dehn filling trivializing slope set]
\label{D}
Let $K$ be a non-trivial knot in $S^3$. 
To each element $g \in G(K)$ assigning a subset 
\[
\mathcal{S}_K(g) 
=  \{ r \in \mathbb{Q} \mid p_r(g) = 1 \in \pi_1(K(r)) \}
=  \{ r \in \mathbb{Q} \mid g \in \langle\!\langle r \rangle\!\rangle \} \subset \mathbb{Q}, 
\]
we obtain a set valued function 
\[
\mathcal{S}_K \colon G(K) \to 2^{\mathbb{Q}}. 
\]
We call $\mathcal{S}_K(g)$ the {\it Dehn filling trivializing slope set} of $g$, 
or the {\it trivializing slope set} of $g$ for short. 
\end{definition}

\medskip

By definition, for the trivial element $1$, 
$\mathcal{S}_K(1) = \mathbb{Q}$. 

In general, a problem of deciding whether a given element (written by fixed generators as a word) represents the trivial element was formulated by Dehn \cite{Dehn_IMT,Dehn1911_IMT} as an algorithmic problem called the \textit{word problem}. 
Determination of $\mathcal{S}(g)$ requires a ``parametrized word problem'', and it is very hard. 

The function $\mathcal{S}_K \colon G(K) \to 2^{\mathbb{Q}}$ is far from injective. 
In fact, 
obviously, Dehn filling trivializing slope set satisfies 
$\mathcal{S}_K(\alpha^{-1} g \alpha) = \mathcal{S}_K(g)$ for any $\alpha$ and $g$ in $G(K)$. 

Furthermore, we have the following result. 
For notational simplicity, we denote $b^{-1} a b$ by $a^{b}$ 
for $a, b \in G(K)$.  

\medskip

\begin{theorem}
\label{same_trivializing_set}
Let $K$ be a non-trivial knot. 
For any non-trivial element $g \in G(K)$, we have 
\[
\mathcal{S}_K(g) = \mathcal{S}_K(g^{g^{\alpha}} g^{-2})\ \textrm{for any}\ \alpha \in G(K).
\]
\end{theorem}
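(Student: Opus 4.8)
The plan is to prove the equality slope-by-slope: it suffices to show, for every $r \in \mathbb{Q}$, that $g \in \langle\!\langle r \rangle\!\rangle$ if and only if $g^{g^{\alpha}}g^{-2} \in \langle\!\langle r \rangle\!\rangle$. One implication is immediate, since $\langle\!\langle r \rangle\!\rangle$ is normal: if $g \in \langle\!\langle r \rangle\!\rangle$, then both $g^{g^{\alpha}}$ and $g^{-2}$, hence their product, lie in $\langle\!\langle r \rangle\!\rangle$. The content is the converse, which I would attack by passing to the quotient $Q = G(K)/\langle\!\langle r\rangle\!\rangle = \pi_1(K(r))$. Writing $x$ for the image of $g$ and $y = x^{a}$ for the image of $g^{\alpha}$ (where $a$ is the image of $\alpha$), the hypothesis $g^{g^{\alpha}}g^{-2}\in\langle\!\langle r\rangle\!\rangle$ becomes the single relation $y^{-1}xy = x^{2}$ in $Q$, while the desired conclusion $g\in\langle\!\langle r\rangle\!\rangle$ becomes $x = 1$. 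Crucially, $y = a^{-1}xa$ is \emph{conjugate} to $x$.

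Thus the whole theorem reduces to the following purely group-theoretic assertion: if $x,y \in Q$ with $y$ conjugate to $x$ and $y^{-1}xy = x^{2}$, then $x = 1$. I would prove this using residual finiteness. Since $Q = \pi_1(K(r))$ is the fundamental group of a closed orientable $3$--manifold, it is residually finite by Hempel's theorem together with geometrization; hence it is enough to show that the image of $x$ is trivial in every finite quotient $\phi\colon Q \to F$. Set $u = \phi(x)$ and $v = \phi(y)$. Because $y$ is conjugate to $x$, the elements $u$ and $v$ are conjugate in $F$ and so have a common finite order $n$, and the relation persists as $v^{-1}uv = u^{2}$.

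An easy induction on $k$ gives $v^{-k}uv^{k} = u^{2^{k}}$; taking $k = n$ and using $v^{n} = 1$ yields $u = u^{2^{n}}$, i.e. $u^{2^{n}-1}=1$, so that $n \mid 2^{n}-1$. At this point I would invoke the elementary fact that the only positive integer $n$ dividing $2^{n}-1$ is $n = 1$ (if $n>1$, the multiplicative order of $2$ modulo the least prime divisor $p$ of $n$ would divide both $n$ and $p-1$, forcing it to be $1$ and hence $p \mid 1$, a contradiction). Therefore $n = 1$, so $u = 1$. As $\phi$ was arbitrary and $Q$ is residually finite, $x = 1$, completing the converse.

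The step I expect to carry the real weight is the reduction to the order condition. The relation $y^{-1}xy = x^{2}$ by itself is exactly the defining relation of the Baumslag--Solitar group $BS(1,2)$ and is perfectly consistent with $x \neq 1$, so it cannot force triviality on its own. The essential extra input is that $y$ is \emph{conjugate} to $x$, which makes $u$ and $v$ of the same order in every finite quotient; only after combining this with residual finiteness (to guarantee enough finite quotients) and the divisibility $n \mid 2^{n}-1$ does the conclusion $x=1$ emerge. I would therefore be careful to confirm that residual finiteness of $\pi_1(K(r))$ is available for \emph{all} fillings $r$, including reducible and small Seifert fibered ones, which is indeed guaranteed by geometrization.
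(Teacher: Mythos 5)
Your proof is correct and follows essentially the same route as the paper: both directions reduce to the residual finiteness of $\pi_1(K(r))$, and the key mechanism --- that the relation $y^{-1}xy=x^{2}$ with $y$ conjugate to $x$ forces $x$ to die in every finite quotient --- is exactly the one the paper exploits. The only difference is packaging: the paper cites Baumslag--Miller--Troeger for the fact that $v$ lies in the finite residual of the one-relator group $\langle a_1,\ldots,a_n \mid v^{v^{u}}v^{-2}\rangle$ and pushes this forward into $\pi_1(K(r))$ via a commutative square, whereas you reprove that finite-group computation directly (conjugate elements have equal order $n$, so $n \mid 2^{n}-1$, hence $n=1$), which makes your argument self-contained.
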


We will prove Theorem~\ref{same_trivializing_set} in Section~\ref{section:Residual-finiteness} using residual finiteness of $3$--manifold groups. 
A similar result is obtained in \cite{IMT_realization_IMT} by completely different manner. 

On the other hand, since the knot group $G(K)$ is countable, 
Cantor's theorem says that the function $\mathcal{S}_K \colon G(K) \to 2^{\mathbb{Q}}$ is not surjective. 

In particular, for hyperbolic knots we have the following finiteness property. 

\medskip

\begin{theorem}[Finiteness theorem \cite{Osin_IMT,GM_IMT,IchiMT_IMT}]
\label{S_K_hyperbolic}
For any hyperbolic knot $K$,  
$\mathcal{S}_K(g)$ is always finite for any non-trivial element $g \in G(K)$. 
\end{theorem}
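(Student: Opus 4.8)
The plan is to use the theory of group-theoretic Dehn filling for relatively hyperbolic groups. First I would recall that for a hyperbolic knot $K$ the complement $S^3 - K$ carries a complete finite-volume hyperbolic structure with a single cusp, so that $G(K)$ is hyperbolic relative to its peripheral subgroup $P(K) \cong \mathbb{Z} \oplus \mathbb{Z}$. This is exactly the setting of the Dehn filling theorems of Osin and of Groves--Manning, which I would invoke in the following form applied to the finite set $F = \{1, g\} \subset G(K)$: there is a \emph{finite} subset $\mathcal{B} \subset P(K) \setminus \{1\}$ such that for every subgroup $N \le P(K)$ with $N \cap \mathcal{B} = \emptyset$, the quotient map $G(K) \to G(K)/\langle\!\langle N \rangle\!\rangle$ is injective on $F$. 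Since $g \ne 1$, this says precisely that $g \notin \langle\!\langle N \rangle\!\rangle$ whenever $N$ avoids the exceptional set $\mathcal{B}$.

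Next I would translate between slopes and filling subgroups. A slope $r = p/q$ corresponds to the cyclic subgroup $N_r = \langle \mu^p \lambda^q \rangle \le P(K)$ generated by the slope element, and its normal closure is exactly $\langle\!\langle N_r \rangle\!\rangle = \langle\!\langle r \rangle\!\rangle$. Thus $r \in \mathcal{S}_K(g)$ means $g \in \langle\!\langle N_r \rangle\!\rangle$, and by the contrapositive of the filling theorem this forces $N_r \cap \mathcal{B} \ne \emptyset$. Consequently $\mathcal{S}_K(g)$ is contained in the set of slopes $r$ for which the cyclic group $N_r$ meets the finite set $\mathcal{B}$.

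Then I would bound this latter set by elementary lattice arithmetic. Each element of $\mathcal{B}$ is a fixed $\mu^a \lambda^b \in P(K)$ with $(a,b) \ne (0,0)$. Such an element lies in $N_r = \langle \mu^p \lambda^q \rangle$ precisely when $(a,b)$ is an integral multiple of the primitive vector $(p,q)$, and since a non-zero integer vector determines its underlying primitive direction up to sign, each element of $\mathcal{B}$ belongs to at most one cyclic subgroup $N_r$, i.e. contributes at most one slope. Hence at most $|\mathcal{B}|$ slopes $r$ satisfy $N_r \cap \mathcal{B} \ne \emptyset$, and therefore $\mathcal{S}_K(g)$ is finite. (Non-triviality of $g$ is essential here, since $\mathcal{S}_K(1) = \mathbb{Q}$.)

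The main obstacle will be matching the abstract group-theoretic filling statement with the concrete bookkeeping of slopes: one must verify that the relatively hyperbolic structure on $G(K)$ relative to the rank-two peripheral subgroup is the one that feeds correctly into the Osin/Groves--Manning machinery, and that the ``short'' filling slopes excluded by their exceptional set are genuinely parametrized by the finitely many elements of $\mathcal{B}$ in the way described. Once that correspondence is pinned down, the remaining steps are routine.
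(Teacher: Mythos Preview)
Your proposal is correct and follows precisely the approach the paper indicates: the paper does not give its own proof of this theorem but cites it as a known consequence of the group-theoretic Dehn filling theorems of Osin and Groves--Manning (together with \cite{IchiMT_IMT}), and your sketch via the relatively hyperbolic structure on $G(K)$ relative to $P(K)$ and the finite exceptional set $\mathcal{B}$ is exactly that argument.
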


\medskip

\begin{remark}
\label{remark_finiteness}
\begin{enumerate}
\item
Although $\mathcal{S}_K(g)$ is always finite for every non-trivial element, 
there is no
universal bound. 
Indeed, for a given integer $N > 0$, 
we have a non-trivial element $g$ for which $|\mathcal{S}_K(g)| > N$ \cite{IMT_Magnus_IMT}. 

\item
Theorem~\ref{S_K_hyperbolic} does not hold for torus knots. 
Actually, 
for $(p, q)$--torus knot $T_{p, q}$, 
$\frac{m}{n}$--surgery on $T_{p, q}$, where $\frac{m}{n}$ satisfies $|pqn - m | = 1$ yields a lens space, 
and hence every element $g \in [G(T_{p, q}), G(T_{p, q})]$ becomes trivial in $\pi_1(T_{p, q}(m/n)) = \mathbb{Z}_m$. 
This means that $\mathcal{S}_{T_{p, q}}(g) \supset \{ pq \pm 1, \frac{2pq \pm 1}{2}, \dots, \frac{npq \pm 1}{n} , \dots \}$. 
\end{enumerate}

\end{remark}

Let us restate Property P theorem in the context of trivializing slope set. 

\medskip

\noindent
\textbf{Property P Theorem ($3^{\mathrm{rd}}$ formulation).}
{\it Let $K$ be a non-trivial knot. 
For a given non-trivial slope $r \in \mathbb{Q}$, 
we have an element $g \in G(K)$ such that $\mathcal{S}_K(g) \not\ni r$.}

\medskip

Then is it possible to select such an element $g$ so that
\begin{itemize}
\item
$\mathcal{S}_K(g) \ni s$ for a given $s \ne r$?

\medskip

\item
More challengingly, $\mathcal{S}_K(g) = \emptyset$?
\end{itemize}

\medskip

The first question leads us to the Separation Property below. 

\medskip

\noindent
\textbf{Separation Property.}
\begin{enumerate}
\renewcommand{\labelenumi}{(\roman{enumi})}
\item
{\it For given two slopes $r$ and $s$ in $\mathbb{Q}$, 
there exists an element $g \in G(K)$ such that 
$r \in \mathcal{S}_K(g)$ and $s \not\in \mathcal{S}_K(g)$.}

\medskip

\item 
{\it For any disjoint subsets $\mathcal{R}$ and $\mathcal{S}$ of $\mathbb{Q}$, 
there exists an element $g \in G(K)$ such that 
$\mathcal{R} \subset \mathcal{S}_K(g) \subset \mathbb{Q} -  \mathcal{S}$.}
\end{enumerate}

\medskip
Concerning (i), we prove Theorem~\ref{separation_two_slopes}. 
See also Examples~\ref{torus_knot_cyclic} and \ref{pretzel_non_separation}. 
In (ii) as we mentioned in Theorem~\ref{S_K_hyperbolic}, 
for any hyperbolic knot $K$ and any non-trivial element $g \in G(K)$,  
$\mathcal{S}_K(g)$ is a finite set, 
and hence it seems to be reasonable to restrict our attention to finite subsets $\mathcal{R}$.

In \cite{IMT_realization_IMT} we have shown the following result for hyperbolic knots. 
We say that a slope $r$ is a {\it Seifert surgery slope} if $K(r)$ is a Seifert fiber space. 

\medskip

\begin{theorem}[\cite{IMT_realization_IMT}]
\label{separation_hyperbolic}
Let $K$ be a hyperbolic knot. 
Let $\mathcal{R} = \{ r_1, \dots ,r_n\}$ and $\mathcal{S} = \{s_1, \dots , s_m\}$ 
be any finite, non-empty subsets of $\mathbb{Q}$ such that $\mathcal{R} \cap \mathcal{S} = \emptyset$.  
Assume that $\mathcal{S}$ does not contain a Seifert surgery slope. 
Then there exists an element $g \in [G(K ), G(K )] \subset G(K )$ such that $\mathcal{R} \subset  \mathcal{S}_K(g) \subset \mathbb{Q}- \mathcal{S}$. 
\end{theorem}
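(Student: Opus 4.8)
The plan is to produce the desired $g$ as a single commutator $g=[a,b]$ whose first entry $a$ already lies in the intersection $N:=\bigcap_{i=1}^{n}\langle\!\langle r_i\rangle\!\rangle$. The advantage of this shape is that membership in $N$ is then automatic: since each $\langle\!\langle r_i\rangle\!\rangle$ is normal, for $a\in N$ and any $b\in G(K)$ one has $[a,b]=a^{-1}\,(b^{-1}ab)\in\langle\!\langle r_i\rangle\!\rangle$ for every $i$, so $[a,b]\in N\cap[G(K),G(K)]$ and hence $\mathcal R\subseteq\mathcal S_K([a,b])$ with $[a,b]$ a commutator, exactly as required. Everything then reduces to arranging the non-vanishing condition $[a,b]\notin\langle\!\langle s_j\rangle\!\rangle$ for each $j$, that is, $[p_{s_j}(a),p_{s_j}(b)]\neq1$ in $\pi_1(K(s_j))$.

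To feed this machine I would first exhibit an explicit supply of elements of $N$. For $z\in G(K)$ set
\[
a(z)=[\cdots[[z,\gamma_{r_1}],\gamma_{r_2}],\ldots,\gamma_{r_n}],
\]
the left-normed iterated commutator of $z$ against the slope elements $\gamma_{r_1},\ldots,\gamma_{r_n}$. A short induction shows $a(z)\in N$: the outermost bracket with $\gamma_{r_n}$ places $a(z)$ in $\langle\!\langle r_n\rangle\!\rangle$, while for $i<n$ the inner commutator already lies in the normal subgroup $\langle\!\langle r_i\rangle\!\rangle$, and bracketing a member of a normal subgroup against anything keeps it inside. Thus it remains only to choose $z$ (and later $b$) so that the relevant images survive the $s_j$-fillings.

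Here the hypothesis that $\mathcal S$ contains no Seifert surgery slope does the decisive work. By geometrization, for a hyperbolic knot a non-Seifert filling $K(s_j)$ is irreducible with infinite fundamental group, hence aspherical; in particular $\pi_1(K(s_j))$ is torsion-free, and since a closed orientable aspherical $3$-manifold whose group has non-trivial centre is Seifert fibred, its centre is trivial. Consequently (i) every non-trivial element of $\pi_1(K(s_j))$ is non-central, and (ii) every non-trivial normal subgroup of $\pi_1(K(s_j))$ is infinite. Using these I would finish as follows. For each fixed $j$ the images $p_{s_j}(\gamma_{r_i})$ are non-trivial, being non-zero powers of the infinite-order core of the filling (as $r_i\neq s_j$), and since $\pi_1(K(s_j))$ is non-elementary a general-position choice of $p_{s_j}(z)$ makes the iterated commutator $p_{s_j}(a(z))\neq1$; thus $p_{s_j}(N)\neq1$, so by (ii) the index of $\langle\!\langle s_j\rangle\!\rangle\cap N$ in $N$ is infinite. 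B.~H.~Neumann's lemma then forbids the finitely many subgroups $\langle\!\langle s_j\rangle\!\rangle\cap N$ from covering $N$, producing a single $a\in N$ with $p_{s_j}(a)\neq1$ for all $j$, whence $p_{s_j}(a)$ is non-central by (i). A second application of Neumann's lemma, now to the infinite-index centraliser preimages $p_{s_j}^{-1}\bigl(C_{\pi_1(K(s_j))}(p_{s_j}(a))\bigr)\subsetneq G(K)$, yields $b$ outside all of them, so that $[p_{s_j}(a),p_{s_j}(b)]\neq1$ for every $j$, and $g=[a,b]$ is the required element.

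The main obstacle is guaranteeing $p_{s_j}(N)\neq1$ at the finitely many \emph{exceptional} (non-hyperbolic) slopes in $\mathcal S$. For hyperbolic $K(s_j)$ the core is a closed geodesic and the general-position argument is unproblematic, but when $K(s_j)$ is toroidal one must first rule out degenerate collapses (for instance the core becoming trivial, which would force all $p_{s_j}(\gamma_{r_i})=1$ and kill the whole family $a(z)$) before the iterated commutator can be certified non-trivial. This is precisely where I would invoke the relatively hyperbolic Dehn filling theory of Osin and of Groves--Manning \cite{Osin_IMT,GM_IMT}, together with the Finiteness Theorem~\ref{S_K_hyperbolic}: these control, for all but finitely many fillings, the intersection of $\langle\!\langle s\rangle\!\rangle$ with a prescribed finite set of elements, and they guarantee that a suitably generic loxodromic element of $N$ is not swallowed by $\langle\!\langle s_j\rangle\!\rangle$, supplying the needed witness even at the exceptional slopes. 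This geometric input is what genuinely replaces the residual-finiteness argument used for Theorem~\ref{same_trivializing_set}; the full details are carried out in \cite{IMT_realization_IMT}.
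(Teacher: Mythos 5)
Your reduction of the problem is sound and quite clean: once you know $p_{s_j}(N)\neq 1$ for $N=\bigcap_i\langle\!\langle r_i\rangle\!\rangle$ and each $j$, the two applications of B.~H.~Neumann's lemma (first inside $N$, then to the centraliser preimages) correctly produce a commutator $[a,b]\in N\cap[G(K),G(K)]$ surviving every $s_j$--filling, and the iterated-commutator device $a(z)\in N$ is a valid source of elements of $N$. The problem is that the single claim $p_{s_j}(N)\neq 1$ \emph{is} the theorem, in essence, and you do not prove it. Your ``general position'' argument for $p_{s_j}(a(z))\neq 1$ does not go through as stated: all the images $p_{s_j}(\gamma_{r_i})$ are powers of one element, the core $c$ of the filling solid torus, and when $K(s_j)$ is toroidal the centraliser of $c$ can be large (an entire Seifert piece of the JSJ decomposition when $c$ is isotopic to a fibre), while the condition that $[\bar z,c^{e_1}]$ avoid $C(c^{e_2})$ is not a coset condition on $\bar z$, so neither Neumann's lemma nor a naive genericity count certifies the iterated commutator is non-trivial. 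Your fallback — Osin and Groves--Manning — controls what $\langle\!\langle s\rangle\!\rangle$ does to a prescribed finite set of elements only for \emph{all but finitely many} fillings, not for a slope $s_j$ you were handed in advance; likewise the Finiteness Theorem~\ref{S_K_hyperbolic} says each non-trivial $g$ is killed by only finitely many slopes, but a priori every non-trivial element of $N$ could be killed by some member of $\mathcal{S}$. At the exceptional slopes you explicitly defer to \cite{IMT_realization_IMT}, i.e.\ to the result being proved.

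This missing step is exactly what the present paper isolates as ``the first step'' when it extends the theorem beyond hyperbolic knots: Proposition~\ref{non-trivial} and its case analysis (Propositions~\ref{hyperbolic_persistent_[G,G]_noncyclic}, \ref{persistent_[G,G]_torus}, \ref{persistent_[G,G]_satellite}, \ref{cable_torus}) exist precisely to manufacture elements of $[G(K),G(K)]$ not killed by the fillings in $\mathcal{S}$, using explicit commutators in normal forms, the residual-finiteness trick of Corollary~\ref{same_trivialization_residually_finite}, and scl/Bavard duality — none of which appears in your sketch. A secondary gap: you assert that a non-Seifert filling of a hyperbolic knot is aspherical with torsion-free, centreless fundamental group, but excluding Seifert surgery slopes does not by itself exclude reducible fillings $L(p,q)\,\#\,W$, whose groups have torsion and for which (i) and (ii) fail; this case must be ruled out or handled separately (the paper leans on \cite{GLu_S3_IMT} and \cite{Sch_red_IMT} for this). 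As it stands, the proposal is an attractive reduction, not a proof.
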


\medskip

In this article, 
we will generalize this to the following result which includes satellite knots. 

\medskip

\begin{theorem}[Separation theorem]
\label{separation}
Let $K$ be a non-trivial knot which is not a torus knot.  
Let $\mathcal{R} = \{ r_1, \dots, r_n \}$ and $\mathcal{S} = \{ s_1, \dots, s_m \}$ be any finite subsets of $\mathbb{Q}$ such that 
$\mathcal{R} \cap \mathcal{S} = \emptyset$. 
Assume that $\mathcal{S}$ does not contain a Seifert surgery slope. 
Then there exists an element $g \in [G(K), G(K) ]\subset G(K)$ such that $\mathcal{R} \subset \mathcal{S}_K(g) \subset \mathbb{Q} - \mathcal{S}$. 
\end{theorem}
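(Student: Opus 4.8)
The plan is to reduce to the satellite case and there to combine a commutator construction securing the death conditions $r_i\in\mathcal{S}_K(g)$ with a residual finiteness argument securing the survival conditions $s_j\notin\mathcal{S}_K(g)$. After disposing of the degenerate cases --- when $\mathcal{R}=\mathcal{S}=\emptyset$ any nontrivial commutator works, and when only one of $\mathcal{R},\mathcal{S}$ is empty the construction below simplifies --- I would invoke geometrization: a nontrivial non-torus knot is either hyperbolic or a satellite. The hyperbolic case is precisely Theorem~\ref{separation_hyperbolic}, so the genuinely new content is the satellite case, on which I concentrate.

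For the death conditions I take $g$ of commutator type, so that $g\in[G(K),G(K)]$ is automatic while $g\in\langle\!\langle r_i\rangle\!\rangle$ is built in. Concretely, for any $w\in\bigcap_{i=1}^{n}\langle\!\langle r_i\rangle\!\rangle$ and any $x\in G(K)$ the commutator $g=[w,x]=w^{-1}x^{-1}wx$ lies in $\langle\!\langle w\rangle\!\rangle\subset\bigcap_i\langle\!\langle r_i\rangle\!\rangle$ and in $[G(K),G(K)]$; thus $\mathcal{R}\subset\mathcal{S}_K(g)$ for every such choice, and the problem collapses to producing $w$ and $x$ with $p_{s_j}(g)\neq1$ for all $j$ simultaneously.

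The survival mechanism rests on the hypothesis that no $s_j$ is a Seifert surgery slope. Because $K$ is nontrivial, $H_1(K(s_j))$ is cyclic, so an abelian $\pi_1(K(s_j))$ would force $K(s_j)$ to be a Seifert fibered space (a lens space or $S^1\times S^2$); therefore $\pi_1(K(s_j))$ is nonabelian. By the Seifert fiber space theorem, a closed orientable $3$--manifold whose fundamental group has nontrivial center is Seifert fibered, so $\pi_1(K(s_j))$ in fact has trivial center. Consequently, once $\bar{w}:=p_{s_j}(w)\neq1$ it is automatically non-central, and one may choose $x$ with $[\bar{w},p_{s_j}(x)]\neq1$; everything thus reduces to finding a single $w\in\bigcap_i\langle\!\langle r_i\rangle\!\rangle\cap[G(K),G(K)]$ whose image survives \emph{all} the fillings $K(s_j)$.

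This is the main obstacle, and residual finiteness is the principal tool. The peripheral Magnus property (Theorem~\ref{peripheral_Magnus}) already gives $p_{s_j}(\gamma_{r_i})\neq1$ for all $i,j$ (since $r_i\neq s_j$), placing nontrivial elements of $\bigcap_i\langle\!\langle r_i\rangle\!\rangle$ outside each individual $\langle\!\langle s_j\rangle\!\rangle$; to avoid all $\langle\!\langle s_j\rangle\!\rangle$ at once I would use residual finiteness of the groups $\pi_1(K(s_j))$ to pass to finite quotients detecting these nontrivialities, and then select $w$ and $x$ with simultaneously nontrivial images. The delicate point is to guarantee that $\bigcap_i\langle\!\langle r_i\rangle\!\rangle\cap[G(K),G(K)]$ is not contained in any $\langle\!\langle s_j\rangle\!\rangle$: the non-Seifert hypothesis is exactly what excludes a central Seifert fiber in $\pi_1(K(s_j))$ whose presence --- as in the torus-knot/lens-space phenomenon recorded in Remark~\ref{remark_finiteness} --- would annihilate the entire commutator subgroup. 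For satellite $K$ I would control this through the JSJ decomposition of $K(s_j)$: a non-Seifert filling leaves a centerless, residually finite hyperbolic or graph piece into which a suitable $w$ injects, certifying $p_{s_j}(w)\neq1$ and completing the construction.
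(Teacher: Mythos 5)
Your overall architecture (geometrization, reduce to the satellite case, build $g$ as a commutator to force $g\in[G(K),G(K)]\cap\bigcap_i\langle\!\langle r_i\rangle\!\rangle$, use the non-Seifert hypothesis to get centerless filled groups) is reasonable, but the proof has a genuine gap at exactly the point you flag as ``the main obstacle'': you never actually produce the element $w\in\bigcap_{i}\langle\!\langle r_i\rangle\!\rangle\cap[G(K),G(K)]$ whose image survives all the fillings $K(s_j)$. The peripheral Magnus property only distinguishes two normal closures at a time; it gives no lower bound on the intersection $\bigcap_i\langle\!\langle r_i\rangle\!\rangle$, which can be drastically small (Example~\ref{torus_knot_pq} shows $\langle\!\langle pq\rangle\!\rangle\cap\langle\!\langle r\rangle\!\rangle=\{1\}$ for torus knots), and residual finiteness of $\pi_1(K(s_j))$ only \emph{detects} that a given nontrivial element survives --- it is not a tool for \emph{constructing} an element of a prescribed intersection of normal closures that survives several quotients simultaneously. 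Two further steps are asserted without justification: (i) even knowing $\bigcap_i\langle\!\langle r_i\rangle\!\rangle\not\subset\langle\!\langle s_j\rangle\!\rangle$ for each $j$ separately does not yield a single $w$ outside the union $\bigcup_j\langle\!\langle s_j\rangle\!\rangle$ (a subgroup can be covered by three or more proper subgroups); and (ii) after fixing $w$, you need one $x$ with $[p_{s_j}(w),p_{s_j}(x)]\neq1$ for \emph{all} $j$ at once, which requires ruling out that some centralizer $C_{\pi_1(K(s_j))}(p_{s_j}(w))$ pulls back to a finite-index subgroup of $G(K)$; trivial center alone does not do this.

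For comparison, the paper takes a different and more concrete route. It quotes the proof of Theorem~\ref{separation_hyperbolic} from \cite{IMT_realization_IMT} verbatim except for its first step, which is supplied by Proposition~\ref{non-trivial}: for $\mathcal{S}$ containing no finite surgery slope there exist infinitely many mutually non-conjugate $x\in[G(K),G(K)]$ with $p_s(x)\neq1$ for all $s\in\mathcal{S}$. That proposition is proved by explicit constructions: for torus knots $g=[x,y]$ in the amalgam $\langle x\rangle\ast_{\langle x^p=y^q\rangle}\langle y\rangle$ together with the Baumslag--Miller--Troeger elements $g^{g^{w_n}}g^{-2}$ (this is where residual finiteness actually enters, via Theorem~\ref{residually-finite}, and for a purpose opposite to yours --- preserving, not detecting, trivializing sets); for satellites, a non-peripheral element of $[G(k),G(k)]$ of the companion $k$ is pushed through the splitting $\pi_1(K(s))=G(k)\ast_{\pi_1(\partial V(K;s))}\pi_1(V(K;s))$, with the exceptional $(abq\pm1,q)$--cables of $T_{a,b}$ handled separately using $[\tau_a,\tau_b]\lambda_K^{\ell}$ and a quasimorphism/stable-commutator-length estimate to obtain non-conjugacy. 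None of this machinery, nor the imported argument from \cite{IMT_realization_IMT} that actually secures $\mathcal{R}\subset\mathcal{S}_K(g)$, is replaced by anything concrete in your sketch, so as written the proposal does not constitute a proof.
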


\medskip

As we observe in Section~\ref{example}, 
Theorem~\ref{separation} does not hold unconditionally. 

\medskip

Let us turn to the second question. 
Since $G(K) = \langle\!\langle \mu \rangle\!\rangle$, 
$\pi_1(K(r))$ is also normally generated by a single element $p_r(\mu)$, 
i.e. $\pi_1(K(r)) = \langle\!\langle p_r(\mu) \rangle\!\rangle$. 
Hence, Property P implies that $p_r(\mu) \ne 1$ for all slopes $r \in \mathbb{Q}$. 

Thus we obtain an equivalent statement of Property P Theorem below, 
which also determines $\mathcal{S}(g)$ in the case where $g$ is a meridian. 

\medskip

\noindent
\textbf{Property P Theorem ($4^{\mathrm{th}}$ formulation).}
{\textit{For any non-trivial knot $K$, 
$\mathcal{S}_K(\mu) = \emptyset$.}

\medskip

This then motivates us to find further elements with $\mathcal{S}_K(g) = \emptyset$. 
For convenience we call such an element a {\em persistent element}. 
In the forthcoming paper \cite{IMT_ubiquitous_IMT} we will discuss persistent elements in details.

Property P Theorem  ($4^{\mathrm{th}}$ formulation) says that the empty set can be realized by $\mathcal{S}_K(\mu)$ for any non-trivial knot $K$. 
Then Separation Property and Property P Theorem  ($4^{\mathrm{th}}$ formulation) inspire to the following Realization Property. 

\medskip

\noindent
\textbf{Realization Property.}\quad 
{\it 
Every finite subset $\mathcal{R} \subset \mathbb{Q}$ can be realized by $\mathcal{S}_K(g)$ for some element $g \in G(K)$.}

\medskip

As we mentioned above, even if $\mathcal{R}$ is finite, 
we may have two disjoint family of slopes $\mathcal{R}$ and $\mathcal{S}$ for which there is no element $g \in G(K)$ such that 
$\mathcal{R} \subset \mathcal{S}_K(g) \subset \mathbb{Q} - \mathcal{S}$. 
This means that $\mathcal{R}$ cannot be realized by $\mathcal{S}_K(g)$ for any element $g$. 

\medskip

We discussed the Realization Problem for hyperbolic knots in \cite{IMT_realization_IMT} and established the Realization Property subject to an additional condition. 
Theorem~\ref{separation_hyperbolic} is the key step for its proof. 
For simplicity we state the simplest case. 

\medskip

\begin{theorem}[Realization theorem \cite{IMT_realization_IMT}]
\label{realization}
Let $K$ be a hyperbolic knot without exceptional \(i.e. non-hyperbolic\) surgery. 
Let $\mathcal{R} = \{ r_1, \ldots, r_n\}$ be any finite \(possibly empty\) subset of $\mathbb{Q}$. 
Then there exists an element $g \in G(K)$ such that 
$\mathcal{S}_K(g) = \mathcal{R}$.
\end{theorem}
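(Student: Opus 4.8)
The plan is to deduce the Realization Theorem from the Separation Theorem (here specialized to hyperbolic knots, Theorem~\ref{separation_hyperbolic}) by an elementary combinatorial manipulation. The key observation is that for a hyperbolic knot $K$ without exceptional surgery, every slope $s \in \mathbb{Q}$ gives a hyperbolic $K(s)$, so \emph{no} slope is a Seifert surgery slope; this is precisely what frees us to apply the separation result with an arbitrary finite obstruction set $\mathcal{S}$. Thus the structural content is already available, and the remaining task is to promote the two-sided inclusion $\mathcal{R} \subset \mathcal{S}_K(g) \subset \mathbb{Q} - \mathcal{S}$ into an \emph{exact} equality $\mathcal{S}_K(g) = \mathcal{R}$.

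First I would recall from Theorem~\ref{S_K_hyperbolic} that for a hyperbolic knot every non-trivial element $g \in G(K)$ has $\mathcal{S}_K(g)$ finite. Hence if we can produce $g$ with $\mathcal{R} \subset \mathcal{S}_K(g)$ and $\mathcal{S}_K(g)$ disjoint from some explicitly chosen finite set $\mathcal{S}$, the only way equality $\mathcal{S}_K(g) = \mathcal{R}$ could fail is through ``extra'' slopes lying in $\mathbb{Q} - (\mathcal{R} \cup \mathcal{S})$. The strategy to eliminate these is a bootstrapping or iterative enlargement of $\mathcal{S}$. Concretely, starting from $\mathcal{S} = \emptyset$ (or any small initial choice disjoint from $\mathcal{R}$), apply Theorem~\ref{separation_hyperbolic} to obtain a first candidate $g_0$ with finite trivializing set $T_0 = \mathcal{S}_K(g_0) \supset \mathcal{R}$; the slopes in $T_0 \setminus \mathcal{R}$ are finitely many, so adjoin them to the obstruction set and re-apply the separation theorem to forbid exactly those. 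Since each re-application is licensed (the enlarged $\mathcal{S}$ still contains no Seifert surgery slope, as there are none), this process must be controlled so that it terminates.

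The main obstacle I anticipate is exactly the termination/control of this iteration: naively forbidding the extra slopes of $g_0$ may cause the next element $g_1$ to acquire \emph{new} extraneous slopes, so one cannot merely iterate blindly. I would address this by building $g$ directly as a product of commutator-type building blocks, one tailored to each $r_i \in \mathcal{R}$, whose trivializing sets are individually understood, and invoking the robustness of the construction in Theorem~\ref{separation_hyperbolic} — in particular the freedom to realize $\mathcal{R}$ inside $\mathcal{S}_K(g)$ while simultaneously excluding any prescribed finite $\mathcal{S}$ — to pin the trivializing set down to $\mathcal{R}$ in a single application with a sufficiently large, but still finite, chosen obstruction set. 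The delicate point is arguing that one finite $\mathcal{S}$ suffices, i.e. that the set of potential extraneous slopes produced by the construction is a priori bounded and can be absorbed into $\mathcal{S}$ before running the separation argument.

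Finally I would handle the degenerate cases separately: if $\mathcal{R} = \emptyset$ the claim reduces to producing a persistent element $g$ with $\mathcal{S}_K(g) = \emptyset$, which follows from the same mechanism with $\mathcal{R}$ empty and $\mathcal{S}$ chosen to swallow the (finitely many) slopes the construction might otherwise trivialize; and if $\mathcal{R}$ is a single slope or the element $g$ happens to be trivial one checks the boundary conventions ($\mathcal{S}_K(1) = \mathbb{Q}$) do not interfere, since our $g$ is taken in the commutator subgroup and is non-trivial. With the no-exceptional-surgery hypothesis removing every Seifert obstruction, these cases assemble into the stated equality $\mathcal{S}_K(g) = \mathcal{R}$.
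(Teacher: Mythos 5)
Your reduction to the Separation Theorem is the right starting point (the paper itself says Theorem~\ref{separation_hyperbolic} is the key step in the cited proof), and your preliminary observations are correct: the no-exceptional-surgery hypothesis makes every $K(s)$ hyperbolic, so no slope is a Seifert surgery slope and the hypothesis on $\mathcal{S}$ is vacuous; Theorem~\ref{S_K_hyperbolic} makes every $\mathcal{S}_K(g)$ finite; and the case $\mathcal{R}=\emptyset$ does follow from Proposition~\ref{hyperbolic_persistent_[G,G]_noncyclic}, since a hyperbolic knot with no exceptional surgery has no cyclic surgery slope. However, the heart of the theorem --- upgrading $\mathcal{R} \subset \mathcal{S}_K(g) \subset \mathbb{Q}-\mathcal{S}$ to the equality $\mathcal{S}_K(g)=\mathcal{R}$ --- is exactly where your argument stops being a proof. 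The iterative scheme (adjoin the extraneous slopes of $g_0$ to $\mathcal{S}$ and reapply) does not terminate, for the reason you yourself flag: each new element may acquire new extraneous slopes, nothing bounds them, and Theorem~\ref{separation_hyperbolic} only accepts \emph{finite} obstruction sets, so no finite number of applications can control $p_s(g)$ for the infinitely many $s \notin \mathcal{R}$ simultaneously. Your fallback --- that $g$ can be built from ``commutator-type building blocks'' whose extraneous slopes are ``a priori bounded'' and can be absorbed into $\mathcal{S}$ in one application --- is precisely the assertion that needs proof, and you give neither a construction nor a reason such a bound exists.

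What is actually required (and what the proof in \cite{IMT_realization_IMT} supplies) is a direct construction of a non-trivial element of $\bigcap_{r\in\mathcal{R}}\langle\!\langle r \rangle\!\rangle$ together with a verification, uniform over \emph{all} $s\notin\mathcal{R}$ at once, that its image in $\pi_1(K(s))$ is non-trivial; this uses the group-theoretic structure of the filled manifolds (under your hypothesis all of them are closed hyperbolic, so for instance abelian subgroups and peripheral images of their fundamental groups are cyclic), not merely the finiteness of $\mathcal{S}_K(g)$. As written, your proposal re-derives the already-stated Theorem~\ref{separation_hyperbolic} and leaves the passage from the two-sided inclusion to equality as an acknowledged but unfilled gap.
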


\medskip
\begin{example}
Let $K_1$ be the hyperbolic knot $6_3$ depicted by Figure~\ref{realization_examples} $($Left$)$. 
Following \cite[Theorem~1.1]{BrittenhamWu_IMT} it is the simplest hyperbolic knot without exceptional surgery with respect to the crossing numbers.  
Then every finite (possibly empty) subset $\mathcal{R} \subset \mathbb{Q}$ is realized as 
$\mathcal{S}_{K_1}(g)$ for some element $g \in G(K_1)$.
\end{example}

\medskip

\begin{theorem}[\cite{IMT_realization_IMT}]
\label{realization_exceptional}
Let $K$ be a hyperbolic knot and $\mathcal{R}$ the set of exceptional surgery slopes. 
Then there exists an element $g \in G(K)$ such that 
$\mathcal{S}_K(g) = \mathcal{R}$.
\end{theorem}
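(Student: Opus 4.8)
The plan is to realize $\mathcal{R}$ as a trivializing slope set by combining the Separation theorem (Theorem~\ref{separation}) with the finiteness statement (Theorem~\ref{S_K_hyperbolic}). The crucial structural observation is that, since $\mathcal{R}$ is by definition the set of \emph{all} exceptional (i.e.\ non-hyperbolic) surgery slopes, every slope in $\mathbb{Q} - \mathcal{R}$ produces a hyperbolic filling; in particular no slope outside $\mathcal{R}$ is a Seifert surgery slope. Hence any finite subset of $\mathbb{Q} - \mathcal{R}$ is a legitimate ``avoidance set'' for Theorem~\ref{separation}, and the hypothesis that $\mathcal{S}$ contain no Seifert surgery slope will always be satisfied automatically. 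First I would record that $\mathcal{R}$ is finite, by Thurston's hyperbolic Dehn surgery theorem, and that $K$, being hyperbolic, is not a torus knot, so Theorem~\ref{separation} indeed applies.

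Next I would produce two elements whose trivializing sets both contain $\mathcal{R}$ but whose ``excess'' parts are disjoint. Applying Theorem~\ref{separation} to $\mathcal{R}$ (with a trivial or harmless avoidance set) yields $g_0 \in [G(K),G(K)]$ with $\mathcal{R} \subset \mathcal{S}_K(g_0)$; by Theorem~\ref{S_K_hyperbolic} the set $\mathcal{S}_K(g_0)$ is finite, so we may write $\mathcal{S}_K(g_0) = \mathcal{R} \sqcup \mathcal{E}_0$ with $\mathcal{E}_0 \subset \mathbb{Q} - \mathcal{R}$ finite. Applying Theorem~\ref{separation} a second time, now to $\mathcal{R}$ with avoidance set $\mathcal{S} = \mathcal{E}_0$ (permissible because $\mathcal{E}_0$ consists of hyperbolic, hence non-Seifert, slopes), yields $g_1 \in [G(K),G(K)]$ with $\mathcal{R} \subset \mathcal{S}_K(g_1) \subset \mathbb{Q} - \mathcal{E}_0$; writing $\mathcal{S}_K(g_1) = \mathcal{R} \sqcup \mathcal{E}_1$, we then have $\mathcal{E}_0 \cap \mathcal{E}_1 = \emptyset$.

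It now suffices to manufacture from $g_0$ and $g_1$ a single element $g$ with $\mathcal{S}_K(g) = \mathcal{S}_K(g_0) \cap \mathcal{S}_K(g_1)$, for then $\mathcal{S}_K(g) = (\mathcal{R} \sqcup \mathcal{E}_0) \cap (\mathcal{R} \sqcup \mathcal{E}_1) = \mathcal{R}$. The natural candidate is a twisted product $g = g_0\, g_1^{\,w}$ for a suitable $w \in G(K)$, which again lies in $[G(K),G(K)]$. One inclusion is immediate: if $r \in \mathcal{S}_K(g_0) \cap \mathcal{S}_K(g_1)$ then $p_r(g_0) = p_r(g_1) = 1$, whence $p_r(g) = 1$. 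Moreover, for $r \in \mathcal{E}_0$ one has $p_r(g) = p_r(g_1)^{p_r(w)} \neq 1$ and for $r \in \mathcal{E}_1$ one has $p_r(g) = p_r(g_0) \neq 1$, so the excess slopes of $g_0$ and $g_1$ are eliminated \emph{for free}, independently of the choice of $w$.

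The hard part, and the main obstacle, is to rule out \emph{new} trivializing slopes: a slope $r \notin \mathcal{R} \cup \mathcal{E}_0 \cup \mathcal{E}_1$ at which $p_r(g_0)$ and $p_r(g_1)$ are both non-trivial can nonetheless satisfy $p_r(g_0) = \bigl(p_r(g_1)^{-1}\bigr)^{p_r(w)}$, forcing $p_r(g) = 1$. To exclude this I would argue that the conjugator $w$ can be chosen generically so that, at every hyperbolic filling, $p_r(g_0)$ is not conjugate to $p_r(g_1)^{-1}$; since by Theorem~\ref{S_K_hyperbolic} only finitely many slopes can lie in $\mathcal{S}_K(g)$ for any fixed $w$, and the ``bad'' conjugators for each individual slope form a non-generic condition inside $G(K)$, one expects a simultaneous choice of $w$ avoiding all of them, via a Baire-category or residual-finiteness type argument. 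Establishing this intersection lemma rigorously is where the real work lies; granting it, the resulting $g \in [G(K),G(K)]$ satisfies $\mathcal{S}_K(g) = \mathcal{R}$, completing the proof.
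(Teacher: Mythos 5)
There is a genuine gap, and you have in fact flagged it yourself: the ``intersection lemma'' asserting that one can choose $w$ so that $\mathcal{S}_K(g_0\,g_1^{\,w})=\mathcal{S}_K(g_0)\cap\mathcal{S}_K(g_1)$ is exactly the content you would need, and nothing in your sketch establishes it. The preliminary reductions are fine: $\mathcal{R}$ is finite by Thurston's hyperbolic Dehn surgery theorem, every Seifert surgery slope is exceptional and hence lies in $\mathcal{R}$, so Theorem~\ref{separation} applies with any finite $\mathcal{S}\subset\mathbb{Q}-\mathcal{R}$, and your verification that $\mathcal{R}\subset\mathcal{S}_K(g)$ while $\mathcal{S}_K(g)\cap(\mathcal{E}_0\cup\mathcal{E}_1)=\emptyset$ is correct. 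But the remaining step is not a routine genericity argument. For a fixed slope $r$, the set of ``bad'' conjugators is $\{w : p_r(g_0)=(p_r(g_1)^{-1})^{p_r(w)}\}$, which, when nonempty, is the full $p_r$-preimage of a coset of the centralizer of $p_r(g_1)$ in $\pi_1(K(r))$; this can be a very large subset of $G(K)$, and there are \emph{infinitely} many slopes $r\notin\mathcal{R}\cup\mathcal{E}_0\cup\mathcal{E}_1$ to avoid simultaneously. A countable union of such sets can cover the countable group $G(K)$, so a Baire-category heuristic has no content here, and residual finiteness by itself gives no control either. Theorem~\ref{S_K_hyperbolic} only tells you that for each fixed $w$ the set $\mathcal{S}_K(g_0g_1^{\,w})$ is finite, not that its excess over $\mathcal{R}$ can be made empty; iterating your construction on the new excess slopes has no visible termination. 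To close the argument one would need a uniform statement in the spirit of the relatively hyperbolic Dehn filling theorems of Osin and Groves--Manning (that $p_r$ is injective on a prescribed finite subset of $G(K)$ for all but finitely many $r$), which would reduce the problem to finitely many bad slopes that could then be fed back into Theorem~\ref{separation}; none of this appears in your proposal.

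For comparison: the present paper does not prove Theorem~\ref{realization_exceptional} at all; it is imported from \cite{IMT_realization_IMT}, where the separation theorem (Theorem~\ref{separation_hyperbolic}) is described as ``the key step.'' So your starting point agrees with the authors' indicated strategy, but the passage from ``$\mathcal{R}\subset\mathcal{S}_K(g)\subset\mathbb{Q}-\mathcal{S}$ for prescribed finite $\mathcal{S}$'' to the exact equality $\mathcal{S}_K(g)=\mathcal{R}$ is precisely where the substance of that paper's argument lies, and it is the part your proof leaves open.
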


\medskip 

\begin{example}
Let $K_2$ be the $(-2, 3, 7)$--pretzel knot depicted by Figure~\ref{realization_examples} $($Right$)$. 
The set of exceptional surgery slopes is known to be $\{ 16, 17, 18, \frac{37}{2}, 19, 20 \}$. 
Then we have an element  $g \in G(K_2)$ with 
$\mathcal{S}_{K_2}(g) = \{ 16, 17, 18, \frac{37}{2}, 19, 20 \}$. 
\end{example}

\begin{figure}[htb]
\centering
\includegraphics[width=0.6\textwidth]{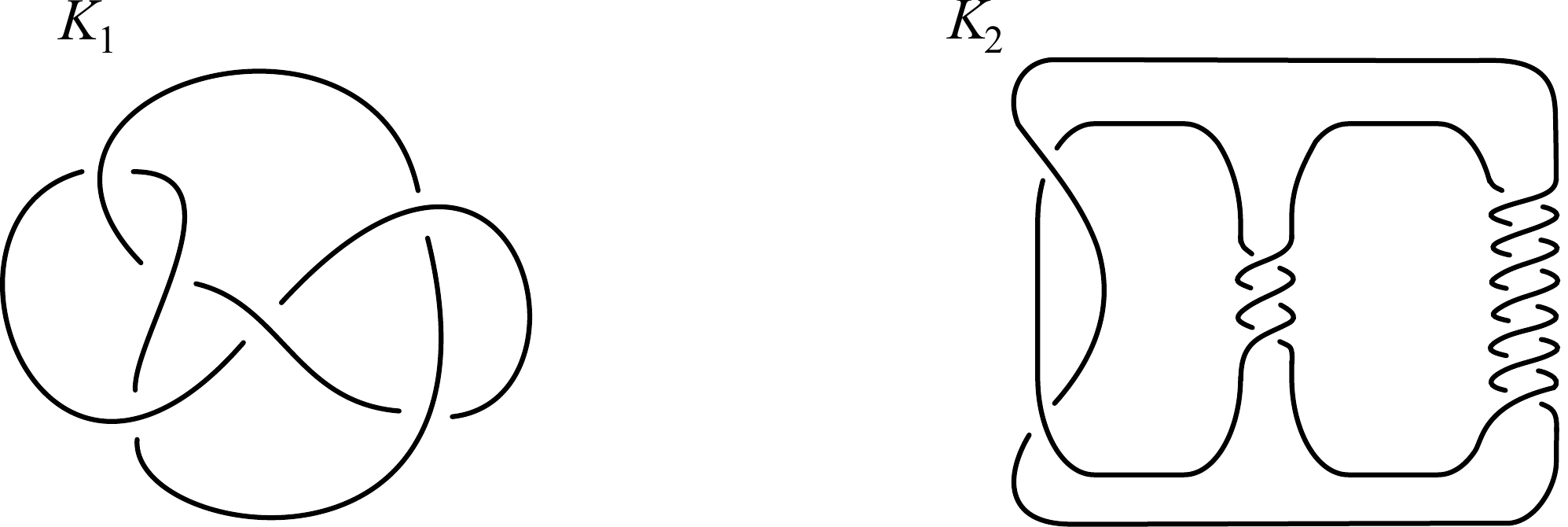}
\caption{The hyperbolic knot $K_1$ without exceptional surgery and the $(-2, 3, 7)$--pretzel knot $K_2$ } 
\label{realization_examples}
\end{figure}

\medskip
We close this section with the following future problem.

\medskip

\begin{problem}
\label{satellite}
Establish the Realization Property for non-hyperbolic knots under appropriate condition. 
\end{problem}

\section{Residual finiteness of $3$--manifold groups and Dehn filling trivializing slope sets}
\label{section:Residual-finiteness}

For any hyperbolic knot $K$, if $\mathcal{S}_K(g)$ is infinite, then Theorem~\ref{S_K_hyperbolic} shows that $g = 1$. 

\medskip

\begin{question}
\label{identical}
If $|\mathcal{S}_K(g)|$ is sufficiently large, 
then is an element $h$ satisfying $\mathcal{S}_K(h) = \mathcal{S}_K(g)$ conjugate to $g$ or some power of $g$?
\end{question}

\medskip

Using the residual finiteness of $3$--manifold groups, 
we will prove the following. 

\medskip

\begin{thm_same_trivializing_set}
%\label{same_trivialization}
Let $K$ be a non-trivial knot. 
For any non-trivial element $g \in G(K)$, we have 
\[
\mathcal{S}_K(g) = \mathcal{S}_K(g^{g^{\alpha}} g^{-2})\ \textrm{for any}\ \alpha \in G(K).
\]
\end{thm_same_trivializing_set}

\medskip

If $g \not\in [G(K), G(K)]$, 
then $g^{g^{\alpha}} g^{-2}$ is homologous to $g^{-1}$, 
hence it is not conjugate to $g^n$ ($n \ne -1$).

This result is quite general, 
but it is not so convenient to see that varying elements $\alpha$ we may actually obtain 
non-conjugate elements $g^{g^{\alpha}} g^{-2}$. 
See Proposition~\ref{conjugacy}. 
It should be interesting to compare Theorem~\ref{same_trivializing_set} with 
\cite[Theorem~1.12]{IMT_realization_IMT}.

The proof of Theorem~\ref{same_trivializing_set} requires both the residual finiteness of $3$--manifold groups and existence of non-residually finite one relator groups. 

We begin by recalling some definitions. 
A group $G$ is \textit{residually finite\/} if for each non-trivial element $g$ in $G$, 
there exists a normal subgroup of finite index not containing $g$.  
This is equivalent to say that for every $1\neq g \in G$ there exists a homomorphism 
$\varphi \colon G \rightarrow F$ to some finite group $F$ such that $\varphi(g)\neq 1$. 

Let $\mathcal{F}(G)$ be the set of elements of $G$ which is mapped to the trivial element for every finite group $F$ and every homomorphism $\varphi \colon G \rightarrow F$. Then, by definition, $G$ is residually finite if and only if $\mathcal{F}(G)=\{1\}$. 

For $3$--manifolds, we have:

\medskip

\begin{theorem}[\cite{Hem_residual_finite_IMT,Pe1_IMT,Pe2_IMT,Pe3_IMT}]
\label{residually_finite_3mfd}
The fundamental group of every compact $3$--manifold is residually finite.
\end{theorem}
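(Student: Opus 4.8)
The plan is to follow Hempel \cite{Hem_residual_finite_IMT} and reduce the assertion to the Geometrization Theorem, which is supplied by Perelman \cite{Pe1_IMT,Pe2_IMT,Pe3_IMT}. At the outset I may assume $M$ is orientable: passing to the orientation double cover replaces $\pi_1(M)$ by a subgroup of index at most $2$, and residual finiteness is closed under finite extensions (given $1\neq g\in G$ lying in a residually finite finite-index $H\trianglelefteq G$, a finite-index subgroup $N\trianglelefteq H$ with $g\notin N$ has finite-index normal core $N_G\subseteq N$ in $G$, and $g\notin N_G$). First I would invoke the prime (connected sum) decomposition $M=M_1\#\cdots\#M_k$ of Kneser and Milnor. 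By van Kampen $\pi_1(M)$ is the free product $\pi_1(M_1)*\cdots*\pi_1(M_k)$, the $\mathbb{S}^2\times\mathbb{S}^1$ summands contributing only free factors $\mathbb{Z}$. Since residual finiteness is preserved under free products (Gruenberg), it suffices to treat prime $M$.

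For prime $M$ I would apply the torus (JSJ) decomposition of Jaco--Shalen and Johannson: cutting along a canonical finite family of disjoint incompressible tori presents $M$ as a union of geometric pieces, each either Seifert fibered or atoroidal, where by Geometrization every atoroidal piece admits a finite-volume hyperbolic structure. This exhibits $\pi_1(M)$ as the fundamental group of a finite graph of groups whose vertex groups are the fundamental groups of the geometric pieces and whose edge groups are the peripheral $\mathbb{Z}\oplus\mathbb{Z}$'s carried by the splitting tori. The next step is to verify that each vertex group is residually finite. A hyperbolic piece has $\pi_1$ a finitely generated subgroup of $\mathrm{PSL}_2(\mathbb{C})$, hence linear and residually finite by Malcev's theorem; a Seifert fibered piece has $\pi_1$ a central extension of a $2$--orbifold group by $\mathbb{Z}$, and the remaining closed model geometries $\mathbb{S}^3$, $\mathbb{S}^2\times\mathbb{R}$, $\mathrm{Nil}$, $\mathrm{Sol}$, $\widetilde{\mathrm{SL}_2}$ yield finite or virtually polycyclic groups, all of which are residually finite.

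Finally I would assemble the pieces. Here residual finiteness of the total group does \emph{not} follow from residual finiteness of the vertex groups alone, and this gluing step is the technical heart. What one needs is that each peripheral edge group $\mathbb{Z}\oplus\mathbb{Z}$ is separable in the two adjacent vertex groups and, crucially, that the profinite topologies the two sides induce on a shared splitting torus agree, so that finite quotients of neighbouring pieces can be matched up along the tori. Granting this, for a prescribed nontrivial $g\in\pi_1(M)$ one reads off its normal form in the graph of groups and amalgamates compatible finite quotients of the pieces into a single finite quotient of $\pi_1(M)$ in which $g$ survives. The main obstacle is precisely this compatibility: the peripheral separability and the matching of finite quotients across the JSJ tori, which is the substance of Hempel's argument \cite{Hem_residual_finite_IMT}. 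Everything else is either elementary group theory or is packaged into the citation of Geometrization.
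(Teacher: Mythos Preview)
The paper does not supply a proof of this theorem at all: it is stated as a cited result, with the references to Hempel and Perelman indicating that Hempel's argument reduces the question to Geometrization, which Perelman then established. There is therefore nothing in the paper to compare your argument against.

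That said, your outline is the standard one and is essentially what a reader following those citations would reconstruct. A couple of small points you may wish to tighten. First, the statement concerns \emph{compact} (not necessarily closed) $3$--manifolds, so you should say a word about boundary: cap off sphere boundary components, and observe that an orientable irreducible compact $3$--manifold with nonempty non-spherical boundary is Haken, which is exactly the case Hempel treats directly. Second, the JSJ decomposition applies to \emph{irreducible} manifolds rather than prime ones; the distinction is only $S^2\times S^1$, whose fundamental group $\mathbb{Z}$ is trivially residually finite, but the reduction should be phrased that way. Otherwise your identification of the gluing step---matching finite quotients across the JSJ tori via compatible profinite topologies on the peripheral $\mathbb{Z}\oplus\mathbb{Z}$ subgroups---as the technical heart of Hempel's contribution is exactly right.
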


\medskip

Let $F_n=\langle a_1,a_2,\ldots, a_n\rangle$ be the free group of rank $n  > 1$ generated by $a_1,a_2,\ldots, a_n$.
For an element $w \in F_n$, 
which we regard as a reduced word over $\{a_1^{\pm 1},\ldots, a_n^{\pm 1}\}$ as usual, 
let us write
 \[ G_w := F_n \slash \langle\!\langle w \rangle \! \rangle = \langle a_1,a_2,\ldots, a_n\ | \: w \rangle,
  \]
which is the one-relator group whose relator is $w$. 
The natural projection of $F_n$ to $G_w$ is denoted by 
\[
\pi_w\colon F_n \rightarrow G_w. 
\]

\medskip

\begin{theorem}
\label{residually-finite}
Assume that we have two non-trivial elements $v$ and $w$ in $F_n$ such that 
\[
\pi_w(v) \in \mathcal{F}(G_w)- \{ 1 \}.
\] 
Then for any homomorphism $\phi \colon F_n \to G(K)$, 
we have 
\[
\mathcal{S}_K(\phi(w)) \subset \mathcal{S}_K(\phi(v)).
\] 
\end{theorem}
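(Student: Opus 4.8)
The plan is to fix a slope $r \in \mathcal{S}_K(\phi(w))$ and show $r \in \mathcal{S}_K(\phi(v))$, from which the inclusion of the two sets follows immediately. By the definition of the trivializing slope set, the hypothesis $r \in \mathcal{S}_K(\phi(w))$ says precisely that $\phi(w) \in \langle\!\langle r \rangle\!\rangle$, equivalently $p_r(\phi(w)) = 1$ in $\pi_1(K(r))$. First I would form the composite $\psi := p_r \circ \phi \colon F_n \to \pi_1(K(r))$. Since $\psi(w) = p_r(\phi(w)) = 1$ and $\ker \psi$ is normal, the whole normal closure $\langle\!\langle w \rangle\!\rangle$ lies in $\ker \psi$, so $\psi$ factors through the one-relator quotient $G_w = F_n / \langle\!\langle w \rangle\!\rangle$: there is a homomorphism $\bar\psi \colon G_w \to \pi_1(K(r))$ with $\psi = \bar\psi \circ \pi_w$. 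In particular $p_r(\phi(v)) = \psi(v) = \bar\psi(\pi_w(v))$, so the theorem reduces to proving $\bar\psi(\pi_w(v)) = 1$.

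The key step, and really the only content, is the claim that any homomorphism $f \colon G_w \to H$ into a residually finite group $H$ kills $\mathcal{F}(G_w)$. To see this, suppose $x \in \mathcal{F}(G_w)$ and $f(x) \neq 1$. Residual finiteness of $H$ supplies a homomorphism $\rho \colon H \to F$ onto a finite group $F$ with $\rho(f(x)) \neq 1$. But $\rho \circ f \colon G_w \to F$ is a homomorphism to a finite group, so the defining property of $\mathcal{F}(G_w)$ forces $(\rho \circ f)(x) = 1$, a contradiction. Hence $f(\mathcal{F}(G_w)) = \{1\}$.

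Finally I would apply this observation with $H = \pi_1(K(r))$ and $f = \bar\psi$. Here the substantive input is Theorem~\ref{residually_finite_3mfd}: $K(r)$ is a closed, hence compact, $3$--manifold, so $\pi_1(K(r))$ is residually finite. Since $\pi_w(v) \in \mathcal{F}(G_w)$ by hypothesis, the observation yields $\bar\psi(\pi_w(v)) = 1$, that is $p_r(\phi(v)) = 1$, so $\phi(v) \in \langle\!\langle r \rangle\!\rangle$ and $r \in \mathcal{S}_K(\phi(v))$. As $r$ was an arbitrary slope in $\mathcal{S}_K(\phi(w))$, this establishes $\mathcal{S}_K(\phi(w)) \subset \mathcal{S}_K(\phi(v))$. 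Note that $\mathcal{S}_K$ ranges only over $r \in \mathbb{Q}$, and for every such $r$ the filled manifold $K(r)$ is compact, so no exceptional slope needs separate treatment.

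There is no serious obstacle once the factorization is in place; the entire force of the argument is borrowed from the two external inputs. The one genuinely deep ingredient is the residual finiteness of $3$--manifold groups; the group-theoretic manipulation is elementary. The point requiring the most care is the direction of the implication: it is the triviality of the \emph{relator} image $\phi(w)$ under $p_r$ that makes $p_r \circ \phi$ factor through $G_w$, and it is exactly this factorization that lets the hypothesis ``$\pi_w(v)$ lies in the finite-residual kernel of $G_w$'' transport triviality from $\phi(w)$ to $\phi(v)$, and not the reverse.
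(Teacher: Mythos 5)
Your proposal is correct and follows essentially the same argument as the paper: factor $p_r\circ\phi$ through $G_w$ using $p_r(\phi(w))=1$, then invoke residual finiteness of $\pi_1(K(r))$ (Theorem~\ref{residually_finite_3mfd}) to conclude that the induced map kills $\pi_w(v)\in\mathcal{F}(G_w)$, hence $p_r(\phi(v))=1$. The only cosmetic difference is that you isolate the statement ``any homomorphism from $G_w$ to a residually finite group annihilates $\mathcal{F}(G_w)$'' as a standalone observation, whereas the paper runs the same contradiction inline.
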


\begin{proof}
If $\mathcal{S}_K(\phi(w)) = \emptyset$, 
then nothing to prove. 
So we assume $\mathcal{S}_K(\phi(w)) \ne \emptyset$. 
Let $s$ be a slope in $\mathcal{S}_K(\phi(w))$. 
Since $p_s(\phi(w))=1$, 
there is a homomorphism $\psi\colon G_w \to  \pi_1(K(s))$ that makes the following diagram commutative.
\[
\xymatrix{
F_n \ar[r]^-{\phi} \ar[d]_{\pi_w} &  G(K) \ar[d]^{p_s}\\
G_w=F_n \slash \langle\!\langle w \rangle \! \rangle \ar[r]^-{\psi} & \pi_1(K(s))
}
\]
Assume to the contrary that $s \not \in \mathcal{S}_K(\phi(v))$, so $p_s(\phi(v)) \neq 1$. 
By the residual finiteness of $\pi_1(K(s))$ there exists a homomorphism $\kappa\colon \pi_1(K(s)) \to F$ to a finite group $F$ such that $\kappa(p_s(\phi(v))) = \kappa(\psi(\pi_w(v))) \neq 1$. 
However, this implies that $\kappa \circ \psi\colon G_w \rightarrow F$ satisfies $\kappa \circ \psi(\pi_w(v)) \neq 1$. 
This contradicts the assumption that $\pi_w(v) \in \mathcal{F}(G_w) - \{1\}$. 
\end{proof}

To exploit Theorem~\ref{residually-finite}, we need a non-residually finite one-relator group $G_w$ and an element in $\mathcal{F}(G_w) - \{1\}$. Although the most famous and the simplest example of non-residually finite one-relator group is the Baumslag--Solitar group \cite{BaumslagSolitar_IMT}, here we use a group given by Baumslag--Miller--Troeger \cite{BMT_IMT}, because for this relator $w$, $\mathcal{F}(G_w)-\{1\}$ contains an element in a quite simple form.

\medskip

\begin{theorem}[\cite{BMT_IMT}]
\label{theorem:BMT_IMT}
Let $u, v \in F_n=\langle a_1,a_2,\ldots, a_n \rangle$ be elements of free group of rank $n >1$ such that $uv \neq vu$. 
Let us take
\[
w = v^{v^{u}}v^{-2} \in F_n, 
\]
and a one-relator group 
\[
G_{w} = \langle a_1,a_2,\ldots, a_n \mid  w \rangle. 
\]
Then $G_{w}$ is not residually finite, 
and for the natural projection
$\pi_{w} \colon F_n \rightarrow G_{w}$, 
\[
\pi_{w}(v) \in \mathcal{F}(G_{w}) - \{ 1 \}.
\] 
\end{theorem}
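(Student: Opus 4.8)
The plan is to separate the statement into two parts: (A) the non-triviality $\pi_w(v)\neq1$ in $G_w$, and (B) the fact that $\pi_w(v)$ dies in every finite quotient, i.e. $\pi_w(v)\in\mathcal{F}(G_w)$. Granting both, $G_w$ is automatically non-residually finite, since $\pi_w(v)$ is then a non-trivial element that no finite quotient separates from $1$, and the displayed conclusion $\pi_w(v)\in\mathcal{F}(G_w)-\{1\}$ is exactly (A) and (B) combined. I would prove (B) first, as it is elementary, and then tackle (A).

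For (B), let $\varphi\colon G_w\to F$ be any homomorphism to a finite group and set $\bar v=\varphi(\pi_w(v))$ and $\bar u=\varphi(\pi_w(u))$. Since $\pi_w$ kills $w$, the relation $w=1$ yields $\bar v^{\,\bar v^{\bar u}}=\bar v^{2}$ in $F$. Writing $b=\bar v^{\bar u}=\bar u^{-1}\bar v\bar u$, the element $b$ is conjugate to $\bar v$, hence has the same order $m:=\mathrm{ord}(\bar v)$, and the relation reads $b^{-1}\bar v b=\bar v^{2}$. Iterating conjugation gives $b^{-k}\bar v b^{k}=\bar v^{2^{k}}$ for all $k\ge0$; taking $k=m$ (so $b^{m}=1$) yields $\bar v=\bar v^{2^{m}}$, hence $m\mid 2^{m}-1$. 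If $m>1$, let $p$ be the smallest prime dividing $m$; then $2^{m}\equiv1\pmod p$, so the multiplicative order of $2$ modulo $p$ divides both $m$ and $p-1$. As every prime factor of $m$ is at least $p$ while every prime factor of $p-1$ is smaller than $p$, this common order is $1$, forcing $2\equiv1\pmod p$ and so $p\mid1$, a contradiction. Hence $m=1$ and $\bar v=1$; as $\varphi$ was arbitrary, $\pi_w(v)\in\mathcal{F}(G_w)$.

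The hard part is (A), and here the hypothesis $uv\neq vu$ enters. I would first record that in $F_n$ the elements $v$ and $v^{u}$ commute if and only if $u$ and $v$ do, so $uv\neq vu$ forces $\langle v,v^{u}\rangle$ to be free of rank $2$ on $v,v^{u}$. The idea is then to exhibit a necessarily non-residually-finite group $\Gamma$ together with a homomorphism $\psi\colon F_n\to\Gamma$ with $\psi(w)=1$ but $\psi(v)\neq1$; such a $\psi$ certifies $v\notin\langle\!\langle w\rangle\!\rangle$ and hence $\pi_w(v)\neq1$. A natural candidate is the iterated HNN extension $\Gamma=\langle x,t,s\mid t^{-1}xt=x^{2},\ s^{-1}xs=t\rangle$, obtained from the Baumslag--Solitar group $BS(1,2)=\langle x,t\mid t^{-1}xt=x^{2}\rangle$ \cite{BaumslagSolitar_IMT} by adjoining a stable letter $s$ conjugating $\langle x\rangle$ to $\langle t\rangle$. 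In $\Gamma$ the element $t=x^{s}$ is a conjugate of $x$ and satisfies $x^{x^{s}}=x^{t}=x^{2}$, so sending $v\mapsto x$ and $u\mapsto s$ sends $v^{u}\mapsto t$ and $w=v^{v^{u}}v^{-2}\mapsto x^{2}x^{-2}=1$; moreover $BS(1,2)$ is the base of the HNN extension $\Gamma$, hence embeds by Britton's lemma, and $x$ has infinite order there, so $\psi(v)=x\neq1$.

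The remaining, genuinely delicate point is to produce a homomorphism $\psi\colon F_n\to\Gamma$ realizing $\psi(v)=x$ and $\psi(u)=s$ for \emph{arbitrary} non-commuting $u,v$. When $u,v$ can be taken as part of a free basis of $F_n$ this is immediate, since the basis images may be assigned freely; but for general words $u,v$ the subgroup $\langle u,v\rangle$ need be neither a free factor nor a retract of $F_n$, so no such $\psi$ into this particular $\Gamma$ need exist, and $v\notin\langle\!\langle w\rangle\!\rangle$ cannot be read off from the rank-two relator alone (one checks that $\langle\!\langle w\rangle\!\rangle\cap\langle u,v\rangle$ may strictly exceed the normal closure of $w$ inside $\langle u,v\rangle$). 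Overcoming this is precisely the substance of the argument of Baumslag--Miller--Troeger \cite{BMT_IMT}, who build a non-residually-finite witness adapted to the given $u,v$ in which $v$ persists while $v^{v^{u}}=v^{2}$ holds. I would therefore run the clean witness argument above to settle the basis case and invoke \cite{BMT_IMT} for the general non-triviality; together with (B) this yields $\pi_w(v)\in\mathcal{F}(G_w)-\{1\}$ and the failure of residual finiteness of $G_w$.
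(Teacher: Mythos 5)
The paper gives no proof of this statement at all---it is imported verbatim from Baumslag--Miller--Troeger \cite{BMT_IMT}---so there is no internal argument to measure you against; your write-up is a supplement to a bare citation. Your part (B) is a complete and correct proof that $\pi_w(v)\in\mathcal{F}(G_w)$: in any finite quotient the relation gives $b^{-k}\bar v b^{k}=\bar v^{2^k}$ with $b$ conjugate to $\bar v$, hence $m\mid 2^m-1$ for $m=\mathrm{ord}(\bar v)$, and the smallest-prime-divisor argument (note $2^m-1$ is odd, so $m$ and hence $p$ are odd and $2$ is invertible mod $p$) forces $m=1$; this is the standard elementary half of the theorem. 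Your part (A) is correct in the case where $u,v$ extend to a free basis of $F_n$: the group $\Gamma=\langle x,t,s\mid t^{-1}xt=x^2,\ s^{-1}xs=t\rangle$ is a genuine HNN extension of $BS(1,2)$ (both associated subgroups $\langle x\rangle$, $\langle t\rangle$ are infinite cyclic), so $x\neq 1$ survives, and the assignment $v\mapsto x$, $u\mapsto s$ kills $w$ but not $v$; for $n=2$ this is exactly Baumslag's classical example $\langle x,s\mid x^{x^s}=x^2\rangle$. You also correctly identify the one genuine gap: for arbitrary non-commuting words $u,v$ the assignment $v\mapsto x$, $u\mapsto s$ need not extend to a homomorphism of $F_n$, and non-triviality of $v$ modulo the normal closure of $w$ taken in $\langle u,v\rangle\cong F_2$ does not transfer to the normal closure taken in $F_n$; establishing $\pi_w(v)\neq 1$ in that generality is precisely the substantive content of \cite{BMT_IMT}, which you, like the paper, ultimately cite. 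So the proposal contains no errors, proves strictly more than the paper does on its own, but is not an independent proof of the full statement---the general non-triviality still rests on the external reference.
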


\medskip

Applying Theorem~\ref{residually-finite} in this special case, we get the following.

\medskip

\begin{corollary}
\label{same_trivialization_residually_finite}
Let $u, v \in F_n=\langle a_1,a_2,\ldots, a_n \rangle$ be elements of free group of rank $n >1$ such that $uv \neq vu$. 
Take $w = v^{v^u}v^{-2}$. 
Let $\phi \colon F_n \rightarrow G(K)$ be a homomorphism. 
Then for $g = \phi(v)$ and $h = \phi(w)$, 
we have   $\mathcal{S}_K(g) = \mathcal{S}_K(h)$.
\end{corollary}

\begin{proof}
By the choice of elements $u, v, w$, 
following Theorem~\ref{theorem:BMT_IMT} we have 
\[
\pi_w(v) \in \mathcal{F}(G_{w}) - \{ 1 \}
\]
for the natural projection $\pi_w \colon F_n \to G_w$.

Then apply Theorem~\ref{residually-finite} to see that 
\[
\mathcal{S}_K(h) = \mathcal{S}_K(\phi(w)) \subset  \mathcal{S}_K(\phi(v)) = \mathcal{S}_K(g).
\] 

Conversely, for a slope $s$, 
if $p_s(g)=p_s(\phi(v))=1$, 
then 
\[
p_s(h)= p_s(\phi(v^{v^u}v^{-2}))=1
\]
so $\mathcal{S}_K(g) \subset \mathcal{S}_K(h)$.
\end{proof}

\medskip

\begin{proof}[Proof of Theorem~\ref{same_trivializing_set}]
Take $F_2=\langle a_1, a_2 \rangle$, $v=a_1$ and $u =a_2$, 
and let $\phi \colon F_2  \rightarrow G(K)$ be a homomorphism given by 
$\phi(v)=g$ and $\phi(u)= \alpha$, where $\alpha \in G(K)$ is chosen arbitrarily.
Then by Corollary~\ref{same_trivialization_residually_finite}, 
$h = \phi(v^{v^u}v^{-2}) = g^{g^{\alpha}} g^{-2}$ satisfies 
$\mathcal{S}_K(h) = \mathcal{S}_K(g)$. 
\end{proof}

\medskip
This construction can be repeated. 
For every $\alpha_1, \alpha_2, \ldots \in G(K)$, 
let us define $g_i$ inductively as $g_{i+1} = g_{i}^{g_i^{\alpha_i}}g_{i}^{-2}$, where $g_0 = g$.  
Then for each $i$, $\mathcal{S}_K(g_i) = \mathcal{S}_K(g)$.  

\bigskip

Note that $\alpha$ can be taken arbitrarily, 
so we may expect to obtain mutually non-conjugate elements $g^{g^{\alpha}} g^{-2}$ by varying $\alpha$. 
In Proposition~\ref{conjugacy} below, 
we introduce conjugacy invariants $r(g, \alpha) \in \mathbb{C}$ for $g^{g^{\alpha}} g^{-2}$ (depending upon $g$ is non-peripheral or peripheral) which enjoy the following property: 
for a given element $g \in G(K)$, 
if $g^{g^{\alpha}} g^{-2}$ is conjugate to $g^{g^{\alpha'}} g^{-2}$, 
then $r(g, \alpha) = r(g, \alpha')$.

\medskip

We denote an element in
$\mathrm{PSL}(2, \mathbb{C}) = \mathrm{SL}(2, \mathbb{C})/ \{ \pm I \}$ 
by 
$\begin{bmatrix}
a & b \\
c & d
\end{bmatrix}$
to distinguish from 
a matrix 
$\begin{pmatrix}
a & b \\
c & d
\end{pmatrix}$ 
in $\mathrm{SL}(2, \mathbb{C})$.
Then 
$\mathrm{tr}\begin{bmatrix}
a & b \\
c & d
\end{bmatrix}$ 
is understood to be $\pm(a+ d)$.

\medskip

\begin{proposition}
\label{conjugacy}
Let $K$ be a hyperbolic knot and $g, \alpha$ non-trivial elements of $G(K)$. 
Then we have the following. 

\begin{enumerate}
\renewcommand{\labelenumi}{(\arabic{enumi})}
\item
Assume that $g$ is non-peripheral. 
Let $\rho \colon G(K) \to \mathrm{PSL}_2(\mathbb{C})$ be a holonomy representation with 
$\rho(g) =  \begin{bmatrix}
	\zeta & 0 \\[2pt]
	0 & \zeta^{-1}
\end{bmatrix}$ and 
$\rho(\alpha) =  \begin{bmatrix}
	x & y \\[2pt]
	z & u
\end{bmatrix}$.
Then 
$r(g, \alpha) = (\zeta - \zeta^{-1})^2  (xu)^2 - (\zeta - \zeta^{-1})(xu) -1$ \(up to sign\) is an invariant 
of conjugacy class 
of $g^{g^{\alpha}}g^{-2}$. 

\item
Assume that $g$ is peripheral. 
Let $\rho \colon G(K) \to \mathrm{PSL}_2(\mathbb{C})$ be a holonomy representation with 
$\rho(g) 
=  \begin{bmatrix}
	1 & \zeta \\[2pt]
	0 & 1
\end{bmatrix}$ 
and 
$\rho(\alpha) 
=  \begin{bmatrix}
	x & y \\[2pt]
	z & u
\end{bmatrix}$. 
Then 
$r(g, \alpha) = 2z^4 \zeta^4 +2$ \(up to sign\) is an invariant of conjugacy class 
of $g^{g^{\alpha}}g^{-2}$. 
In particular, 
if $2z^4 \zeta^4 +2 \ne \pm 2$, 
then $g^{g^{\alpha}}g^{-2}$ is not peripheral. 
\end{enumerate}

\end{proposition}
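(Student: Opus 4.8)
The plan is to read off $r(g,\alpha)$ from the trace of $\rho(g^{g^{\alpha}}g^{-2})$ and to exploit the elementary fact that the trace (defined up to sign in $\mathrm{PSL}(2,\mathbb{C})$) is constant on conjugacy classes. Since $K$ is hyperbolic, its complement carries a complete hyperbolic structure, and by Mostow--Prasad rigidity it has a holonomy representation $\rho \colon G(K) \to \mathrm{PSL}(2,\mathbb{C})$ that is discrete and faithful, unique up to conjugation. If $g^{g^{\alpha}}g^{-2}$ and $g^{g^{\alpha'}}g^{-2}$ are conjugate in $G(K)$, then their $\rho$--images are conjugate in $\mathrm{PSL}(2,\mathbb{C})$ and hence share the same trace, so once we verify that $\mathrm{tr}\,\rho(g^{g^{\alpha}}g^{-2})$ equals the asserted expression, the invariance statement follows immediately. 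Thus the entire content of the proposition is the evaluation of this single trace in a convenient normal form.

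First I would normalize $\rho(g)$. Because $\rho$ is discrete and faithful and $G(K)$ is torsion free, $\rho(g)$ has no elliptic part; in a one--cusped hyperbolic manifold the parabolic elements are precisely the conjugates of the peripheral subgroup. Hence when $g$ is non--peripheral, $\rho(g)$ is loxodromic and we may conjugate so that $\rho(g) = \begin{bmatrix} \zeta & 0 \\ 0 & \zeta^{-1}\end{bmatrix}$, whereas when $g$ is peripheral $\rho(g)$ is parabolic and we may conjugate so that $\rho(g) = \begin{bmatrix} 1 & \zeta \\ 0 & 1\end{bmatrix}$. In either case write $\rho(\alpha) = \begin{bmatrix} x & y \\ z & u \end{bmatrix}$, choosing an $\mathrm{SL}(2,\mathbb{C})$--lift so that $xu - yz = 1$.

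Next I would carry out the matrix computation. Setting $C := \rho(\alpha)^{-1}\rho(g)\rho(\alpha)$, the word $g^{g^{\alpha}}g^{-2}$ (using $a^b = b^{-1}ab$ and $g^{\alpha} = \alpha^{-1}g\alpha$) has image $C^{-1}\rho(g)\,C\,\rho(g)^{-2}$, whose trace I would expand directly. In the non--peripheral case the dependence on $\rho(\alpha)$ should collapse, after using $\det = 1$ to eliminate $yz$, onto the single combination $xu$ together with $s := \zeta - \zeta^{-1}$, yielding $s^2(xu)^2 - s(xu) - 1$ up to sign; in the peripheral case the analogous expansion should produce $2z^4\zeta^4 + 2$ up to sign. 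The fact that only $xu$ (respectively $z$) survives is consistent with the residual gauge freedom: the chosen normal form for $\rho(g)$ is preserved by conjugation by diagonal matrices, under which $xu$ and $z$ are unchanged up to a scalar absorbed by the sign ambiguity, so the expressions are genuinely well defined; in any case, being values of a trace, they are automatically invariant.

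Finally, for the ``in particular'' clause of $(2)$, I would note that any peripheral element of $G(K)$ is sent by $\rho$ to a parabolic (or the identity), whose trace is $\pm 2$; therefore if $g^{g^{\alpha}}g^{-2}$ were peripheral we would have $r(g,\alpha) = \pm 2$, and the hypothesis $2z^4\zeta^4 + 2 \neq \pm 2$ forces $g^{g^{\alpha}}g^{-2}$ to be non--peripheral. The only real labour is the trace expansion of the third step, which is routine but error prone; the conceptual point, that the answer must be expressible through the conjugation--invariant data of $\rho(g)$ and $\rho(\alpha)$, both guides the algebra and guarantees that the resulting $r(g,\alpha)$ is the desired conjugacy invariant.
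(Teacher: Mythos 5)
Your overall strategy (normalize $\rho(g)$, use that the trace in $\mathrm{PSL}(2,\mathbb{C})$ is a conjugacy invariant, expand the trace of $\rho(g^{g^{\alpha}}g^{-2})$) is exactly the paper's, and it goes through verbatim for part (2): there the trace really is $\pm(2c^2\zeta^2+2)=\pm(2z^4\zeta^4+2)$, and the ``in particular'' clause follows as you say. But in part (1) your plan hinges on the claim that the trace ``collapses onto'' $(\zeta-\zeta^{-1})^2(xu)^2-(\zeta-\zeta^{-1})(xu)-1$ up to sign, and that identity is false. Writing $\rho(g^{\alpha})=\begin{bsmallmatrix} a & b \\ c & d\end{bsmallmatrix}$, the actual computation gives
\[
\mathrm{tr}\,\rho(g^{g^{\alpha}}g^{-2})=\pm\Bigl(ad\bigl((\zeta+\zeta^{-1})-(\zeta^{3}+\zeta^{-3})\bigr)+(\zeta^{3}+\zeta^{-3})\Bigr),
\]
an \emph{affine} function of $ad$ whose coefficients depend on $\zeta$; and $ad=-(\zeta-\zeta^{-1})^2(xu)^2+(\zeta-\zeta^{-1})(xu)+1=-r(g,\alpha)$. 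So the trace is not $\pm r(g,\alpha)$, and if you carried out your step 3 literally you would find the expansion does not match the asserted formula.

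The missing idea is the intermediate extraction step: one must show that the coefficient $(\zeta+\zeta^{-1})-(\zeta^{3}+\zeta^{-3})$ is nonzero --- which requires a small argument (if it vanished then $\zeta^{2}+\zeta^{-2}=2$, so $\zeta^{2}=1$ and $\rho(g)=I$, contradicting faithfulness of the holonomy representation and $g\neq 1$) --- so that $ad$, and hence $r(g,\alpha)=-ad$, is determined by the trace together with $\zeta$, and is therefore the conjugacy invariant claimed. Without this step your argument establishes only that the trace is an invariant, not that the particular quantity $r(g,\alpha)$ in the statement is. (A further caveat, which the paper itself glosses over: the $\mathrm{PSL}_2$ sign ambiguity interacts with the affine relation, so ``up to sign'' needs a little care here.) The rest of your write-up, including the remark on the residual diagonal gauge freedom and the parabolic-trace criterion for peripherality, is fine.
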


\begin{proof}
(1)\ 
Put $\gamma = g^{\alpha} = \alpha^{-1} g \alpha$. 

Write
$\rho(\gamma) = 
\begin{bmatrix}
	a & b \\[2pt]
	c & d
\end{bmatrix}$.  
Then 
\[
\rho(g^{g^{\alpha}} g^{-2}) = \rho(\gamma^{-1} g \gamma g^{-2}) 
= 
\begin{bmatrix}
	ad \zeta^{-1} - bc\zeta^{-3} & bd \zeta^3 - bd \zeta \\[2pt]
	-ac\zeta^{1} + ac \zeta^{-3} & -bc \zeta^3 + ad \zeta
\end{bmatrix}.
\]

So we have 
\[
\mathrm{tr}\rho(g^{g^{\alpha}} g^{-2}) 
= \pm\left( ad\left( (\zeta + \zeta^{-1}) - (\zeta^3 + \zeta^{-3})\right) + (\zeta^3 + \zeta^{-3})\right).
\]  
Note that $(\zeta + \zeta^{-1}) - (\zeta^3 + \zeta^{-3}) \ne 0$. 
Suppose for a contradiction that $\zeta + \zeta^{-1} = \zeta^3 + \zeta^{-3} = (\zeta + \zeta^{-1})(\zeta^2  -1 + \zeta^{-2})$.
Then $\zeta^2 + \zeta^{-2} = 2$, which shows that $\zeta^2 = 1$. 
$\rho(g) = I$, hence $g$ is trivial. 
This is a contradiction.
Therefore $ad$ is a conjugacy invariant of $g^{g^{\alpha}} g^{-2}$. 
To express $ad$ by $\rho(\alpha)$, let us write 
$\rho(\alpha) 
= 
\begin{bmatrix}
	x & y \\[2pt]
	z & u
\end{bmatrix}
$.

Then $\rho(\gamma) = \rho(\alpha^{-1} g \alpha) 
= 
\begin{bmatrix}
	xu\zeta - yz \zeta^{-1} & yu\zeta - yu\zeta^{-1} \\[2pt]
	-xz \zeta + xz \zeta^{-1} & -ya \zeta + xu \zeta^{-1}
\end{bmatrix}
=
\begin{bmatrix}
	a & b \\[2pt]
	c & d
\end{bmatrix}$.

So  we have 
\[
ad = (xu\zeta - yz\zeta^{-1})(xu\zeta^{-1} - yz\zeta) = 
-(\zeta - \zeta^{-1})^2 (xu)^2 + (\zeta - \zeta^{-1})(xu) +1. 
\] 

Note that $\zeta - \zeta^{-1} \ne 0$, 
for otherwise $\zeta^2 = 1$ and $\rho(g) = I$, and hence $g = 1$, a contradiction.
Since $\alpha$ is arbitrarily chosen, 
we expect there are infinitely many elements $\alpha \in G(K)$ such that 
$xu$, and hence $ad$ take infinitely many values. 

\medskip

(2)\ 
Put $\gamma = g^{\alpha} = \alpha^{-1} g \alpha$. 

Write
$\rho(\gamma) = 
\begin{bmatrix}
	a & b \\[2pt]
	c & d
\end{bmatrix}$ as above. 
Then 
\[
\rho(g^{g^{\alpha}} g^{-2}) = \rho(\gamma^{-1} g \gamma g^{-2}) 
= 
\begin{bmatrix}
	1+cd \zeta & - 2\zeta - 2cd \zeta^2 +d^2 \zeta \\[2pt]
	-c^2 \zeta  & 2c^2 \zeta^2 + 1-cd \zeta
\end{bmatrix}.
\]

So we have 
\[
\mathrm{tr}\rho(g^{g^{\alpha}} g^{-2}) 
= \pm (2c^2 \zeta^2  + 2).
\]  
To express this by $\rho(\alpha)$ 
we write 
$\rho(\alpha) 
= 
\begin{bmatrix}
	x & y \\[2pt]
	z & u
\end{bmatrix}$.

Then $\rho(\gamma) = \rho(\alpha^{-1} g \alpha) 
= 
\begin{bmatrix}
	1+zu\zeta  & u^2\zeta \\[2pt]
	 -z^2\zeta & 1-zu\zeta 
\end{bmatrix}
=
\begin{bmatrix}
	a & b \\[2pt]
	c & d
\end{bmatrix}$. 

This implies 
\[
2c^2 \zeta^2  + 2
=
2z^4 \zeta^4  + 2. 
\]

Since $\alpha$ is arbitrarily chosen, 
we expect there are infinitely many elements $\alpha \in G(K)$ such that 
$2z^4 \zeta^4 + 2$ 
takes infinitely many values up to sign. 

Recall that $g \in G(K)$ is peripheral if and only if $\mathrm{tr}\rho(g) = \pm 2$. 
Thus if $2z^4 \zeta^4 + 2 \ne \pm 2$, 
then $g^{g^{\alpha}} g^{-2}$ is not peripheral. 
\end{proof}

\medskip

\begin{example}
\label{figure-eight_holonomy}
Let $K$ be the figure-eight knot. 
Take a meridian $\mu$ and a non-trivial element $h$ as depicted in Figure~\ref{figure-eight_mu_h}. 

\begin{figure}[htb]
\centering
\includegraphics[bb=0 0 121 113,width=0.2\textwidth]{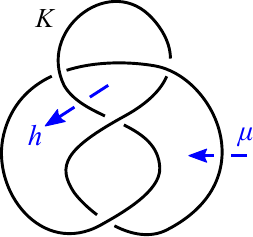}
\caption{$\mu$ and $h$ generate $G(K)$.} 
\label{figure-eight_mu_h}
\end{figure}

Let us take a holonomy representation 
$\rho \colon G(K) \to \mathrm{PSL}_2(\mathbb{C})$ so that 
$\rho(\mu) = 
\begin{bmatrix}
	1 & 1 \\[2pt]
	0 & 1
\end{bmatrix}$ and 
$\rho(h) = \begin{bmatrix}
	1 & 0 \\[2pt]
	-\omega & 1
\end{bmatrix}$, where $\omega = \frac{-1 + \sqrt{3}\,i}{2}$; see \cite{CallahanReid_IMT,Goda_IMT}. 

Then $\lambda = h\mu^{-1}h^{-1}\mu^2 h^{-1}\mu^{-1}h$, and 
$\rho(\lambda) = 
\begin{bmatrix}
	1 &2\sqrt{3}\,i \\[2pt]
	0 & 1
\end{bmatrix}$.  
For a slope element $\mu^p \lambda^q$, 
we have
$\rho(\mu^p \lambda^q) = 
\begin{bmatrix}
	1 & p + 2\sqrt{3}\,qi \\[2pt]
	0 & 1
\end{bmatrix}$. 

For simplicity put $g = \mu^p \lambda^q$. 
Then it follows from Theorem~\ref{same_trivializing_set} that 
\[
\mathcal{S}_K(g) = \mathcal{S}_K(g^{g^{h^n}} g^{-2})\quad \textrm{for any integer}\ n > 0.
\]
Now we apply Proposition~\ref{conjugacy} to see that 
$g^{g^{h^m}} g^{-2}$ is conjugate to $g^{g^{h^n}} g^{-2}$ for integers $m, n > 0$ if and only if $m = n$.

Note that 
$\rho(h^n) = \begin{bmatrix}
	1 & 0 \\[2pt]
	-n\omega & 1
\end{bmatrix}$, 
and $\zeta$ in Proposition~\ref{conjugacy}(2) is  $p + 2\sqrt{3}q i$. 
Then Proposition~\ref{conjugacy}(2) shows that 
$g^{g^{h^m}} g^{-2}$ is conjugate to $g^{g^{h^n}} g^{-2}$ if and only if 
$2 (-m\omega)^4 \zeta^4 + 2 = 2 (-n\omega)^4 \zeta^4 + 2$, i.e. $m = n$. 
\end{example}

\section{Separation Property}
\label{section:separation}

\subsection{Separation of two slopes}

Examples~\ref{torus_knot_cyclic}
and \ref{pretzel_non_separation} show that even Separation Property (i) does not hold in general. 
On the other hand, 
in Example~\ref{torus_knot_cyclic} the slope $r_0$ is a {\em finite surgery slope}, 
i.e. $\pi_1(K(r_0))$ is finite, 
and in Example~\ref{pretzel_non_separation} the slope $18$ is a finite surgery slope. 
In fact we have:

\medskip

\begin{lemma}[\cite{IMT_Magnus_IMT}]
\label{inclusion}
Let $K$ be a non-trivial knot. 
Assume that $\langle \! \langle r \rangle\!\rangle \subset \langle \! \langle s \rangle\!\rangle$. 
Then $s = r$ or $s$ is a finite surgery slope. 
\end{lemma}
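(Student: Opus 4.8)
The plan is to translate the group-theoretic inclusion into a statement about the image of the peripheral subgroup after $s$--Dehn filling, and then to eliminate the infinite case by geometric means. First I would observe that $\langle\!\langle r \rangle\!\rangle \subseteq \langle\!\langle s \rangle\!\rangle$ is equivalent to $p_s(\gamma_r) = 1$, where $\gamma_r$ is a slope element representing $r$. Write $K(s) = E(K) \cup_{\partial} V$ with $V$ the filling solid torus and let $c$ be its core, so that $K(s) \setminus c \cong \operatorname{int} E(K)$. The inclusion $\partial V \hookrightarrow V$ sends $\gamma_s$ to $1$ and the dual curve to the core, so the image of the peripheral subgroup $P(K)$ in $\pi_1(K(s))$ is the cyclic group generated by $\widehat{c} := p_s(c)$; moreover a primitive peripheral class $\gamma_r$ maps to $\widehat{c}^{\,\pm \Delta(r,s)}$, where $\Delta$ is the distance appearing in Theorem~\ref{cyclic_surgery}. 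Hence $p_s(\gamma_r) = \widehat{c}^{\,\pm \Delta(r,s)}$, and the hypothesis becomes $\widehat{c}^{\,\pm \Delta(r,s)} = 1$. If $\Delta(r,s) = 0$ then $r = s$ and we are in the first alternative; otherwise $\Delta(r,s) \ge 1$ and $\widehat{c}$ has finite order.

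It then remains to show that $\widehat{c}$ having finite order forces $\pi_1(K(s))$ to be finite. The key extra input is that $\pi_1(K(s)) = \langle\!\langle p_s(\mu) \rangle\!\rangle$ is normally generated by the meridian (since $G(K) = \langle\!\langle \mu \rangle\!\rangle$), and that $p_s(\mu)$ lies in $\langle \widehat{c} \rangle$, hence is a torsion element. I would first rule out reducibility: if $K(s)$ were reducible, the result of Gordon--Luecke quoted after Theorem~\ref{S3} provides a lens space connected summand, so $\pi_1(K(s))$ is a nontrivial free product $A * B$ with $A, B \neq 1$. But a nontrivial free product admits no single torsion element as a normal generator: a finite-order element is conjugate into a factor by the Kurosh theorem, and the normal closure of a proper free factor is proper, while the trivial element generates only the trivial group. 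This contradiction shows that $K(s)$ is irreducible.

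Finally, I would suppose for contradiction that $\pi_1(K(s))$ is infinite. A closed orientable irreducible $3$--manifold with infinite fundamental group is aspherical: by the sphere theorem its universal cover is a non-compact simply connected $3$--manifold with vanishing $\pi_2$, hence contractible, so $\pi_1(K(s))$ is torsion-free. The torsion element $\widehat{c}$ is therefore trivial, whence $p_s(\mu) = 1$ and $\pi_1(K(s)) = \langle\!\langle p_s(\mu) \rangle\!\rangle = 1$, contradicting infiniteness. Thus $\pi_1(K(s))$ is finite, i.e. $s$ is a finite surgery slope, completing the dichotomy.

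The main obstacle is exactly this last step eliminating the infinite case: one must pass from the purely algebraic condition ``the core is torsion'' to the geometric conclusion that $\pi_1(K(s))$ is finite, and being normally generated by a torsion element is far from sufficient in general. This is where irreducibility (via Gordon--Luecke together with the Kurosh theorem) and asphericity (via the sphere theorem, giving torsion-freeness) do the real work, and isolating precisely these inputs is the crux of the argument.
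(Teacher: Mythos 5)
Your argument is correct, and it is essentially the proof given in the cited source \cite{IMT_Magnus_IMT} (the present paper only quotes the statement without proof): one reduces to the observation that the core of the filling solid torus becomes torsion of order dividing $\Delta(r,s)$, and then combines normal generation of $\pi_1(K(s))$ by the meridian with the Gordon--Luecke reducibility result and the standard asphericity/torsion-freeness argument for irreducible $3$--manifolds with infinite fundamental group to force $\pi_1(K(s))$ to be finite. I see no gaps.
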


\medskip

Combining this lemma with the peripheral Magnus property (Theorem~\ref{peripheral_Magnus}), 
we immediately obtain: 

\medskip

\begin{theorem}
\label{separation_two_slopes}
Let $K$ be a non-trivial knot. 
For two given distinct slopes $r$ and $s$ in $\mathbb{Q}$, 
if $s$ is not a finite surgery slope, 
then there exists an element $g \in G(K)$ such that 
$r \in \mathcal{S}_K(g)$ and $s \not\in \mathcal{S}_K(g)$.
\end{theorem}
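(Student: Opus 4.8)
The plan is to reduce the existence of the separating element $g$ to a single non-inclusion of normal closures, and then to read that non-inclusion straight off Lemma~\ref{inclusion}. Unwinding Definition~\ref{D}, the requirement $r \in \mathcal{S}_K(g)$ means exactly $g \in \langle\!\langle r \rangle\!\rangle$, while $s \notin \mathcal{S}_K(g)$ means $g \notin \langle\!\langle s \rangle\!\rangle$. Hence producing the desired $g$ is equivalent to exhibiting an element of the set-theoretic difference $\langle\!\langle r \rangle\!\rangle \setminus \langle\!\langle s \rangle\!\rangle$, which is possible precisely when $\langle\!\langle r \rangle\!\rangle \not\subseteq \langle\!\langle s \rangle\!\rangle$. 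So the entire theorem is equivalent to the lone assertion $\langle\!\langle r \rangle\!\rangle \not\subseteq \langle\!\langle s \rangle\!\rangle$.

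First I would establish this non-inclusion by contradiction. Suppose instead that $\langle\!\langle r \rangle\!\rangle \subseteq \langle\!\langle s \rangle\!\rangle$. Lemma~\ref{inclusion} then forces one of two alternatives: either the slopes coincide (equivalently $\langle\!\langle r \rangle\!\rangle = \langle\!\langle s \rangle\!\rangle$) or $s$ is a finite surgery slope. The latter is excluded outright by the standing hypothesis that $s$ is not a finite surgery slope. To dispatch the former I would invoke the peripheral Magnus property (Theorem~\ref{peripheral_Magnus}): should the lemma deliver the equality $\langle\!\langle r \rangle\!\rangle = \langle\!\langle s \rangle\!\rangle$, Theorem~\ref{peripheral_Magnus} upgrades it to $r = s$, contradicting the assumption that $r$ and $s$ are distinct. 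Either branch produces a contradiction, so $\langle\!\langle r \rangle\!\rangle \not\subseteq \langle\!\langle s \rangle\!\rangle$.

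Finally, fixing any $g \in \langle\!\langle r \rangle\!\rangle \setminus \langle\!\langle s \rangle\!\rangle$, which is nonempty by the previous step, yields the required element: it lies in $\langle\!\langle r \rangle\!\rangle$ and outside $\langle\!\langle s \rangle\!\rangle$, that is, $r \in \mathcal{S}_K(g)$ and $s \notin \mathcal{S}_K(g)$.

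Since the substantive content — that a containment of one slope's normal closure inside another can only occur when the slopes agree or the ambient slope is a finite surgery slope — is already packaged into Lemma~\ref{inclusion}, there is essentially no remaining obstacle. The only point that wants care is the bookkeeping that keeps equality of slopes distinct from equality of normal closures, and this is exactly what the peripheral Magnus property resolves. In this sense the theorem is a purely formal consequence of the two cited results, which is why it can be asserted as following immediately.
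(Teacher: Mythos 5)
Your proof is correct and follows essentially the same route as the paper: both reduce the statement to the non-inclusion $\langle\!\langle r \rangle\!\rangle \not\subseteq \langle\!\langle s \rangle\!\rangle$, which is obtained by combining Lemma~\ref{inclusion} with the peripheral Magnus property (Theorem~\ref{peripheral_Magnus}) under the hypotheses $r \neq s$ and $s$ not a finite surgery slope, and then pick any element of the difference.
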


\begin{proof}
Since $r \ne s$, by Theorem~\ref{peripheral_Magnus} $\langle\!\langle r \rangle\!\rangle \ne  \langle\!\langle s \rangle\!\rangle$. 
Furthermore, since $s$ is not a finite surgery slope,  $\langle\!\langle r \rangle\!\rangle \not\subset  \langle\!\langle s \rangle\!\rangle$.
Hence, we have a non-trivial element $g \in \langle\!\langle r \rangle\!\rangle - \langle\!\langle s \rangle\!\rangle$. 
This means that $r \in \mathcal{S}_K(g)$, but $s \not\in \mathcal{S}_K(g)$, and $g$ is a desired element. 
\end{proof}

Note that if 
$\langle\!\langle r \rangle\!\rangle \subset  \langle\!\langle s \rangle\!\rangle$ for distinct slopes $r$ and $s$, 
then $r$ is not a finite surgery slope \cite[Proposition~2.4]{IMT_Magnus_IMT}. 
So in the case where $s$ is a finite surgery slope in Theorem~\ref{separation_two_slopes}, 
we have an element $g$ such that $s \in \mathcal{S}_K(g)$, but $r \not\in \mathcal{S}_K(g)$. 

\bigskip

\subsection{Separation of two finite families of slopes}

In this subsection we will prove Theorem~\ref{separation}. 
We may apply the proof of Theorem~\ref{separation_hyperbolic}.  
Note that \cite[Claim~6.1]{IMT_realization_IMT} is true for non-trivial knots, because a satellite knot has at most one reducing surgery; 
see \cite{Sch_red_IMT}. 
(Actually if $K$ is a satellite knot admitting a reducing surgery, 
then it is a cable of a non-trivial knot and the surgery slope is the cabling slope.)
Then the argument is verbatim except for the first step. 
In the first step, we need to see that 
for the given finite subset $\mathcal{S}$ of $\mathbb{Q}$ which does not contain a Seifert surgery, 
there is an element $g$ such that $\mathcal{S}_K(g) \subset \mathbb{Q} - \mathcal{S}$. 
Recall that a finite surgery is a Seifert surgery. 
In what follows, we will prove the existence of such an element when $\mathcal{S}$ does not contain a finite surgery. 

\medskip

\begin{proposition}
\label{non-trivial}
Let $K$ be a non-trivial knot and $\mathcal{S}$ a finite subset of $\mathbb{Q}$ which does not contain a finite surgery. 
Then there are infinitely many, mutually non-conjugate elements $x$ in $[G(K), G(K)]$ 
such that $p_s(x) \ne 1$ for all slopes $s \in \mathcal{S}$. 
\end{proposition}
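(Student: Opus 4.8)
The plan is to reformulate the conclusion purely group--theoretically and then to separate the \emph{existence} problem from the \emph{non-conjugacy} problem. Write $\Gamma = [G(K), G(K)]$ and, for each $s \in \mathcal{S}$, set $M_s = \langle\!\langle s \rangle\!\rangle \cap \Gamma$. The requirement that $p_s(x) \ne 1$ for all $s \in \mathcal{S}$ says exactly that $x \in \Gamma - \bigcup_{s \in \mathcal{S}} M_s$. First I would produce a single such element by a covering argument, and then upgrade to infinitely many pairwise non-conjugate ones by passing to suitable powers.

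For the existence step the key point is that each $M_s$ has infinite index in $\Gamma$. Restricting the filling epimorphism $p_s$ to $\Gamma$ gives $p_s(\Gamma) = [\pi_1(K(s)), \pi_1(K(s))]$ with kernel $M_s$, so $\Gamma / M_s \cong [\pi_1(K(s)), \pi_1(K(s))]$. Since $s$ is not a finite surgery slope, $\pi_1(K(s))$ is infinite, while $H_1(K(s)) \cong \mathbb{Z}_{|p|}$ is finite; hence its commutator subgroup has finite index in an infinite group and is itself infinite. Thus $\Gamma / M_s$ is infinite, i.e. $[\Gamma : M_s] = \infty$. Now B.~H.~Neumann's lemma says that a group is never the union of finitely many subgroups of infinite index, so $\Gamma \ne \bigcup_{s \in \mathcal{S}} M_s$ and we may choose $x_0 \in \Gamma - \bigcup_{s \in \mathcal{S}} M_s$. (When $\mathcal{S} = \emptyset$ this is vacuous and any non-trivial $x_0 \in \Gamma$ works, $\Gamma$ being non-trivial because $K$ is non-trivial.)

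For the non-conjugacy step, recall that $G(K)$ is torsion-free (the exterior is aspherical), so $x_0$ has infinite order and every power $x_0^k$ again lies in $\Gamma$. As $\mathcal{S}$ is finite, only finitely many of the images $p_s(x_0)$ have finite order; letting $N$ be the least common multiple of those orders, the arithmetic progression $T = \{ 1 + jN : j \ge 0 \}$ is infinite and satisfies $p_s(x_0^k) = p_s(x_0)^k \ne 1$ for every $s \in \mathcal{S}$ and every $k \in T$. It then remains to see that the elements $\{ x_0^k : k \in T \}$ are mutually non-conjugate in $G(K)$. For this I would invoke the structure of $3$--manifold groups: by geometrization they contain no Baumslag--Solitar subgroup $BS(m, n)$ with $|m| \ne |n|$, which forces an infinite-order element of $G(K)$ to be non-conjugate to any distinct positive power of itself. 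Hence $x_0^k$ and $x_0^{k'}$ are conjugate only if $k = k'$, yielding infinitely many pairwise non-conjugate elements of $\Gamma$, all surviving every filling in $\mathcal{S}$.

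The soft part of the argument is the existence of one surviving element, which is a pure covering statement (Neumann's lemma) needing only that the relevant quotients be infinite; here residual finiteness is not even used. The genuine obstacle is the non-conjugacy: passing to powers is elementary, but proving that distinct positive powers of $x_0$ are never conjugate cannot be achieved by residual finiteness alone (the residually finite group $BS(1, n)$ shows that $x \sim x^n$ is possible in general), and really uses that $G(K)$ is a $3$--manifold group. For a hyperbolic knot one could instead distinguish the powers, or the iterates $g^{g^{\alpha}} g^{-2}$ of Theorem~\ref{same_trivializing_set}, by the holonomy invariants of Proposition~\ref{conjugacy}, but the uniform statement for all non-torus knots is cleanest via the absence of such Baumslag--Solitar subgroups.
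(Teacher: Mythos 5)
Your route is genuinely different from the paper's, which splits into hyperbolic, torus and satellite cases via geometrization, produces explicit elements such as $[x,y]$ or $[\tau_a,\tau_b]\lambda_K^{\ell}$, and proves non-conjugacy by cyclically reduced words in amalgamated free products and by quasimorphisms/scl. Your existence step via B.~H.~Neumann's covering lemma is attractive and essentially sound, with one slip: the slope $s=0$ is allowed in $\mathcal{S}$ (it is never a finite surgery slope for a non-trivial knot), and there $H_1(K(0))\cong\mathbb{Z}$ is infinite, so you cannot conclude that $[\pi_1(K(s)),\pi_1(K(s))]$ has finite index. You must argue separately that $[\pi_1(K(0)),\pi_1(K(0))]$ is infinite, e.g.\ because $K(0)$ is irreducible and contains a closed incompressible surface of genus $g(K)\ge 1$ by Gabai, so $\pi_1(K(0))$ is not virtually cyclic. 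That is fixable.

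The genuine gap is the non-conjugacy step, and it carries the full weight of the proposition: since each $M_s$ is normal in $G(K)$, the set $\Gamma-\bigcup_{s}M_s$ is conjugation-invariant, so Neumann's lemma gives infinitely many elements but says nothing about the number of conjugacy classes; everything therefore rests on the claim that $x_0^{k}$ and $x_0^{k'}$ are non-conjugate for distinct $k,k'\in T$. Your justification does not establish this. First, a relation $w^{-1}x_0^{k}w=x_0^{k'}$ only exhibits $\langle x_0,w\rangle$ as a \emph{quotient} of $BS(k,k')$, not a subgroup isomorphic to it, so ``$G(K)$ contains no $BS(m,n)$ with $|m|\ne|n|$'' does not apply directly. (It does work for $BS(1,n)$, since every proper quotient of $BS(1,n)$ forces the base generator to have finite order, but that is not the case you need.) Second, your intermediate assertion --- that an infinite-order element is not conjugate to a distinct positive power of \emph{itself} --- does not even address exponents with neither dividing the other, e.g.\ the possibility $x_0^{2}\sim x_0^{3}$, which is precisely what must be excluded. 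The fact you want, that $g^{m}$ conjugate to $g^{n}$ in a knot group forces $|m|=|n|$, is true, but it is a theorem about aspherical Haken $3$--manifolds (conjugate powers and divisibility, in the line of Evans--Jaco and Jaco--Shalen, or a JSJ-plus-translation-length argument) and must be proved or correctly cited; as written this step is a gap, and as you note yourself it cannot be closed by residual finiteness alone.
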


\medskip

To prove Proposition~\ref{non-trivial} we consider three cases according as Thurston's hyperbolization theorem: 
$K$ is a hyperbolic knot (Propositions~\ref{hyperbolic_persistent_[G,G]_noncyclic}), 
a torus knot (Proposition~\ref{persistent_[G,G]_torus}) or a satellite knot (Propositions~\ref{persistent_[G,G]_satellite} and \ref{cable_torus}). 

%Proposition~\ref{non-trivial} follows from Propositions~\ref{hyperbolic_persistent_[G,G]_noncyclic}, \ref{persistent_[G,G]_torus}, \ref{persistent_[G,G]_satellite} and \ref{cable_torus} below. 
%Actually except for Proposition~\ref{cable_torus} we will prove stronger result. 

Since $K$ is assumed to be non-trivial, 
any cyclic surgery slope $s$ is a finite surgery slope. 
Because, if $\pi_1(K(s)) \cong \mathbb{Z}$, 
then $K$ is the unknot and $s = 0$ \cite{GabaiIII_IMT}.  

%According to Thurston's hyperbolization theorem, 
%knots are classified into three classes depending upon topology of their exteriors: 
%torus knots, hyperbolic knots, and satellite knots. 

For hyperbolic knots we have already established the following result \cite{IMT_realization_IMT}. 

\medskip

\begin{proposition}[\cite{IMT_realization_IMT}]
\label{hyperbolic_persistent_[G,G]_noncyclic}
Let $K$ be a hyperbolic knot in $S^{3}$.  
Then there exist infinitely many, mutually non-conjugate elements $g \in [G(K),G(K)]$ 
such that $p_s(g) \neq 1$ in $\pi_1(K(s))$ for all non-cyclic surgery slopes $s \in \mathbb{Q}$. 
In particular, 
if $K$ has no cyclic surgery slope, then there exist infinitely many, mutually non-conjugate elements $g \in [G(K),G(K)]$ such that 
$\mathcal{S}_K(g) = \emptyset$. 
\end{proposition}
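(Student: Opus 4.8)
The plan is to reduce the statement to the construction of a single element and then multiply it up. Let $C\subset\mathbb{Q}$ denote the set of cyclic surgery slopes; since $K$ is hyperbolic (hence not a torus knot), the Cyclic Surgery Theorem~\ref{cyclic_surgery} shows that $C$ is finite, and for a non-trivial knot every cyclic surgery is in fact a finite surgery. The condition ``$p_s(g)\neq 1$ for all non-cyclic surgery slopes $s$'' is precisely the condition $\mathcal{S}_K(g)\subseteq C$, so it suffices to produce one non-trivial, non-peripheral element $g_0\in[G(K),G(K)]$ with $\mathcal{S}_K(g_0)\subseteq C$. The ``in particular'' clause is then immediate (if $C=\emptyset$ then $\mathcal{S}_K(g_0)=\emptyset$), and infinitely many mutually non-conjugate examples come for free from the bootstrapping of Section~\ref{section:Residual-finiteness}: letting $\alpha$ range over a one-parameter family (e.g.\ powers $\beta^{n}$ of a fixed loxodromic $\beta$, as in Example~\ref{figure-eight_holonomy}), the elements $g_0^{g_0^{\alpha}}g_0^{-2}$ all lie in $[G(K),G(K)]$ (each is homologous to $g_0^{-1}$) and all satisfy $\mathcal{S}_K(g_0^{g_0^{\alpha}}g_0^{-2})=\mathcal{S}_K(g_0)\subseteq C$ by Theorem~\ref{same_trivializing_set}, while the conjugacy invariant $r(g_0,\alpha)$ of Proposition~\ref{conjugacy}(1) takes infinitely many values, so infinitely many of them are mutually non-conjugate.

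The construction of $g_0$ is where the hyperbolic geometry enters. Fix the holonomy $\rho_0\colon G(K)\to\mathrm{PSL}_2(\mathbb{C})$ and choose $g_0\in[G(K),G(K)]$ that is non-trivial and non-peripheral (any element of the commutator subgroup that is not a power of the longitude will do); in a torsion-free Kleinian group of finite covolume such an element is loxodromic, so $\mathrm{tr}\,\rho_0(g_0)\neq\pm 2$. By the Finiteness Theorem~\ref{S_K_hyperbolic} the set $\mathcal{S}_K(g_0)$ is automatically finite, so only finitely many slopes can cause trouble. To dispose of the ``large'' slopes I would work on the canonical curve $X_0$ of the character variety through $\chi_0=\mathrm{tr}\,\rho_0$: the trace function $\chi\mapsto\mathrm{tr}_{g_0}(\chi)$ is regular on $X_0$ with $\mathrm{tr}_{g_0}(\chi_0)\neq\pm 2$, and by Thurston's hyperbolic Dehn surgery theorem all but finitely many fillings $K(s)$ are hyperbolic, with holonomy characters $\chi_s\in X_0$ converging to $\chi_0$. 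Hence $\mathrm{tr}_{g_0}(\chi_s)\to\mathrm{tr}_{g_0}(\chi_0)\neq\pm 2$, so $\rho_s(p_s(g_0))\neq I$ and $s\notin\mathcal{S}_K(g_0)$ for every sufficiently large slope.

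What remains, and what I expect to be the crux, is the finitely many non-cyclic slopes left over: the exceptional (non-hyperbolic) fillings together with the finitely many small hyperbolic fillings for which the limiting estimate is not yet effective. One cannot appeal to a single geometric limit here, and a direct separation argument is awkward because Theorem~\ref{separation_hyperbolic} lets one avoid a \emph{prescribed} finite set, whereas the finite bad set of a \emph{given} $g_0$ is not known in advance. My plan is to promote the choice of $g_0$ to a choice from a family and to detect it filling-by-filling on the exceptional pieces: each non-cyclic filling has non-cyclic fundamental group (a Seifert piece with non-cyclic base orbifold group, a toroidal piece containing $\mathbb{Z}^{2}$, a non-cyclic finite or small Seifert group, or, if a reducible filling should occur, a non-trivial free product with a lens-space summand), so by residual finiteness (Theorem~\ref{residually_finite_3mfd}) together with the non-triviality of these groups one can arrange a detecting representation. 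Imposing these finitely many local detection conditions simultaneously, and using the separation control of Theorem~\ref{separation_two_slopes} to keep $\mathcal{S}_K(g_0)$ disjoint from this finite set of non-cyclic slopes, should force $\mathcal{S}_K(g_0)\subseteq C$. Reconciling the global geometric choice of $g_0$ (needed for the large slopes) with the finitely many local conditions (needed for the exceptional slopes) is the main technical obstacle, and it is the step in which the hyperbolicity of $K$, rather than mere non-triviality, is used in full.
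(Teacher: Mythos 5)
You have correctly identified where the difficulty lies, but you have not resolved it, so the proposal is incomplete. (Note also that the paper itself does not prove this proposition; it is imported from \cite{IMT_realization_IMT}, so the only basis for comparison is the surrounding machinery of Sections~\ref{section:Residual-finiteness}--\ref{section:separation}.) The architecture of your argument is reasonable: reduce to producing a single non-trivial, non-peripheral $g_0\in[G(K),G(K)]$ with $\mathcal{S}_K(g_0)$ contained in the finite set $C$ of cyclic surgery slopes; dispose of all sufficiently large slopes by Thurston's hyperbolic Dehn surgery theorem and convergence of the trace function on the canonical curve; and then bootstrap to infinitely many mutually non-conjugate examples via $g_0^{g_0^{\alpha}}g_0^{-2}$, Theorem~\ref{same_trivializing_set} and Proposition~\ref{conjugacy}(1). (Even in that last step you still owe an argument that the invariant $r(g_0,\beta^{n})$ genuinely takes infinitely many values for your chosen family; the paper is careful to say only that one ``expects'' this in general and verifies it in a single example.) But the residual case --- the finitely many non-cyclic exceptional and small hyperbolic slopes --- is the substance of the proposition, and your treatment of it consists of a plan (``should force'', ``my plan is to promote the choice'') rather than a proof.

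Concretely, the two tools you invoke for that step do not do the job. Residual finiteness (Theorem~\ref{residually_finite_3mfd}) runs in the wrong direction: it certifies that a \emph{given} non-trivial element of $\pi_1(K(s_i))$ survives in some finite quotient, whereas you need to \emph{produce} one element of $[G(K),G(K)]$ whose images in $\pi_1(K(s_1)),\dots,\pi_1(K(s_k))$ are simultaneously non-trivial; detecting representations of the individual fillings say nothing about this simultaneous lifting problem. And Theorem~\ref{separation_two_slopes} is misapplied: given $r\neq s$ it manufactures a \emph{fresh} element of $\langle\!\langle r\rangle\!\rangle-\langle\!\langle s\rangle\!\rangle$; it imposes no constraint on the $g_0$ you have already fixed for the large-slope argument, handles only one bad slope at a time, and gives no control over membership in $[G(K),G(K)]$ or non-peripherality. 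What is needed is an argument that the finitely many subgroups $[G(K),G(K)]\cap\langle\!\langle s_i\rangle\!\rangle$ --- each proper in $[G(K),G(K)]$ because a non-cyclic $\pi_1(K(s_i))$, being normally generated by one element with cyclic abelianization, is non-abelian --- together with the peripheral and large-slope constraints, fail to exhaust the candidates; compare the explicit constructions in Propositions~\ref{persistent_[G,G]_torus} and \ref{cable_torus}, where a concrete commutator (suitably modified) is shown by hand to survive every relevant filling. Until that simultaneous non-vanishing is established, the proof is not complete.
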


\medskip
If $K$ has a cyclic surgery slope $s$, 
then $\pi_1(K(s)) \cong G(K)/ \langle\!\langle s \rangle\!\rangle$ is abelian, 
and $[G(K),G(K)] \subset \langle\!\langle s \rangle\!\rangle$. 
Thus for every element $g$ in $[G(K),G(K)]$, 
$p_s(g) = 1$ in $\pi_1(K(s))$. 

So in the following we consider two cases: $K$ is a torus knot, or a satellite knot.  

\medskip

\begin{proposition}
\label{persistent_[G,G]_torus}
Let $K$ be a torus knot in $S^{3}$. 
Then there exist infinitely many, mutually non-conjugate elements 
$g \in [G(K),G(K)]$ such that $p_s(g) \neq 1$ for all non-cyclic surgery slopes $s \in \mathbb{Q}$. 
\end{proposition}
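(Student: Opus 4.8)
The plan is to exploit the Seifert structure of torus knot surgeries to exhibit a single surviving commutator, and then to manufacture infinitely many mutually non-conjugate ones by feeding it into Theorem~\ref{same_trivializing_set}.

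\textbf{Setup via Moser's classification.} Write $K = T_{p,q}$ with coprime $p, q \geq 2$, so that $G(K) = \langle x, y \mid x^p = y^q\rangle$ has centre generated by the regular fibre $z = x^p = y^q$, and $G(K)/\langle z\rangle \cong \mathbb{Z}_p * \mathbb{Z}_q$. First I would recall Moser's theorem \cite{Moser_IMT}: for $s = m/n$ set $\sigma = |pqn - m|$; then $K(s)$ is a lens space when $\sigma = 1$ (cyclic, hence excluded from the statement), a connected sum $L(p,\cdot)\# L(q,\cdot)$ when $\sigma = 0$ (the reducing slope $s = pq$), and otherwise ($\sigma \geq 2$) a Seifert fibre space over the base orbifold $S^2(p,q,\sigma)$. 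The point I would record is that for every non-cyclic slope $s$ there is a surjection of $\pi_1(K(s))$ onto a non-abelian group generated by the images $\bar x, \bar y$ of $x, y$: for $\sigma = 0$ this is $\pi_1(K(pq)) = \mathbb{Z}_p * \mathbb{Z}_q$ itself, and for $\sigma \geq 2$ one kills the image of the fibre $z$ to obtain the von Dyck triangle group $\Delta(p,q,\sigma) = \langle \bar x, \bar y \mid \bar x^p = \bar y^q = (\bar x \bar y)^\sigma = 1\rangle$. Here the coprimality $\gcd(p,q) = 1$ is used to rule out the only abelian von Dyck group $(2,2,\ast)$, so every such target is genuinely non-abelian.

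\textbf{A surviving commutator.} Next I would take $g_0 = [x,y] \in [G(K),G(K)]$ and verify that it survives every non-cyclic surgery. For a non-cyclic slope $s$ the image of $g_0$ under the surjection above is $[\bar x, \bar y]$; since the target is generated by $\bar x, \bar y$ and is non-abelian, we have $[\bar x, \bar y] \neq 1$, whence $p_s(g_0) \neq 1$. Thus $\mathcal{S}_K(g_0)$ contains no non-cyclic slope.

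\textbf{Promotion to an infinite non-conjugate family.} Finally I would feed $g_0$ into Theorem~\ref{same_trivializing_set}: for any $\alpha \in G(K)$ the element $g_0^{g_0^{\alpha}} g_0^{-2}$ again lies in $[G(K),G(K)]$ and satisfies $\mathcal{S}_K(g_0^{g_0^{\alpha}} g_0^{-2}) = \mathcal{S}_K(g_0)$, so it too survives every non-cyclic surgery, and iterating produces a whole sequence of such elements. To separate conjugacy classes I would use the conjugacy invariant furnished by the action of $G(K)/\langle z\rangle = \mathbb{Z}_p * \mathbb{Z}_q$ on its Bass--Serre tree: conjugate elements of $G(K)$ have conjugate images in $\mathbb{Z}_p * \mathbb{Z}_q$, hence equal translation length (the cyclically reduced syllable length). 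The main obstacle is exactly this step, namely showing that the construction $g \mapsto g^{g^{\alpha}} g^{-2}$ does not collapse in the free product, i.e.\ that one can choose the $\alpha$ so that the cyclically reduced length of the image in $\mathbb{Z}_p * \mathbb{Z}_q$ grows without bound. Once this length estimate is in hand, infinitely many of the resulting elements are forced into distinct conjugacy classes, and the proposition follows.
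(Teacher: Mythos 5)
Your plan follows essentially the same route as the paper: the surviving element is the same commutator $g=[x,y]$ (the paper's justification is even shorter than yours --- if $p_s([x,y])=1$ then $\pi_1(K(s))$ is generated by two commuting elements, hence abelian, hence cyclic since it abelianizes to a cyclic group, so $s$ is a cyclic surgery slope; no appeal to Moser's classification or triangle groups is needed), and the infinite family is produced by exactly the same device, namely $g^{g^{\alpha}}g^{-2}$ via Corollary~\ref{same_trivialization_residually_finite}, with conjugacy classes separated by a cyclically-reduced-length invariant in a free or amalgamated product.

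The one genuine gap is the step you yourself flag as ``the main obstacle'': you never choose the conjugators $\alpha$ nor verify that the words $g^{g^{\alpha}}g^{-2}$ fail to collapse, and without that the argument produces no lower bound on the number of conjugacy classes. This is where the actual content of the second half of the proof lies, and it does not follow formally from anything you have set up --- a priori all the $g^{g^{\alpha}}g^{-2}$ could be conjugate. The paper resolves it by an explicit computation: it takes $w_n=y(xy)^{n+1}$, expands $g_n=g^{g^{w_n}}g^{-2}$ as a word in $x^{\pm1},y^{\pm1}$ inside the amalgamated product $\langle x\rangle\ast_{\langle x^p=y^q\rangle}\langle y\rangle$, checks that the result is cyclically reduced with syllable structure depending on $n$, and invokes the conjugacy criterion for amalgamated free products \cite[Theorem~4.6]{MKS_IMT}. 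Your alternative of passing to the quotient $\mathbb{Z}_p\ast\mathbb{Z}_q$ and using translation length on the Bass--Serre tree would also work, but it requires the same kind of explicit choice and computation, plus an extra check that syllables such as $y^{\pm2}$ do not become trivial in $\mathbb{Z}_q$ (an issue when $q=2$) in a way that destroys the length growth. So the plan is viable and close to the paper's, but as written it stops short of the decisive computation.
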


\begin{proof}
Recall that 
for the $(p,q)$-torus knot $K=T_{p,q}$ $(0<p<|q|)$, the knot group $G(K)$ has a presentation 
\[
\langle x,y \: | \: x^{p}=y^{q} \rangle 
 = \langle x\rangle \ast_{\langle x^{p}=y^{q} \rangle} \langle y\rangle, 
 \]
which is the amalgamated free product of two infinite cyclic groups. 
In particular, 
$G(K)$ is generated by two elements $x$ and $y$. 
Let $g = [x, y]$. 
If $p_s(g) = 1$, then $\pi_1(K(s))$ is abelian, and hence it is cyclic. 
Thus $s$ is a cyclic surgery. 
This shows that $p_s(g) \ne 1$ for any non-cyclic surgery slope $s$ of $K$. 

For $n>0$ let $w_n=y(xy)^{n+1}$ and $g_n=g^{g^{w_n}}g^{-2}$. 
Since $g \in [G(K), G(K)]$, 
so does $g_n$. 
By Corollary~\ref{same_trivialization_residually_finite}, 
$p_s(g_n)\neq 1$ for all non-cyclic surgery slopes $s$ of $K$.

Let us show that $g_n$ and $g_m$ are conjugate if and only if $n = m$.  
Note that 
\[
W_n = g^{w_n} = (y(xy)^{n+1})^{-1} g (y(xy)^{n+1})
\]
and 
\[
\begin{split}
g_n & = g^{g^{w_n}} g^{-2} = g^{W_n} g^{-2} = (W_n)^{-1} g W_n\, g^{-2} \\
& = \left( (y(xy)^{n+1})^{-1} g  (y(xy)^{n+1}) \right)^{-1} g  \left( (y(xy)^{n+1})^{-1} g  (y(xy)^{n+1}) \right)\, g^{-2}.
\end{split}
\]
Put $X=x^{-1}$, $Y=y^{-1}$ for simplicity of notation.
Then 
\[
\begin{split}
g_n &  = (y(xy)^{n+1})^{-1} [x,y]^{-1} (y(xy)^{n+1}) [x,y] (y(xy)^{n+1})^{-1}[x,y]((y(xy))^{n+1})\\
& \quad [x,y]^{-2}\\
& =
 ( (YX)^{n}YXY) (yxYX) (y(xy)^{n+1}) (xyXY)  ((YX)^{n+1}Y) (xyXY)\\
 &\quad (yxy(xy)^{n-1}xy) (yxYX)^{2}\\
& = (YX)^{n}Y^{2}X y(xy)^{n+2}  XY^{2}X(YX)^{n}Y xy^{2}(xy)^{n-1}xy^2xYXyxYX.
\end{split}
\]

The last word is cyclically reduced. 
Then it follows from \cite[Theorem~4.6]{MKS_IMT} that 
$g_n$ and $g_m$ are not conjugate. 
\end{proof}

For later convenience we note the following, 
which will be used in the proof of Proposition~\ref{cable_torus}. 

\medskip

\begin{claim}
\label{notP(K)_torus}
$g,\ g_n$ are non-peripheral in $G(K)$. 
\end{claim}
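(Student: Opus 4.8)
The plan is to detect non-peripherality after passing to the central quotient. Recall that for $K=T_{p,q}$ the group $G(K)=\langle x,y\mid x^p=y^q\rangle$ has infinite cyclic center $Z=\langle z\rangle$ with $z=x^p=y^q$ the class of the regular Seifert fiber, and $Q:=G(K)/Z\cong \mathbb{Z}_p\ast\mathbb{Z}_q$, the images $\bar x,\bar y$ having orders $p,q$. The first step is to locate the peripheral subgroup in this picture. Since $z$ is the regular fiber it is represented by a boundary curve, so $z\in P(K)$, and in the basis $(\mu,\lambda)$ one has $z=\mu^{pq}\lambda$ (the fiber has slope $pq$); hence $P(K)=\langle\mu,z\rangle$, and as $\bar z=1$ and $\bar\lambda=\bar\mu^{-pq}$, its image in $Q$ is the cyclic group $\langle\bar\mu\rangle$, where $\mu=x^ay^b$ with $aq+bp=1$ so that $\bar\mu=\bar x^a\bar y^b$. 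Consequently, if an element of $G(K)$ is peripheral then its image in $Q$ is conjugate to $\bar\mu^k$ for some $k\in\mathbb{Z}$.

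Next I would record two elementary constraints on the images $\bar g,\bar g_n$. Because $g,g_n\in[G(K),G(K)]$, both images lie in $[Q,Q]=\ker(Q\to H_1(Q))$, and $H_1(Q)\cong\mathbb{Z}_{pq}$ with $\bar\mu$ a generator. Moreover $[G(K),G(K)]\cap Z=\{1\}$ (as $z$ has infinite order in $H_1(G(K))\cong\mathbb{Z}$), so $\bar g,\bar g_n\neq1$; and a nontrivial element of $[Q,Q]$ cannot have finite order, since finite-order elements of $\mathbb{Z}_p\ast\mathbb{Z}_q$ are conjugate into a factor and hence have nontrivial image in $H_1(Q)$. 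Thus $\bar g$ and $\bar g_n$ are infinite-order elements of $Q$, and their conjugacy classes are recorded by their cyclically reduced forms \cite{MKS_IMT}. If $\bar g$ (resp.\ $\bar g_n$) were conjugate to $\bar\mu^k$, then $\bar\mu^k\in[Q,Q]$ forces $pq\mid k$, whence $k\neq0$ and the cyclically reduced syllable length of $\bar\mu^k$ is $2|k|\geq 2pq\geq12$.

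For $g=[x,y]$ this finishes matters immediately: $\bar g=[\bar x,\bar y]$ is cyclically reduced of syllable length $4$, so conjugacy to $\bar\mu^k$ would force $2|k|=4$, contradicting $2|k|\geq 2pq$. For $g_n$ I would feed in the cyclically reduced word for $g_n$ already produced in the proof of Proposition~\ref{persistent_[G,G]_torus}, reduce it modulo $Z$ to read off the cyclically reduced form of $\bar g_n$ in $Q$, and compare it with a power of $\bar\mu$. Two features can be exploited: either the syllable length of $\bar g_n$ is not divisible by $2pq$, or --- more robustly --- its cyclic word is not \emph{homogeneous}, i.e.\ its $\mathbb{Z}_p$-syllables are not all equal (one sees both $\bar x$ and $\bar x^{-1}$) or its $\mathbb{Z}_q$-syllables are not all equal, whereas every power $\bar\mu^k=(\bar x^a\bar y^b)^k$ has all $\mathbb{Z}_p$-syllables equal to $\bar x^a$ and all $\mathbb{Z}_q$-syllables equal to $\bar y^b$.

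I expect the only real obstacle to be the syllable bookkeeping in the small cases where distinct integer exponents collapse modulo $p$ or $q$ (notably $p=2$, which forces $\bar x=\bar x^{-1}$, and $q=3$, as for the trefoil, where $\bar y^2=\bar y^{-1}$): there one must check that after reduction modulo $Z$ the word $\bar g_n$ still exhibits two genuinely different syllables in one of the factors, so that it cannot coincide with the homogeneous cyclic word of any $\bar\mu^k$. Everything else is formal.
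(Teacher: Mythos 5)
Your argument is correct, but it is a genuinely different route from the paper's. The paper disposes of the claim in two lines using surgery: if $h^{-1}gh\in P(K)$ then, since $g\in[G(K),G(K)]$, the element $h^{-1}gh$ is null-homologous in $\partial E(K)$ and hence equals $\lambda_K^m$; therefore $p_0(g)=1$, so $0$ would be a cyclic surgery slope by Proposition~\ref{persistent_[G,G]_torus}, which is absurd for a non-trivial knot (\cite{GabaiIII_IMT}). That argument applies verbatim to $g_n$ because Proposition~\ref{persistent_[G,G]_torus} already covers $g_n$, and it reappears later for satellite companions. Your proof instead works combinatorially in the central quotient $Q=G(K)/\langle z\rangle\cong\mathbb{Z}_p\ast\mathbb{Z}_q$, identifying the image of $P(K)$ as $\langle\bar\mu\rangle$ and comparing cyclically reduced forms via the free-product conjugacy theorem \cite{MKS_IMT}; this is self-contained (it uses neither Proposition~\ref{persistent_[G,G]_torus} nor Gabai's theorem) at the price of normal-form bookkeeping. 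For $g=[x,y]$ your length count ($4$ versus $2|k|\geq 2pq\geq 12$) is complete. For $g_n$ you only sketch the final verification, but it does go through along the lines you indicate: the word for $g_n$ computed in the proof of Proposition~\ref{persistent_[G,G]_torus} has all $x$-syllables equal to $x^{\pm1}$ and $y$-syllables equal to $y^{\pm1}$ or $y^{\pm2}$, so with $p\geq 2$ and $|q|\geq 3$ it stays reduced (and cyclically reduced, as it begins in the $\mathbb{Z}_q$-factor and ends in the $\mathbb{Z}_p$-factor) after passing to $Q$, and its $\mathbb{Z}_q$-syllables take the two distinct values $\bar y^{\pm1}$ and $\bar y^{\pm2}$, whereas every power of $\bar\mu$ is syllable-homogeneous; the collapse you worry about in the $\mathbb{Z}_p$-factor when $p=2$ is therefore harmless. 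If you want the shortest write-up, adopt the paper's $0$-filling argument; your quotient argument is worth keeping in mind as it gives non-peripherality without invoking any Dehn surgery input.
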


\begin{proof}
We show that $g$ is not conjugate into $\pi_1(\partial E(K))$. 
Assume for a contradiction that $h^{-1} g h \in \pi_1(\partial E(K))$ for some $h \in G(K)$. 
Then since $g \in [G(K), G(K)]$, 
$h^{-1} g h$ is null-homologous, and thus it is $\lambda_K^m$ for some integer $m$. 
Then $p_0(g) = p_0(h \lambda_K^m h^{-1}) = 1$ in $\pi_1(K(0))$. 
This implies $0$ is a cyclic surgery slope, contradicting Proposition~\ref{persistent_[G,G]_torus}. 
Hence, $g$ is non-peripheral. 
The proof for $g_n$ is identical. 
\end{proof}

\medskip

Let us turn to the proof of Proposition~\ref{non-trivial} for satellite knots. 
Note that a satellite knot other than the $(2ab \pm 1, 2)$--cable of $T_{a, b}$ does not admit a cyclic surgery \cite{Wu_IMT}.  
In particular, 
knots in Proposition~\ref{persistent_[G,G]_satellite} below has no cyclic surgeries. 
As an application of Propositions~\ref{persistent_[G,G]_torus} and \ref{hyperbolic_persistent_[G,G]_noncyclic} we may obtain: 

\medskip

\begin{proposition}
\label{persistent_[G,G]_satellite}
Let $K$ be a satellite knot which is not a $(abq \pm 1, q)$-cable of the $(a, b)$--torus knot $T_{a, b}$.  
Then there exist infinitely many, mutually non-conjugate elements $x \in [G(K),G(K)]$ such that $p_s(x) \neq 1$ for all slopes $s \in \mathbb{Q}$. 
\end{proposition}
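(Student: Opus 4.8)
The plan is to manufacture the required elements inside a geometric companion and push them into $G(K)$ through the satellite structure. First I would take the outermost companion torus $T$ of $K$, bounding the tubular neighborhood $V$ of a companion knot $\hat{K}$. Since $T$ is outermost, the exterior $E(\hat{K}) = S^3 - \mathrm{int}\,V$ is atoroidal, so by geometrization $\hat{K}$ is a torus knot or a hyperbolic knot. Because $T$ is incompressible, the inclusion $E(\hat{K}) \hookrightarrow E(K)$ induces an injection $\iota \colon G(\hat{K}) \hookrightarrow G(K)$ carrying $[G(\hat{K}), G(\hat{K})]$ into $[G(K), G(K)]$. Applying the relevant one of Proposition~\ref{persistent_[G,G]_torus} and Proposition~\ref{hyperbolic_persistent_[G,G]_noncyclic} to $\hat{K}$, I would obtain infinitely many mutually non-conjugate $g_n \in [G(\hat{K}), G(\hat{K})]$ with $p^{\hat{K}}_\sigma(g_n) \neq 1$ for every non-cyclic surgery slope $\sigma$ of $\hat{K}$, where $p^{\hat K}_\sigma$ is the $\sigma$-filling epimorphism of $\hat K$, and set $x_n := \iota(g_n) \in [G(K), G(K)]$.

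Next I would establish persistence, i.e. $p_s(x_n) \neq 1$ for all $s \in \mathbb{Q}$, by a dichotomy on whether $T$ survives the filling. If $T$ is incompressible in $K(s)$, then $E(\hat{K}) \hookrightarrow K(s)$ is $\pi_1$-injective, so $p_s \circ \iota$ is injective and $p_s(x_n) = p_s(\iota(g_n)) \neq 1$. If $T$ compresses in $K(s)$, then the filled pattern exterior is a solid torus, so $K(s)$ is itself a filling $\hat{K}(\delta)$ and $p_s \circ \iota = p^{\hat{K}}_\delta$; it then suffices that $\delta$ be a non-cyclic surgery slope of $\hat{K}$. By the analysis of fillings of cable spaces (see \cite{Sch_red_IMT} for the reducible slope), this compressible case occurs only when $K$ is the $(p,q)$-cable of $\hat{K}$ and $\Delta(s, pq) \le 1$: the cabling slope gives $\delta = p/q$, and the distance-one slopes give $\delta = s/q^2$, the square of the winding number $q$ entering through the satellite longitude framing. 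A direct computation then shows $\delta$ is a cyclic (lens) surgery slope of $\hat{K}$ only when $\hat{K} = T_{a,b}$ and $p = abq \pm 1$, i.e. exactly the excluded $(abq \pm 1, q)$-cables; when $\hat{K}$ is hyperbolic, $\delta$ has denominator divisible by $q^2 \ge 4$, hence is non-integral and non-cyclic by the Cyclic Surgery Theorem. Under the hypothesis of the Proposition, $\delta$ is therefore always non-cyclic and $p_s(x_n) = p^{\hat{K}}_\delta(g_n) \neq 1$.

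To see that the $x_n$ are infinitely many and mutually non-conjugate in $G(K)$, I would invoke the companion decomposition, which presents $G(K)$ as the fundamental group of a graph of groups with $G(\hat{K})$ a vertex group and $P(\hat{K}) = \pi_1(T)$ an incident edge group. Each $g_n$ is non-peripheral in $G(\hat{K})$: were it conjugate into $P(\hat{K})$, being null-homologous it would be conjugate to a power of $\lambda_{\hat{K}}$, whence $p^{\hat{K}}_0(g_n) = 1$; but $0$ is a non-cyclic surgery slope of the non-trivial knot $\hat{K}$ (since $\hat{K}(0) \neq S^2 \times S^1$), contradicting the defining property of $g_n$. This is Claim~\ref{notP(K)_torus} when $\hat{K}$ is a torus knot, and the identical argument applies when $\hat{K}$ is hyperbolic. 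Since elements of a vertex group that are not conjugate into an edge group are conjugate in the ambient group if and only if they are conjugate in the vertex group, the mutual non-conjugacy of the $g_n$ in $G(\hat{K})$ upgrades to that of the $x_n$ in $G(K)$, and injectivity of $\iota$ keeps them distinct.

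The hard part will be the compressible case of the dichotomy: pinning down precisely which slopes make the companion torus compressible and checking that the induced companion slope $\delta$ misses the cyclic surgery slopes of $\hat{K}$. This is exactly where the hypothesis becomes sharp, since the distance-$\le 1$ cabling slopes of a torus-knot companion are matched term-for-term by the excluded family; I expect the winding-number framing computation for the distance-one slopes, together with the appeal to the Cyclic Surgery Theorem controlling $\hat{K}$, to demand the most care. A secondary delicate point is the Bass--Serre transfer of non-conjugacy, which rests entirely on the non-peripherality of the $g_n$.
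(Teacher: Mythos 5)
Your overall architecture coincides with the paper's (companion decomposition, elements supplied by Propositions~\ref{persistent_[G,G]_torus} and \ref{hyperbolic_persistent_[G,G]_noncyclic}, a case split on whether the companion torus survives the filling, and transfer of non-conjugacy via the amalgamated product), but there is a genuine gap in your compressible case. You assert that the companion torus $T$ can compress in $K(s)$ \emph{only when} $K$ is the $(p,q)$--cable of $\hat K$ and $\Delta(s,pq)\le 1$. That is false: the pattern space $V-\mathrm{int}\,N(K)$ need not be a cable space, and by Gabai's work on surgery on knots in solid tori, $V(K;s)$ can be a solid torus for non-cable patterns (e.g.\ $1$--bridge braids), so the compressible case arises for a much wider class of satellites than cables. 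For such a pattern with winding number $w$, the induced companion slope is $\delta=m/(nw^2)$ for $s=m/n$, and this fraction can reduce to an integer (e.g.\ $w=2$, $s=4$ gives $\delta=1$); your claim that $\delta$ has denominator divisible by $q^2\ge 4$ and is therefore non-cyclic by the Cyclic Surgery Theorem only works under the cable hypothesis, where $\gcd(m,q)=1$ is forced. Without it you have no control over whether $\delta$ is a cyclic surgery slope of a hyperbolic companion, and your argument for $p_s(x_n)\ne 1$ collapses exactly there.

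The missing ingredient is Wu's theorem \cite{Wu_IMT}: a satellite knot other than a $(2ab\pm1,2)$--cable of a torus knot admits no cyclic surgery, so under the hypothesis of the Proposition $\pi_1(K(s))$ is never cyclic. The paper uses this to dispatch the solid-torus case with no slope computation at all: $\pi_1(K(s))=\pi_1(\hat K(s/w^2))$ is non-cyclic, hence $s/w^2$ is a non-cyclic surgery slope of $\hat K$ and the chosen elements survive. The reducible sub-case is then handled by \cite[Corollary~1.4]{BZ_finite_JAMS_IMT} (which \emph{derives}, rather than assumes, that $K$ is a cable of a torus knot and $s$ is the cabling slope), together with injectivity of free factors and the hypothesis excluding $(abq\pm1,q)$--cables. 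If you insert Wu's theorem at the point where you need $\delta$ non-cyclic, and replace your ``only cables compress'' claim by the Gabai--Scharlemann trichotomy for $V(K;s)$, your proof becomes essentially the paper's. Your non-peripherality and Bass--Serre transfer steps are correct and match the paper's final step.
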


\begin{proof}
The reason why we exclude cable knots $(abq \pm 1, q)$-cable of $T_{a, b}$ will be clarified in Case 3 in the following proof. 
(See also Remark~\ref{cable}.)

Recall that any satellite knot $K$ has a hyperbolic knot or a torus knot as a companion knot $k$. 
Let $V$ be a tubular neighborhood of $k$ containing $K$ in its interior. 
Then $E(K) = E(k) \cup (V - \mathrm{int}N(K))$ and
$G(K)= G(k) \ast_{\pi_1(\partial V)} \pi_1(V - \mathrm{int}N(K))$.

\medskip

\noindent
\textbf{Notations:}\quad
Throughout the proof we distinguish various projections as follows: 
\begin{itemize}
\item $p^{K}_s\colon G(K) \rightarrow \pi_1(K(s))$ is the projection induced from $s$--Dehn filling on $K$, 
which we simply denote by $p_s$.
\item $p^{k}_s\colon G(k) \rightarrow \pi_1(k(s))$ is the projection induced from $s$--Dehn filling on a companion knot $k$.
\end{itemize}

For a slope $s$ of $K$ we denote by $V(K; s)$ the manifold obtained from $V$ by $s$-surgery on $K \subset V$.

\medskip

Since we have already proved Proposition~\ref{non-trivial} for torus knots and hyperbolic knots, we have:

\medskip

\begin{claim}
\label{companion}
There are infinitely many, mutually non-conjugate elements 
$x \in [G(k), G(k)] \subset [G(K), G(K)]$ such that $p^{k}_s(x)\neq 1$ for all non-cyclic surgery slopes $s \in \mathbb{Q}$ of $k$. 
\end{claim}

\medskip

Following Claim~\ref{notP(K)_torus} and \cite[Claim 5.3]{IMT_realization_IMT}, 
we may assume that these elements are not conjugate into the subgroup 
$P(k)=\pi_1(\partial V)$. 
\textit{In the following we use $x$ to denote such an element.}

Let us take a slope $s\in \mathbb{Q}$.  
Since $K$ has no cyclic surgery slope, 
$s$ is not a cyclic surgery slope of $K$. 

\medskip

\noindent
\textbf{Case 1.} $\partial V(K; s)$ is incompressible.
\smallskip

Then $\pi_1(K(s))=G(k)*_{\pi_1(\partial V(K; s))}\pi_1(V(K;s))$ is an amalgamated free product. Hence $G(k)$ injects into $\pi_1(K(s))$.
Since $x$ is non-trivial in $G(k) = \pi_1(E(k))$, 
$p_s(x)=x$ is also non-trivial in $\pi_1(K(s))$

\medskip

\noindent
\textbf{Case 2.}  $V(K; s) = S^1 \times D^2$.
\smallskip

Then $K(s) = E(k) \cup V(K; s) = k(s/w^2)$, 
where $w$ is the winding number of $K$ in $V$ \cite{Go_satellite_IMT}. 
Since $\pi_1(K(s))$ is not cyclic, neither is $\pi_1(k(s/w^2))$. 
Hence $s/w^2$ is not a cyclic surgery slope of $k$, 
and $p_{s}(x)=p^{k}_{s/w^2}(x) \ne 1 \in \pi_1(k(s/w^2)) = \pi_1(K(s))$. 

\medskip 

\noindent
\textbf{Case 3.} $\partial V(K;s)$ is compressible and $V(K; s) = (S^1 \times D^2)\, \#\, W$ for some closed $3$--manifold $W \ne S^{3}$.

\smallskip

Then $K(s) = E(k) \cup V(K; s) = k(s/w^2)\, \#\, W$ \cite{Go_satellite_IMT}.  
Since $\partial V(K;s)$ is compressible, 
\cite[Corollary~2.5]{GabaiII_IMT} and \cite{Sch_JDG_IMT} show that $w \ne 0$. 
Hence following \cite{GLu_S3_IMT}, 
$k(s/w^2) \ne S^3$ and $K(s) = k(s/w^2)\, \#\, W$ is reducible. 
Then \cite[Corollary~1.4]{BZ_finite_JAMS_IMT} shows that $K$ is a $(p, q)$--cable of a torus knot $T_{a, b}$ 
for some integers $p, q, a, b$, 
and the surgery slope $s$ is the cabling slope $pq$. 
Note that $w = q \ge 2$ and 
the companion knot $k$ is $T_{a, b}$. 

Assume first that $\pi_1(k(s/w^2))$ is not cyclic. 
Then $p^k_{s/w^2}(x) \ne 1$ in $\pi_1(k(s/w^2))$, 
which injects into $\pi_1(K(s)) = \pi_1(k(s/w^2)) \ast \pi_1(W)$. 
Hence $p_{s/w^2}(x) \ne 1$ in $\pi_1(K(s))$. 

Next assume that $\displaystyle\pi_1(k(s/w^2)) = \pi_1(T_{a, b}(pq /q^2)) = \pi_1(T_{a, b}(p/q))$ is cyclic. 
This then implies that the distance between two slopes $p/q$ and $ab$ should be one, 
i.e. $|abq - p| = 1$. 
So $K$ is a $(abq \pm 1, q)$--cable of $T_{a, b}$. 
This contradicts the initial assumption.  

\medskip

Finally we show that there are infinitely many, mutually non-conjugate elements $x \in [G(K), G(K)]$ with 
$p_s(x) \ne 1$ for all slopes $s \in \mathbb{Q}$. 
In our proof the elements $x \in G(k)$ are mutually non-conjugate in $G(k)$.  
So it is sufficient to see that such elements are still non-conjugate in $G(K)$. 
Actually this immediately follows from a fact that for the amalgamated free product $G=A*_C B$ and elements $a, a' \in A - C$, 
$a$ and $a'$ are conjugate in $G$ if and only if they are conjugate in $A$ \cite[Theorem 4.6]{MKS_IMT}. 
In our setting $G = G(K), A = G(k), B = \pi_1(V(K; s))$ and $C = P(k) = \pi_1(\partial E(k))$. 
\end{proof}

\medskip

\begin{remark}
\label{cable}
If $K$ is a $(abq \pm 1, q)$-cable of $T_{a, b}$, 
then the reducing surgery on $K$ induces a cyclic surgery on the companion knot 
$k = T_{a, b}$. 
Therefore 
for any $x \in [G(T_{a, b}), G(T_{a, b})]$, 
$p_{(abq\pm 1)q}(x) = 1$ for the reducing surgery slope $(abq\pm 1)q$. 
\end{remark}

In the remaining of this section we focus on a $(abq \pm 1, q)$--cable of $k = T_{a, b}$.  
The situation described in Remark~\ref{cable} forces us to pay further attention to take desired elements in $G(K)$. 

\medskip

\begin{proposition}
\label{cable_torus}
Let $K$ be a $(abq \pm 1, q)$--cable of the $(a, b)$--torus knot $k = T_{a, b}$ \($|q| \ge 2$\). 
Let $\mathcal{S}$ be a finite subset of $\mathbb{Q}$ which does not contain a finite surgery.  
Then there are infinitely many, mutually non-conjugate elements $x$ in $[G(K), G(K)]$ 
such that $p_s(x) \ne 1$ for all slopes $s \in \mathcal{S}$. 
\end{proposition}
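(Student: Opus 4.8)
The plan is to reduce everything to a single problematic slope, namely the cabling (= reducing) slope $s_0 = (abq\pm 1)q$, to build one commutator in $G(K)$ that survives $s_0$ together with all the remaining slopes of $\mathcal{S}$, and then to let the Baumslag--Miller--Troeger spinning of Corollary~\ref{same_trivialization_residually_finite} produce the required infinite family.

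First I would isolate the obstruction. For a slope $s$ the three cases in the proof of Proposition~\ref{persistent_[G,G]_satellite} still apply: if $\partial V(K;s)$ is incompressible then $G(k)$ injects into $\pi_1(K(s))$; if $V(K;s)$ is a solid torus then $K(s)=k(s/q^2)$; and the compressible-reducible case occurs only at $s_0$, where by Remark~\ref{cable} and the cabling formula $K(s_0)=k(p/q)\,\#\,W$ with $\pi_1(K(s_0))\cong \mathbb{Z}_{|p|}\ast\mathbb{Z}_{|q|}$, a free product of two non-trivial finite cyclic groups. Since $\gcd(p,q)=\gcd(abq\pm1,q)=1$ this group is infinite, so $s_0$ is not a finite surgery and may lie in $\mathcal{S}$. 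Crucially, any \emph{other} slope $s\in\mathcal{S}$ whose induced companion surgery $s/q^2$ is cyclic would make $K(s)=k(s/q^2)$ a lens space, i.e. a finite surgery, which is excluded by hypothesis. Hence the companion commutators furnished by Claim~\ref{companion} already dispose of every slope of $\mathcal{S}$ except possibly $s_0$; if $s_0\notin\mathcal{S}$ those elements are themselves an infinite, mutually non-conjugate family in $[G(K),G(K)]$ and finish the proof, so I would assume $s_0\in\mathcal{S}$.

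For $s_0$ I would detect the free product directly. Writing $G(K)=G(k)\ast_{P}\pi_1(C)$ with $P=\pi_1(\partial V)$ and $C=V-\mathrm{int}\,N(K)$ the cable space, I choose a non-peripheral $a\in G(k)\setminus P$ whose image $p_{s_0}(a)$ maps onto a generator of the $\mathbb{Z}_{|p|}$ factor and a non-peripheral $c\in\pi_1(C)\setminus P$ whose image maps onto a generator of the $\mathbb{Z}_{|q|}$ factor, and set $g=[a,c]$. Being a commutator, $g\in[G(K),G(K)]$ automatically; at $s_0$ the element $p_{s_0}(g)=[\,\overline a,\overline c\,]$ is a reduced alternating word of length four in $\mathbb{Z}_{|p|}\ast\mathbb{Z}_{|q|}$, so $p_{s_0}(g)\neq1$. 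For the finitely many remaining $s\in\mathcal{S}$ I would verify $p_s(g)\neq1$ by cases: when $\partial V(K;s)$ is incompressible, $\pi_1(K(s))=G(k)\ast_{\pi_1(\partial V(K;s))}\pi_1(V(K;s))$ and $g=aca^{-1}c^{-1}$ is again a reduced length-four word, provided $a$ and the image of $c$ avoid the amalgamating subgroup; when $V(K;s)$ is a solid torus, $\pi_1(K(s))=\pi_1(k(s/q^2))$ is the fundamental group of a Seifert fibered space which is non-abelian (as $s$ is neither a finite nor a cyclic surgery), and it suffices that the images of $a$ and $c$ do not lie in a common cyclic subgroup. Because $\mathcal{S}$ is finite these reduce to finitely many genericity conditions on the pair $(a,c)$, which the freedom to modify $a$ and $c$ inside the kernel of $p_{s_0}$ lets me meet simultaneously.

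Finally, with one such $g$ in hand I would invoke Corollary~\ref{same_trivialization_residually_finite}: for arbitrary $\beta\in G(K)$ the elements $g^{g^{\beta}}g^{-2}$ again lie in $[G(K),G(K)]$ and satisfy $\mathcal{S}_K(g^{g^{\beta}}g^{-2})=\mathcal{S}_K(g)$, so they all avoid $\mathcal{S}$; mutual non-conjugacy for infinitely many choices of $\beta$ follows by putting these words into cyclically reduced normal form in the amalgamated product $G(k)\ast_P\pi_1(C)$ and applying the conjugacy criterion \cite[Theorem~4.6]{MKS_IMT}, exactly as in Proposition~\ref{persistent_[G,G]_torus}. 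The main obstacle is the middle step: forcing a single commutator to stay non-trivial at the reducing slope $s_0$, where only the cable-pattern factor $c$ is visible, and simultaneously at the ordinary slopes, where the image of $c$ under the two filling types (incompressible boundary versus solid-torus filling) must be kept off the amalgamating subgroup and away from commuting with the image of $a$. Reconciling these competing requirements is precisely what makes the choice of $(a,c)$ delicate.
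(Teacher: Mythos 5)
Your overall architecture is sound and genuinely different from the paper's: you reduce to the case where the unique reducing slope $s_0=(abq\pm1)q$ lies in $\mathcal{S}$ (that reduction, via Claim~\ref{companion} and the observation that a cyclic companion surgery would force $s$ to be a finite surgery slope of $K$, is correct), and you then try to build a single ``mixed'' commutator $g=[a,c]$ with $a\in G(k)\setminus P$ and $c\in\pi_1(C)\setminus P$ that survives the free product $\mathbb{Z}_{|p|}\ast\mathbb{Z}_{|q|}$ at $s_0$. The genuine gap is the step you yourself flag as delicate: you never produce a pair $(a,c)$ meeting all the conditions simultaneously, and in the case $|npq-m|=1$ there is a concrete obstruction that ``genericity'' does not dispose of. There the whole of $\pi_1(C)$ maps into $\pi_1(K(s))=\pi_1(k(s/q^2))$ through $\pi_1(V(K;s))\cong\mathbb{Z}$, so $p_s(c)$ is forced to be a power $\kappa^{j}$ of the core $\kappa$ of the filling solid torus, with $j$ determined by $c$ and not by $a$. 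In the Seifert fibered group $\pi_1(T_{a,b}(s/q^2))$ a suitable power of $\kappa$ equals a power of the central regular fiber; whenever $\kappa^{j}$ lands in the center, $[p_s(a),p_s(c)]=1$ for \emph{every} choice of $a$, so the commutator dies at that slope no matter how you perturb $a$. You would have to control the integer $j$ modulo the order of the relevant exceptional fiber for each $s\in\mathcal{S}$ while simultaneously keeping $p_{s_0}(c)\ne 1$ in $\mathbb{Z}_{|q|}$ and keeping the image of $c$ off the amalgamating subgroup at the incompressible slopes; ``modifying $c$ inside $\ker p_{s_0}$'' is not shown to give you that freedom. The closing non-conjugacy step (cyclically reduced forms of $g^{g^\beta}g^{-2}$ in $G(k)\ast_P\pi_1(C)$ and \cite[Theorem~4.6]{MKS_IMT}) is likewise only asserted; over the non-central amalgam $P\cong\mathbb{Z}^2$ this requires an explicit choice of the $\beta$'s and a computation, as was done with the words $w_n$ in Proposition~\ref{persistent_[G,G]_torus}.

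For comparison, the paper avoids all of this by taking $g=[\tau_a,\tau_b]$ entirely inside the companion group and considering the family $[\tau_a,\tau_b]\lambda_K^{\ell}$. At the reducing slope the commutator dies in the abelian lens space factor but $p_{s_0}(\lambda_K)^{\ell}\ne1$ survives; at every other slope $s\in\mathcal{S}$ at most one exponent $\ell_s$ can kill $[\tau_a,\tau_b]\lambda_K^{\ell}$ (using torsion-freeness of $\pi_1(k(s/q^2))$, respectively non-peripherality of $[\tau_a,\tau_b]$ in the graph manifold case), so finiteness of $\mathcal{S}$ yields all large $\ell$ at once; and mutual non-conjugacy comes for free from a homogeneous quasimorphism with $\phi(\lambda_K)>0$ (Bavard duality and $\mathrm{scl}(\lambda_K)=g(K)-1/2>0$), since $\phi([\tau_a,\tau_b]\lambda_K^{\ell})\to\infty$. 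If you want to salvage your route, the missing content is precisely an explicit $(a,c)$ with the congruence conditions on $j(c)$ verified for all $s\in\mathcal{S}$; as written, the proof is incomplete at its central step.
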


\begin{proof}
Decompose $E(K)$ as $E(k) \cup (V- \mathrm{int}N(k))$, 
where $E(k) = E(T_{a, b})$, and $V- \mathrm{int}N(k)$ is a $(p, q)$--cable space  ($p = abq \pm 1$), 
and $\partial E(k) = \partial V$.

\medskip

Let $\tau_a, \tau_b$ be exceptional fibers of $E(k) = E(T_{a, b})$ of indices $a, |b|$, respectively; 
we use the same symbol $\tau_a, \tau_b$ to denote the elements in $G(k)$ represented by these exceptional fibers. 
Note that $\tau_a, \tau_b$ generate $G(k)$. 
Let us take an element $g = [\tau_a, \tau_b] \in [G(k), G(k)] \subset [G(K), G(K)]$. 

Let us consider 
$g  \lambda_K^{\ell} = [\tau_a, \tau_b] \lambda_K^{\ell} \in [G(K), G(K)] \subset G(K)$
for an integer $\ell$. 

In what follows we show that there exist infinitely many integers $\ell > 0$ such that
$p_s(g \lambda_K^\ell) \neq 1$ for all slopes  $s \in \mathcal{S}$.

Following \cite{Go_satellite_IMT} we have:  

\[
V(K;s) = \begin{cases}
    \textrm{a boundary-irreducible Seifert fiber space}, & |npq-m| > 1, \\
    S^1 \times D^2, & |npq-m| = 1, \\
    S^1 \times D^2 \# L(q, p), & |npq-m| = 0.
  \end{cases}
\]

Accordingly we have:
\[
K(s)  = \begin{cases}
    E(k) \cup V(K; s), \textrm{which is a graph manifold}, & |npq-m| > 1, \\
    k(s \slash q^2), & |npq-m| = 1, \\
    k(p \slash q) \# L(q, p), & |npq-m| = 0.
  \end{cases}
\]

\medskip

\noindent
\textbf{Case 1}.  $|npq-m|>1$. 

Assume first that $ s = 0$. 
Then 
\[
p_0([\tau_a, \tau_b]  \lambda_K^{\ell}) 
= p_0([\tau_a, \tau_b]) \ne 1, 
\]
because $p_0([\tau_a, \tau_b])\in G(k) \subset G(k) \ast_{\pi_1(T)} \pi_1(V(K; 0)) = \pi_1(K(0))$, 
where $T = \partial E(k) = \partial V$. 

For any $0 \ne s \in \mathbb{Q}$, 
we have 
\[
p_s([\tau_a, \tau_b]) p_s( \lambda_K) ^{\ell}
\in G(k) \ast_{\pi_1(T)} \pi_1(V(K; 0)).
\]
Note that $p_s([\tau_a, \tau_b]) \ne 1 \in \pi_1(G(k))$. 
Note also that  
$p_s(\lambda_K) \ne 1$, because $s \ne 0$ and $s$ is not a finite surgery slope of $K$; see Lemma~\ref{inclusion} (\cite[Lemma~5.2]{IMT_realization_IMT}). 
Now suppose for a contradiction that 
$p_s([\tau_a, \tau_b]) p_s( \lambda_K) ^{\ell} = 1$. 
Then $p_s([\tau_a, \tau_b]) = p_s( \lambda_K) ^{-\ell}$. 
This means that  $p_s([\tau_a, \tau_b]) = p_s( \lambda_K) ^{-\ell} \in \pi_1(T)$. 
However, Claim~\ref{notP(K)_torus} shows that $p_s([\tau_a, \tau_b])$ is non-peripheral in $G(k)$, 
a contradiction. 
 
\medskip

\noindent
\textbf{Case 2}. $|npq-m|=1$. 

The assumptions $|npq-m|=1$ and $q \ge 2$ show that $s = m/n \ne 0$ and $s \ne pq$, 
in particular $K(s) = k(s/q^2)$ is irreducible. 

Let us take $s \in \mathcal{S}$. 
Assume that for some integer $\ell_s$ (depending on $s$) we have 
 \[
p_s([\tau_a, \tau_b]  \lambda_K^{\ell_s}) 
=  p_s([\tau_a, \tau_b]) p_s( \lambda_K)^{\ell_s}
= 1,\  \mathrm{i.e.}\ 
 p_s([\tau_a, \tau_b]) = p_s( \lambda_K^{-\ell_s})
\]
Now we show that there is at most one such an integer $\ell_s$ for each $s \in \mathcal{S}$. 
Suppose that  
\[
p_s([\tau_a, \tau_b]  \lambda_K^{\ell'_s}) 
=  p_s([\tau_a, \tau_b]) p_s( \lambda_K)^{\ell'_s}
= 1,\  \mathrm{i.e.}\ 
 p_s([\tau_a, \tau_b]) = p_s( \lambda_K)^{-\ell'_s}
\]
as well for an integer $\ell'_s$. 
Then we have 
\[
p_s( \lambda_K)^{\ell_s} =  p_s( \lambda_K)^{\ell'_s}, \ \mathrm{i.e.}\ p_s( \lambda_K)^{\ell_s- \ell'_s} = 1
\]
in $\pi_1(K(s)) = \pi_1(k(s/q^2))$. 
Since $s$ is neither $0$--slope nor a finite surgery slope (by the assumption), 
$p_s(\lambda_K) \ne 1$; see Lemma~\ref{inclusion} (\cite[Lemma~5.2]{IMT_realization_IMT}).
Furthermore, 
since $K(s) = k(s/q^2)$ is irreducible, 
its fundamental group is torsion free. 
Hence, $\ell_s = \ell'_s$. 
Then, since $\mathcal{S}$ is finite,  
we have a constant $N > 0$ such that 
$p_s([\tau_a, \tau_b]  \lambda_K^{\ell}) \ne 1$ in $\pi_1(K(s))$ for all $s \in \mathcal{S}$ whenever $\ell \ge N$.

\medskip

\noindent
\textbf{Case 3}. $|npq-m|=0$ i.e. $s = pq \ne 0$. 

In this case 
\[
K(pq) = k(p/q) \# L(q, p) = k((abq \pm 1)/q) \# L(q, p) 
= L(p, qb^2) \# L(q, p).
\]
Note that $\pi_1(K(pq)) = \pi_1(L(p, qb^2)) \ast \pi_1(L(q, p))$ is infinite. 
Since $p_s([\tau_a, \tau_b]) = 1 \in \pi_1(L(p, qb^2))$, 
we have $p_s([\tau_a, \tau_b] \lambda^{\ell}) = p_s([\tau_a, \tau_b]) p_s(\lambda_K)^{\ell} = p_s(\lambda_K)^{\ell}$, 
which is non-trivial, because $s \ne 0$ and $s$ is not a finite surgery slope \cite[Lemma~5.2]{IMT_realization_IMT}.

\medskip 

Finally we show that there are infinitely many, mutually non-conjugate elements 
$[\tau_a, \tau_b]\lambda_K^{\ell}  \in [G(K), G(K)]$.

Let $\phi \colon G(K) \to \mathbb{R}$ be a homogeneous quasimorphism of defect $D(\phi)$. 
Then it satisfies 
\[ D(\phi)=\sup_{g,h \in G(K)}|\phi(gh)-\phi(g)-\phi(h)| < \infty, \quad \phi(g^{k})=k\phi(g) \ (\forall g\in G(K), k \in \mathbb{Z}).\]
Note that homogeneous quasimorphism is constant on conjugacy classes \cite[2.2.3]{Cal_MSJ_IMT}. 

Following Bavard's Duality Theorem \cite{Bavard_IMT} we have 
\[ \mathrm{scl}_{G(K)}(\lambda_K)=\sup_{\phi} \frac{|\phi(\lambda_K)|}{2D(\phi)} \]
where $\phi\colon G(K) \rightarrow \mathbb{R}$ runs over all homogeneous quasimorphisms of $G(K)$ which are not homomorphisms. 
Since $\mathrm{scl}_{G(k)}(\lambda_K) = g(K) -1/2 > 0$ \cite[Proposition 4.4]{Cal_MSJ_IMT}, 
if necessary by taking $\phi' = - \phi$, 
we may take $\phi$ so that $\phi(\lambda_K) > 0$.

Then we have an inequality 
\[
D(\phi) \ge |\phi([\tau_a, \tau_b] \lambda_K^{\ell}) - \phi ([\tau_a, \tau_b]) - \phi(\lambda_K^{\ell})|, 
\]
which implies 
\[
\phi([\tau_a, \tau_b] \lambda_K^{\ell}) 
\ge \phi([\tau_a, \tau_b]) +
 \phi(\lambda_K^{\ell})  - D(\phi)
= \phi([\tau_a, \tau_b]) +
 \ell \phi(\lambda_K)  - D(\phi).
\]

Hence $\displaystyle \lim_{\ell \to \infty} \phi([\tau_a, \tau_b] \lambda_K^{\ell})  \to \infty$. 
Since a homogeneous quasimorphism $\phi$ is conjugation invariant, 
this shows that $\{[\tau_a, \tau_b] \lambda_K^{\ell} \}_{\ell \in \mathbb{Z}}$ has infinitely many, 
mutually non-conjugate elements.
\end{proof}

\medskip

\section{Non-separable pairs of finite families of slopes}
\label{example}

In this section we provide some non-separable pairs of finite families of slopes. 

\medskip

\begin{example}
\label{torus_knot_pq}
Let K be a torus knot $T_{p,q}$. Then the slope $pq$ represented by a regular fiber has a distinguished property. 
\[
\langle \! \langle pq \rangle\!\rangle \cap \langle \! \langle r \rangle\!\rangle \cong \begin{cases} \mathbb{Z} & \mbox{if}\ r \mbox{ is a finite surgery slope,} \\ \{1\} & \mbox{if $r\ne pq$ and $r$ is not a finite surgery slope.}\end{cases}
\] 
See \cite[Proposition 5.4]{IMT_Magnus_IMT}. 
Thus if $pq \in \mathcal{S}_{K}(g)$, 
then $r \not \in \mathcal{S}_{K}(g)$ for all non-finite surgery slopes $r \ne pq$. 
So for any non-finite surgery slope $r$,  there is no non-trivial element $g \in G(K)$ such that 
$\mathcal{R} = \{ pq, r \} \subset \mathcal{S}_K(g)$. 
In particular, for any $\mathcal{S} \subset \mathbb{Q}$ with $\mathcal{R} \cap \mathcal{S} = \emptyset$, 
there is no element $g \in G(K)$ such that $\mathcal{R} \subset \mathcal{S}_K(g) \subset \mathbb{Q} -  \mathcal{S}$. 
\end{example}

\medskip

In general,  
an inclusion $\langle \! \langle r \rangle\!\rangle \subset \langle \! \langle s \rangle\!\rangle$ forces us the restriction that 
 $r \in \mathcal{S}_K(g)$ implies $s \in \mathcal{S}_K(g)$.  
This gives the following examples. 

\medskip

\begin{example}
\label{torus_knot_cyclic}
Let $K$ be a torus knot $T_{p, q}$ $(p > q \ge 2)$. 
Then for each finite surgery slope $r_0 \in \mathbb{Q}$, 
\cite[Theorem~6.4]{IMT_Magnus_IMT} shows that there is an infinite descending chain 
\[
\langle \! \langle r_0 \rangle\!\rangle \supset \langle \! \langle r_1 \rangle\!\rangle \supset \langle \! \langle r_2 \rangle\!\rangle \supset \cdots.
\]
Hence 
if $g \in \langle \! \langle r_n \rangle\!\rangle$, then $g \in \langle \! \langle r_m \rangle\!\rangle$ for any pair $m, n$ with $m < n$. 
This means that there is no element $g \in G(K)$ such that 
$r_n  \in \mathcal{S}_K(g)$ and $r_m \not\in \mathcal{S}_K(g)$. 
\end{example}

\medskip

Even for hyperbolic knots, we have: 

\medskip

\begin{example}
\label{pretzel_non_separation}
Let $K$ be the $(-2, 3, 7)$--pretzel knot. 
Choose two slopes $\frac{18}{5}$ and $18$. 
Then as shown in \cite[Example 6.2]{IMT_Magnus_IMT}, 
$\langle \! \langle \frac{18}{5} \rangle\!\rangle \subset \langle \! \langle 18 \rangle\!\rangle$. 
Thus there is no element $g$ such that 
$\frac{18}{5} \in \mathcal{S}_K(g)$ and 
$18  \not\in \mathcal{S}_K(g)$. 
\end{example}

\medskip

Therefore the set $\mathcal{S}_K(g)$ is not arbitrary. 

\bigskip

\textbf{Acknowledgments} 
We would like to thank the referee for careful reading and useful comments.

\end{document}